\definecolor{Darkgreen}{rgb}{0,0.4,0}
\newtheorem{theorem}{Theorem}
\newtheorem{lemma}[theorem]{Lemma}
\newtheorem{proposition}[theorem]{Proposition}
\newtheorem{corollary}[theorem]{Corollary}
\theoremstyle{definition}
\newtheorem{remark}[theorem]{Remark}
\newtheorem*{assumptions}{Assumptions}
\newtheorem*{acknowledgements}{Acknowledgements}
\newcommand{\finitegraph}{{\mathcal{G}_n}}
\newcommand{\treegraph}{{\mathbb T_d}}
\newcommand{\treegraphplus}{{\mathbb T^+_d}}
\newcommand{\rot}{{\textup{o}}}
\newcommand{\bbone}{\boldsymbol{1}}
\newcommand{\tx}{\mathtt{tx}}
\newcommand{\Var}{\textup{Var}}
\numberwithin{equation}{section}   
\numberwithin{theorem}{section}
\begin{document}

\title{\bf \large \uppercase{Level-set percolation of the {G}aussian free
    field on regular graphs {II}: finite expanders}}

\date{Preliminary draft}

\author{Angelo Ab\"acherli
  \thanks{Departement Mathematik, ETH Z\"urich,
    R\"amistrasse 101, 8092 Z\"urich, Switzerland}
  \and Ji\v{r}\'i \v{C}ern\'y
  \thanks{Departement Mathematik und Informatik,
    University of Basel, Spiegelgasse 1, 4051 Basel, Switzerland}}

\maketitle

\begin{abstract}
  We consider the zero-average Gaussian free field on a certain class of
  finite $d$-regular graphs for fixed $d\geq 3$. This class includes $d$-regular
  expanders of large girth and typical realisations of random $d$-regular
  graphs. We show that the level set of the zero-average Gaussian free
  field above level $h$ exhibits a phase transition at level $h_\star$,
  which agrees with the critical value for level-set percolation of the
  Gaussian free field on the \emph{infinite $d$-regular tree}. More
  precisely, we show that, with probability tending to one as the size of
  the \emph{finite} graphs tends to infinity, the level set above level
  $h$ does not contain any connected component of larger than logarithmic
  size whenever $h>h_\star$, and on the contrary, whenever $h<h_\star$, a
  linear fraction of the vertices is contained in connected components of
  the level set above level $h$ having a size of at least a small
  fractional power of the total size of the graph. It remains open
  whether in the supercritical phase $h<h_\star$, as the size of the
  graphs tends to infinity, one observes the emergence of a (potentially
    unique) \emph{giant} connected component of the level set above level
  $h$. The proofs in this article make use of results from the
  accompanying paper \cite{AC1}.
\end{abstract}

\setcounter{page}{1}



\section{Introduction}

In this article we study level-set percolation of the zero-average
Gaussian free field on a class of large $d$-regular graphs with $d\geq 3$.
This class contains $d$-regular expanders of large girth and typical
realisations of random $d$-regular graphs. Through suitable local
approximations of the zero-average Gaussian free field by the Gaussian
free field on the infinite $d$-regular tree we are able to establish  a
phase transition for level-set percolation of the zero-average Gaussian
free field which occurs at the critical value for level-set percolation
in the infinite model, that is, on the $d$-regular tree.

Level-set percolation and the local picture of the zero-average Gaussian
free field have been previously studied by the first author in \cite{36}
for the situation where the underlying sequence of finite graphs is given
by the discrete tori of growing side length in dimension $d\geq 3$.  The
motivation for investigating the zero-average Gaussian free field on the
different class of finite graphs considered here (see
  \eqref{0}--\eqref{2} below) stems from the insight that analysing
probabilistic models on these types of finite graphs has led to often
very explicit and strong results over the years. Examples include the
emergence of a giant connected component for Bernoulli bond percolation
(see e.g.~\cite{38} and recently \cite{46}), cutoff phenomena for random
walks (see e.g.~\cite{39}) and the appearance of a giant connected
component in the vacant set of simple random walk (see e.g.~\cite{21}).
Actually, we will borrow the assumptions  \eqref{0}--\eqref{2} on the
finite graphs from \cite{21}.

From a more general perspective, level-set percolation of the Gaussian
free field is a significant representative of a percolation model with
long-range dependencies and it has attracted attention for a long time,
dating back to \cite{31}, \cite{28} and \cite{12}. More recent
developments can be found for instance in \cite{7}, \cite{13}, \cite{22},
\cite{40} and \cite{DPR2}. For the particular case of the Gaussian free
field on regular trees we also refer to \cite{35}, \cite{SzniECP} and
\cite{AC1}; for more general transient trees to \cite{37}.

We now describe our results more precisely. We let $d\geq 3$ and assume
that $(\finitegraph)_{n\geq 1}$ is a sequence of graphs satisfying the
following conditions.
\begin{assumptions}
  There exist some $\alpha, \beta >0$ and an increasing sequence of
  positive integers $(N_n)_{n\geq 1}$ with
  $N_n \xrightarrow{n\to \infty} \infty$ such that for all $n\geq 1$
  \begin{flalign}
    \quad \ \,  \bullet& \ \, \finitegraph
    \text{ is $d$-regular, connected and has
      $N_n$ vertices} &
    \label{0}   \\
    \quad \ \, \bullet& \ \, \text{for all $x \in \finitegraph$ there is at most
      one cycle in the ball of radius $\lfloor \alpha \log_{d-1}(N_n) \rfloor$} &
    \nonumber \\
    \quad \ \,  &\ \,  \text{around $x$} &
    \label{1}  \\
    \quad \ \, \bullet& \ \, \text{the spectral gap  of $\finitegraph$, denoted by
      $\lambda_\finitegraph$, satisfies  $\lambda_\finitegraph \geq
      \beta$.}  &
    \label{2}
  \end{flalign}
\end{assumptions}
\noindent Here by spectral gap we mean  the smallest non-zero eigenvalue
of $I-P$, where $I$ is the identity matrix and $P$ is the transition
matrix of the simple random walk on the graph (see also \cite{44},
  Definition 2.1.3 and beneath it). For an explanation of why these
assumptions are satisfied by $d$-regular expanders of large girth and by
typical realisations of random $d$-regular graphs we refer to \cite{21},
Section 2.2 and Remark 1.4.

On $\finitegraph$ we consider the zero-average Gaussian free field (see
  Section \ref{subsection1.2} for more details about it) with law
$\mathbb P^\finitegraph$ on $\mathbb R^\finitegraph$ and canonical
coordinate process $(\Psi_\finitegraph(x))_{x \in \finitegraph}$ so that,
\begin{equation}
  \label{0.1}
  \parbox{12cm}{
    under $\mathbb P^\finitegraph$, $(\Psi_\finitegraph(x))_{x \in \finitegraph}$
    is a  centred Gaussian field on $\finitegraph$ with covariance $\mathbb
    E^\finitegraph [\Psi_\finitegraph(x) \Psi_\finitegraph(y)] =
    G_\finitegraph(x,y)$ for all $x,y \in \finitegraph$, where
    $G_\finitegraph(\cdot,\cdot)$ is the zero-average  Green function on
    $\finitegraph$ (see \eqref{1.5}).
  }
\end{equation}
The zero-average Gaussian free field is a natural version of the Gaussian
free field for finite graphs. However, due to the zero-average property
(see below \eqref{1.7}), it comes with some peculiarities like the
\emph{lack} of an FKG-inequality and of the domain Markov property.

Our main interest lies in analysing the size (i.e.~the number of
  contained vertices) of the  connected components of the level sets of
$\Psi_\finitegraph$, i.e.~of
\begin{equation}
  \label{0001}
  E_{\Psi_\finitegraph}^{\geq h}
  \coloneqq \{x\in \finitegraph \, | \,  \Psi_\finitegraph(x)\geq h\}
  \text{ for } h\in \mathbb R.
\end{equation}

In order to do so, it will be helpful to locally describe
$\Psi_\finitegraph$  via the Gaussian free field on  the infinite $d$-regular
tree $\treegraph$ with root denoted by $\rot$, that is, the centred
Gaussian field on $\treegraph$ with law $\mathbb P^\treegraph$ on
$\mathbb R^\treegraph$ and canonical coordinate process
$(\varphi_\treegraph(x))_{x \in \treegraph}$ so that,
\begin{equation}
  \label{0.2}
  \parbox{12cm}{
    under $\mathbb P^\treegraph$, $(\varphi_\treegraph(x))_{x \in \treegraph}$ is a
    centred Gaussian field on $\treegraph$ with covariance $\mathbb E^\treegraph
    [\varphi_\treegraph(x) \varphi_\treegraph(y)] = g_\treegraph(x,y)$ for all $x,y
    \in \treegraph$, where $g_\treegraph(\cdot,\cdot)$ is the Green function of
    simple random walk on $\treegraph$ (see \eqref{1.1}).
  }
\end{equation}
The Gaussian free field on $\treegraph$ has first been studied in
\cite{35}. Recently, more refined results have been obtained by the
authors in the accompanying paper \cite{AC1}. These results lay the
groundwork for the present article and they will be central in our
analysis of the zero-average Gaussian free field on the graphs
$(\finitegraph)_{n\geq 1}$. For now, we only recall the critical value of
level-set percolation of $\varphi_\treegraph$, that is,
\begin{equation}
  \label{0.5}
  h_\star \coloneqq \inf \Big \{ h\in \mathbb R \, \Big| \,   \mathbb
    P^\treegraph \big[ \big| \mathcal C_{\rot}^{\treegraph,h} \big| = \infty
      \big] = 0 \Big \}, \vspace{-0.2cm}
\end{equation}
where $\mathcal C_{\rot}^{\treegraph,h}$ is the connected component
of the level set
$E_{\varphi_\treegraph}^{\geq h}
\coloneqq \{x\in \treegraph \, | \, \varphi_\treegraph(x)\geq h\}$
of $\varphi_\treegraph$ above level $h$ containing the root
$\rot \in \treegraph$. There is a crucial spectral characterisation
of $h_\star$ derived in \cite{35}, which leads to the proof of
$0< h_\star < \infty$ on $\treegraph$ for $d\geq 3$ (see \cite{35},
  Proposition 3.3 and Corollary 4.5). Actually, in the accompanying paper
\cite{AC1} we make heavy use of this characterisation to obtain new
results about $\varphi_\treegraph$ on $\treegraph$.

Our main results concerning the size of the   connected components of the
level sets of $\Psi_\finitegraph$ on the finite graphs
$(\finitegraph)_{n\geq 1}$ satisfying \eqref{0}--\eqref{2} are the
following: we show in essence that (see Section \ref{subcriticalsection},
  Theorem~\ref{microscopiccomponents}, for the precise statement)
\begin{equation}
  \label{0.6}
  \parbox{12cm}{
    in the subcritical phase $h>h_\star$,  with high probability for
    large $n$, the level set $E_{\Psi_\finitegraph}^{\geq h}$ of
    $\Psi_\finitegraph$ only contains microscopic connected components
    (i.e.~containing at most a logarithmic number of vertices of
      $\finitegraph$);
  }
\end{equation}
and furthermore that (see Section \ref{supercriticalsection}, Theorem
  \ref{manymesoscopiccomponents}, for the precise statement)
\begin{equation}
  \label{0.7}
  \parbox{12cm}{
    in the supercritical phase $h<h_\star$,  with high probability for large $n$, a
    linear fraction of the vertices of $\finitegraph$ is contained in at least
    mesoscopic connected components of the level set $E_{\Psi_\finitegraph}^{\geq
      h}$ of $\Psi_\finitegraph$ (i.e.~containing a fractional power of the number of
      vertices of $\finitegraph$).
  }
\end{equation}

Although giving a strong hint to, the result \eqref{0.7} leaves open
whether in the supercritical phase $h<h_\star$, with high probability for
large $n$, there actually is a macroscopic (\emph{giant}) connected
component in the level set above level $h$, i.e.~containing a  number of
vertices comparable to $\finitegraph$. Furthermore, in the affirmative,
one could ask if this giant component is unique, that is, if the
second-largest connected component of the level set above level
$h<h_\star$ only contains a negligible number of vertices compared to
$\finitegraph$ (see also Remark \ref{openquestions}).

As a comparison, the emergence of a unique giant connected component in
the supercritical phase has been shown for Bernoulli bond percolation on
$d$-regular expanders of large girth in \cite{38}  (see also \cite{46})
and for vacant-set percolation of simple random walk on exactly the same
graphs $(\mathcal G_n)_{n\geq 1}$ like here in \cite{21}. In the latter,
this result is achieved by relating the model to vacant-set percolation
of random interlacements on $\treegraph$. Subsequently, more refined
results have been obtained about the vacant set of simple random walk on
random regular graphs in \cite{CoopFri} and \cite{42}.

In the models mentioned above, the assertion of existence and uniqueness
of a giant component in the supercritical phase is achieved by a
`sprinkling argument' starting from a statement like \eqref{0.7}. In our
situation, it would correspond to showing that distinct mesoscopic
connected components of $E_{\Psi_\finitegraph}^{\geq h}$ for a
supercritical level $h<h_\star$ are going to be connected at a slightly
smaller level $h'<h$ with high probability, thus forming large clusters.
As \cite{21} shows, it can be very involved to carry out sprinkling
arguments in the non-i.i.d.~setting. At present we have not been able to
do it in our context, one of the main restrictions stemming from the
defining zero-average property of the fields we are considering (see
  below \eqref{1.7}). We point out that sprinkling techniques have been
already  applied  in the discussion of level-set percolation of the
Gaussian free field in \cite{DreRod} to construct an infinite connected
component with the underlying graph being $\mathbb Z^d$ for high
dimension $d$.

Let us now comment on the proofs of Theorem~\ref{microscopiccomponents}
and Theorem~\ref{manymesoscopiccomponents} (corresponding to \eqref{0.6}
  and \eqref{0.7}). In both cases, the general philosophy is to locally
approximate $\Psi_\finitegraph$ on the finite graphs by
$\varphi_\treegraph$ on the $d$-regular tree and by that reduce the
analysis to the infinite model, which is easier to understand. A similar
strategy has been successfully carried out in \cite{38} and \cite{21}
where the connected components in question are locally approximated by
Galton-Watson trees. In our setting the situation is considerably more
complicated since neither the connected components of the level sets of
$\Psi_\finitegraph$ nor the connected components of the level sets of
$\varphi_\treegraph$ (used in the approximation) are locally
Galton-Watson trees, even if the connected components of
$E_{\varphi_\treegraph}^{\geq h}$ share some global properties with them,
as shown in \cite{AC1}. The exact way how the local approximation by
$\varphi_\treegraph$ is performed differs considerably between the
subcritical and supercritical phase.

In the supercritical phase $h<h_\star$, we use an approximation of
$\Psi_\finitegraph$ by $\varphi_\treegraph$ via local charts around
vertices of $\finitegraph$ with a tree-like neighbourhood
(Theorem~\ref{extendedtheorem2.2}). Then the proof of
Theorem~\ref{manymesoscopiccomponents} (corresponding to \eqref{0.7}) is,
roughly said,  a second moment computation based on this local
approximation and involving a good control of the supercritical level
sets of $\varphi_\treegraph$, obtained in the accompanying paper \cite{AC1}.

More precisely, to show \eqref{0.7} we prove that the number of vertices
contained in mesoscopic connected components of the level set
$E_{\Psi_\finitegraph}^{\geq h}$ concentrates around its expectation,
which we show to grow linearly in the total number of vertices. The
concentration follows by a variance computation and a second moment
inequality. Actually, when estimating the expectation and variance, it is
enough to consider only vertices with a tree-like neighbourhood since the
assumption \eqref{1} (together with \eqref{0}) guarantees that  the
number of vertices having a tree-like neighbourhood is comparable to the
total number of vertices in $\finitegraph$ (Remark \ref{842}). Thanks to
the approximation of $\Psi_\finitegraph$ by $\varphi_\treegraph$ around
such vertices (Theorem \ref{extendedtheorem2.2} mentioned above), we are
able to transfer the computations to the regular tree. The linear lower
bound on the expectation (\eqref{224} in Lemma
  \ref{expectationcomputation}) now follows rather direct from this
approximation and from \cite{AC1}, Theorem 4.3, showing that connected
components of the level sets of $\varphi_\treegraph$ are mesoscopic with
positive probability in the supercritical phase. The control of the
variance follows along similar lines (Lemma~\ref{variancecomputation}).
It requires the approximation of $\Psi_\treegraph$ by $\varphi_\treegraph$
on neighbourhoods of vertices with a tree-like \emph{and} disjoint
neighbourhood. This is provided by Theorem \ref{extendedtheorem2.2} as
well. Once we have reduced the computations to quantities for
$\varphi_\treegraph$ on $\treegraph$, we can apply a decoupling
inequality (\cite{13}, Corollary 1.3) and deduce the  bound on the
variance again from results on $\varphi_\treegraph$ developed in the
accompanying paper \cite{AC1}.

For the subcritical phase $h>h_\star$ (Theorem
  \ref{microscopiccomponents} corresponding to \eqref{0.6}) the local
approximation of $\Psi_\finitegraph$ by $\varphi_\treegraph$ around
vertices with tree-like neighbourhood is not good enough. On the one
hand, the connected components of $E_{\Psi_\finitegraph}^{\geq h}$ may
have a diameter that is larger than the diameter of those neighbourhoods
(at least if $h$ is close to $h_\star$). On the other hand, one expects
that the connected components are typically `thin'. These two points of
`thinness'  and of `escaping the local charts' suggest that the
approximation of $\Psi_\finitegraph$ by $\varphi_\treegraph$ should
rather be carried out \emph{along} the connected components of
$E_{\Psi_\finitegraph}^{\geq h}$. We achieve this by employing an
exploration process uncovering the connected component  of the level set
containing a given vertex (Algorithm~\ref{algorithm} in
  Section~\ref{subcriticalsection}). Roughly said, by exploring
$\Psi_\finitegraph$ vertex by vertex we are able to couple it  vertex by
vertex to a number of independent copies of $\varphi_\treegraph$ on
$\treegraph$, hence bringing back the problem to the tree. Results from
\cite{AC1}  on $\varphi_\treegraph$ in the subcritical phase then
conclude the proof.

More precisely, the exploration process aggregates the vertices found in
the connected component of $E_{\Psi_\finitegraph}^{\geq h}$ containing a
fixed $x \in \finitegraph$ into a union of disjoint subtrees of
$\finitegraph$. The decomposition into a union of disjoint subtrees is
determined \emph{during} the exploration and it  is dictated by the
\emph{geometric} properties of the graph $\finitegraph$ and of the
evolving set of explored vertices. These geometric conditions guarantee
that for each of the disjoint subtrees we can approximate the
zero-average Gaussian free field $\Psi_\finitegraph$ on the subtree by an
independent copy of the Gaussian free field $\varphi_\treegraph$ on
$\treegraph$ (Lemma \ref{806}). In order to do so, it is crucial to have
a good understanding of the conditional distribution of the zero-average
Gaussian free field (Lemma \ref{conditionaldistribution} and Proposition
  \ref{535}). As a consequence, the size of each disjoint subtree of
$\finitegraph$ constructed by the exploration process is dominated by the
size of the connected component containing the root
$\rot\in \treegraph$ of the level set of $\varphi_\treegraph$
above a slightly lower level $h-\varepsilon$ (Corollary \ref{841}). The
last two ingredients for the proof of \eqref{0.6} are now a control on
the number of disjoint subtrees (Lemma \ref{532}, already proven in
  \cite{21}) and a control on the exponential moments of the size of the
connected component of the level set of $\varphi_\treegraph$ containing
the root $\rot\in \treegraph$ in the subcritical phase (see
  \cite{AC1}, Theorem 5.1).

Incidentally, let us point out that exploration processes are frequently
used in the Bernoulli percolation literature and actually, a variant of
such an algorithm was applied in \cite{21} to deal with the vacant set of
simple random walk in the subcritical phase. However, in our setting we
cannot follow the `standard' procedure. Usually, to show statements like
\eqref{0.6}, a good control on the termination time of the exploration
process is necessary, i.e.~on the time by when the connected component is
completely uncovered. This is typically done by comparing the number of
yet unexplored vertices to a random walk of negative drift. In our case
this is not possible, essentially again   because locally the connected
components of $E_{\Psi_\finitegraph}^{\geq h}$ are not approximated by
Galton-Watson trees (as mentioned earlier).

The structure of the article is as follows. In Section \ref{section1} we
collect the notation and some results on the Gaussian free fields on both
the finite graphs and the infinite tree. In particular, in Section
\ref{sectiontree} we recall results on $\varphi_\treegraph$ from
\cite{35} and \cite{AC1}. Then in Section \ref{localpicturesection} we
investigate the local picture of the zero-average Gaussian free field on
$\finitegraph$ and its connection to the Gaussian free field on
$\treegraph$. The content of these first two sections will be
subsequently used to show Theorem~\ref{microscopiccomponents}
(corresponding to \eqref{0.6}) and Theorem~\ref{manymesoscopiccomponents}
(corresponding to \eqref{0.7}). More precisely, in Section
\ref{subcriticalsection} we deal with the subcritical phase, ultimately
proving the non-existence of  connected components of
$E_{\Psi_\finitegraph}^{\geq h}$ for $h>h_\star$ of larger than
logarithmic size (Theorem \ref{microscopiccomponents}). Finally,  in
Section \ref{supercriticalsection} we conclude with the proof of Theorem
\ref{manymesoscopiccomponents} showing that  for $h<h_\star$ most
vertices of $\finitegraph$ live in a connected component of
$E_{\Psi_\finitegraph}^{\geq h}$ of at least mesoscopic size.

\begin{acknowledgements}
  The authors wish to express their gratitude to A.-S.~Sznitman for
  suggesting the problem and for the valuable comments made at various
  stages of the project.
\end{acknowledgements}



\section{Notation and useful results}
\label{section1}

In this section we introduce our main notation and recall the essential
material about the Gaussian free field on the $d$-regular tree
$\treegraph$ that will be needed in the study of the zero-average
Gaussian free field on the finite graphs $(\finitegraph)_{n\geq 1}$
(Section \ref{sectiontree}). We end the section with results on the
zero-average Green function and some basic properties of the zero-average
Gaussian free field on $\finitegraph$ (Section \ref{subsection1.2}).

As mentioned earlier, we consider for fixed $d\geq 3$ the $d$-regular
graphs $(\finitegraph)_{n\geq 1}$, satisfying the assumptions
\eqref{0}--\eqref{2}. For the constants $\alpha$ and $\beta$ appearing in
these assumptions we assume without loss of generality that
\begin{equation}
  \label{wlog}
  \alpha \leq 1 \qquad \text{and} \qquad \beta \leq 2.
\end{equation}
Indeed, for $\alpha$ this is trivial and for $\beta$ it follows from the
fact that the  matrix $P$ (see below \eqref{2})  is a symmetric
stochastic matrix and thus all its eigenvalues are contained in the
interval $[-1,1]$. Consequently the eigenvalues of $I-P$ are contained in
$[0,2]$.

For the general graph notation introduced in the next two paragraphs,
$\mathcal G$ stands either for $\finitegraph$ or for $\treegraph$ with
root $\rot$.

By $x\in \mathcal G$ resp.~$U\subseteq \mathcal G$ we mean a vertex
resp.~a subset of vertices of the graph $\mathcal G$. We let
$d_\mathcal G(\cdot,\cdot)$ denote the graph distance on $\mathcal G$.
For any  $U\subseteq \mathcal G$,  $|U|$ stands for its cardinality, and
$\partial_\mathcal G U \coloneqq \{ y \in \mathcal G \setminus U \,
  | \,  y \text{ has some neighbour } x\in U \text{ in } \mathcal G \}$
denotes its (outer) boundary in $\mathcal G$. For any $R\geq0 $ and
$x \in \mathcal G$ we define the balls and spheres of radius $R $ around
$x$ to be
$B_\mathcal G(x,R)
\coloneqq \{y \in \mathcal G \, | \, d_\mathcal G(x,y) \leq R \} $
and
$S_\mathcal G(x,R)
\coloneqq \{y \in \mathcal G \, | \, d_\mathcal G(x,y) = R \} $.
The maximum number of edges that can be deleted from the subgraph of
$\mathcal G$ induced by some connected subset $U \subseteq \mathcal G $
while keeping it connected is called tree excess of $U $ and we denote it
by $\tx(U)$. Note that $\tx(U)=0 $ if and only if (the subgraph induced
  by) $U $ is a tree. (In particular, the assumption \eqref{1} could be
  rewritten as
  $\tx(B_\finitegraph(x,\lfloor \alpha \log_{d-1}(N_n) \rfloor )) \leq 1$
  for all $n\geq 1$ and $x \in \finitegraph$.) For $x,z \in \mathcal G$ a
path from $x$ to $z $ is a sequence of vertices $x=y_0,y_1,\ldots,y_m=z$
in $\mathcal G $ for some $m\geq 0$ such that $y_i$ and $y_{i-1} $ are
neighbours for all $i=1,\ldots,m$ (if $m\geq 1 $). It is a
\emph{non-backtracking} path from $x$ to $z $ if in addition
$y_i \neq y_{i-2}$ for all $i=2,\ldots,m$ (if $m\geq 2$).

We write $P_x^{\mathcal G}$ for the canonical law of the simple random
walk on $\mathcal G$ starting at $x \in \mathcal G$ as well as
$E_x^{\mathcal G}$ for the corresponding expectation. The canonical
process for the discrete-time walk is denoted by $(X_k)_{k\geq 0}$. For
the continuous-time walk with i.i.d.~mean-one exponential holding times
we write $(\overline{X}_t)_{t\geq 0}$. Given $U\subseteq \mathcal G$ we
write $T_U \coloneqq \inf \{ {k\geq 0} \, | \, {X_k  \notin U}\}$ for the
exit time from $U$ and
$H_U \coloneqq \inf \{ {k\geq 0} \, | \, X_k \in U \}$ for the entrance
time in $U$ of the discrete-time walk (here we set
  $\inf \emptyset \coloneqq \infty$). For the continuous-time simple
random walk $T_U$ and $H_U$ are defined accordingly. In the special case
of $U=\{z \}$ we use $H_z$ in place of $H_{\{z\}}$.

For $\mathcal G= \treegraph$ we need some extra notation. In this case,
there is a unique non-backtracking path of length $d_\treegraph(x,z)$
between any two vertices $x,z \in \treegraph$ (namely the \emph{geodesic}
  path). For $x\in \treegraph \setminus \{\rot\}$ let $\overline x$
be the unique neighbour of $x$ on the non-backtracking path from $x$ to
$\rot$. Moreover, let $\overline{\rot} \in \treegraph$ denote
a fixed neighbour of the root $\rot \in \treegraph$. For
$x\in \treegraph$ we define
\begin{equation}
  \label{830}
  U_x  \coloneqq   \{  z \in \treegraph \,
    | \, \text{the non-backtracking path from
      $z$ to $x$ does not contain $\overline x$} \}.
\end{equation}
In particular $\treegraph = \{\rot \} \cup \bigcup_{i=1}^d U_{x_i}$
if $S_\treegraph(\rot,1) \eqqcolon \{x_1,\ldots,x_d\}$. In the
special case of $x = \rot$ we  write
$\treegraphplus \coloneqq U_\rot$. We also set
$B_\treegraph^+(\rot,R)
\coloneqq \{y \in \treegraphplus \, | \, d_\treegraph(\rot,y) \leq R \}$
and similarly
$S_\treegraph^+(\rot,R)
\coloneqq \{y \in \treegraphplus \, | \, d_\treegraph(\rot,y) = R \}$
for $R\geq 0$.

Finally, some notation for the finite graphs $(\finitegraph)_{n\geq 1}$.
For all $n\geq 1$ and $x\in \finitegraph$ we fix a cover tree $\pi_{n,x}$
of $\finitegraph$ at $x$, that is, a surjective map
$\pi_{n,x}: \treegraph \to \finitegraph$ such that
$\pi_{n,x} (\rot) = x$ and such that for all $y \in \treegraph$ one
has $\pi_{n,x}(S_\treegraph(y,1)) = S_\finitegraph(\pi_{n,x}(y),1)$,
meaning that $\pi_{n,x}$ preserves the neighbourhood of radius 1  of any
$y \in \treegraph$. Note that:
\begin{flalign}
  \quad \ \,  \bullet& \ \, \text{if $x \in \finitegraph$ with
    $\tx(B_\finitegraph(x,R))=0$ for some $R \geq 0$, then the map
    $\pi_{n,x}$ restricted} & \nonumber \\
  \quad \ \,  & \ \,   \text{to $B_\treegraph(\rot,R)$ induces a graph
    isomorphism from $B_\treegraph(\rot,R)$ to $B_\finitegraph(x,R)$} &
  \label{20} \\
  \quad \ \, \bullet& \ \, \text{a sequence of vertices  $\rot=y_0,
    y_1,\ldots, y_m \in \treegraph$, $m\geq 0$, is a non-backtracking}
  & \nonumber  \\
  \quad \ \,  & \ \, \text{path in $\treegraph$ starting at $\rot$ if and only if
    $x=\pi_{n,x}(y_0),\pi_{n,x}(y_1),\ldots,\pi_{n,x}(y_m)\in \finitegraph$} &
  \nonumber \\
  \quad \ \,  & \ \, \text{is a non-backtracking path in $\finitegraph$ starting at $x$.} &
  \label{21}
\end{flalign}
Furthermore, for the cover tree $\pi_{n,x}$ of $\finitegraph$ at $x$, the
process $(\pi_{n,x}( X_k))_{k\geq 0}$ under $P_\rot^\treegraph$ has
the same law as $(X_k)_{k\geq 0}$ under $P_x^\finitegraph$. Hence
\begin{equation}
  \label{4}
  P_x^\finitegraph[X_k \in U] = P_\rot^\treegraph[\pi_{n,x}(X_k) \in U]=
  P_\rot^\treegraph[X_k \in \pi_{n,x}^{-1}(U)] \quad \text{for } U
  \subseteq \finitegraph, \, k\geq 0.
\end{equation}

A final word on the convention followed concerning constants: by
$c,c',\ldots$ we denote  positive constants with values changing from
place to place and which only depend on the dimension $d$ and the
constants $\alpha$ and $\beta$ from the assumptions \eqref{0}--\eqref{2}.
Numbered constants $c_0,c_1,\ldots$ are defined in the place of first
occurrence and thereafter remain fixed. The dependence of constants on
additional parameters  appears in the notation.



\subsection{Some properties of the Gaussian free field on regular trees}
\label{sectiontree}

In this  section we recall basic facts related to the Green function and
the Gaussian free field on $\treegraph$. We also restate a couple of
results about $\varphi_\treegraph$ that were  derived by the authors in
the accompanying paper \cite{AC1} and that will be used in several
occasions throughout the rest of this article.

The Green function $g_\treegraph(\cdot,\cdot)$ of simple random walk on
$\treegraph$ is (see \cite{33}, Lemma 1.24, for the explicit computation)
\begin{equation}
  \label{1.1}
  g_\treegraph(x,y) \coloneqq E_x^\treegraph \Big[\sum_{k=0}^\infty
    \bbone_{\{X_k=y\}} \Big] =
  \frac{d-1}{d-2} \Big( \frac1{d-1} \Big)^{d_\treegraph(x,y)} \ \ \text{for } x,y
  \in \treegraph.
\end{equation}
For $U\subseteq \treegraph$ the Green function $g_\treegraph^U(\cdot,\cdot)$
of simple random walk on $\treegraph$ killed when exiting $U$ is
$g_\treegraph^U(x,y)
\coloneqq E_x^\treegraph \big[\sum_{0\leq k < T_U} \bbone_{\{X_k=y\}} \big]$.
The functions
$g_\treegraph(\cdot,\cdot)$ and $g_\treegraph^U(\cdot,\cdot)$ are related by
the identity
\begin{equation}
  \label{1.4}
  g_\treegraph(x,y) = g_\treegraph^U(x,y) + E_x^\treegraph
  \big[g_\treegraph(X_{T_U},y) \bbone_{\{T_U < \infty\}}\big] \quad \text{for
  } x,y \in \treegraph.
\end{equation}

We continue by collecting known results and properties of
$\varphi_\treegraph$. Recall from \eqref{0.2} that
$(\varphi_\treegraph(x))_{x \in \treegraph}$ is the centred Gaussian
field with covariance given by $g_\treegraph(\cdot,\cdot)$. An important
feature of the Gaussian free field is the domain Markov property: for
$U\subseteq \treegraph$ let $(\varphi_\treegraph^U(x))_{x\in \treegraph}$
be a new field defined by
\begin{equation*}
  \varphi_\treegraph^U(x) \coloneqq
  \varphi_\treegraph(x) - E_x^\treegraph[ \varphi_\treegraph(X_{T_U})
    \bbone_{\{T_U < \infty\}} ] \quad \text{for } x\in \treegraph.
\end{equation*}
Then,
\begin{equation}
  \label{1.18}
  \parbox{12.3cm}{
    under $\mathbb P^\treegraph$,
    $(\varphi^U_\treegraph(x))_{x \in \treegraph}$ is a
    centred Gaussian field on $\treegraph$
    which is independent from
    $(\varphi_\treegraph(x))_{x\in \treegraph \setminus U}$ and has
    covariance
    $\mathbb E^\treegraph [\varphi_\treegraph^U(x) \varphi_\treegraph^U(y)]
    = g_\treegraph^U(x,y)$ for all $x,y \in \treegraph$.
  }
\end{equation}

As a consequence of \eqref{1.18}, the Gaussian free field on $\treegraph$
can be obtained by the following recursive construction (explained in
  detail in \cite{AC1}, Section 1.1). Let $(Y_x)_{x\in\treegraph}$ be a
collection of independent centred Gaussian variables defined on some
auxiliary probability space $(\Omega ,\mathcal A, \mathbb P)$ such that
$Y_\rot \sim \mathcal N(0,g_\treegraph(\rot,\rot)) =  \mathcal N(0,\frac{d-1}{d-2})$
and
$Y_x\sim \mathcal N(0,g_\treegraph^{U_x}(x,x)) = \mathcal N(0,  \frac{d}{d-1})$
for $x\neq \rot$. Define recursively
\begin{equation}
  \label{2544}
  \widetilde \varphi (\rot) \coloneqq Y_\rot \quad \text{ and }
  \quad \widetilde \varphi(x) \coloneqq \frac 1 {d-1} \widetilde \varphi (\overline x)
  + Y_x \quad\text{for }x \in \treegraph \setminus \{ \rot \}.
\end{equation}
Then,
\begin{equation}
  \label{2545}
  \text{under $\mathbb P$, the law of $(\widetilde \varphi (x))_{x\in \treegraph}$
    is $\mathbb P^\treegraph$,}
\end{equation}
so that \eqref{2544} can be used as an alternative description of
$(\varphi_\treegraph(x))_{x\in \treegraph}$. In particular, it gives  a
representation of the conditional distribution of $\varphi_\treegraph$
given $\varphi_\treegraph(\rot)=a\in \mathbb R$,
\begin{equation}
  \label{e:defPa}
  \mathbb P_a^\treegraph
  \big[ (\varphi_\treegraph(y))_{y\in \treegraph} \in \cdot \,  \big]
  \coloneqq \mathbb P^\treegraph \big[
    (\varphi_\treegraph(y))_{y\in \treegraph} \in \cdot \ \big| \,
    \varphi_\treegraph(\rot)=a \big],
\end{equation}
with corresponding expectation $\mathbb E_a^\treegraph$.

We turn to known results about level-set percolation of the Gaussian free
field on $\mathbb T^d$ from \cite{35} and \cite{AC1}. First, there is a
characterisation of the critical value $h_\star$ through eigenvalues
$(\lambda_h)_{h\in \mathbb R}$ of certain self-adjoint operators
$(L_h)_{h\in \mathbb R}$ (see \cite{35}, Section~3, summarised in
  \cite{AC1}, Proposition 1.1). Important for us will be that (see
  \cite{35}, Proposition 3.3)
\begin{equation}
  \label{19}
  \parbox{12cm}{
    the map $h \mapsto \lambda_h$ is a decreasing homeomorphism from $\mathbb R$ to
    $(0,d-1)$ and $h_\star$  is the unique value in $\mathbb R$ such that
    $\lambda_{h_\star} = 1$.
  }
\end{equation}
To restate the other results we remind that
$\mathcal C_{\rot}^{\treegraph,h}$ denotes the  connected component
of the level set $E_{\varphi_\treegraph}^{\geq h}$ above level $h$
containing the root $\rot \in \treegraph$ (see below \eqref{0.5}).
The second result says that (see \cite{AC1}, Theorem 4.1)
\begin{equation}
  \label{2512}
  \parbox{12cm}{
    the `forward percolation probability'  $h \mapsto \eta^+(h)$ given by
    $\eta^+(h) \coloneqq  \mathbb P^\treegraph \big[\big| \mathcal
      C_{\rot}^{\treegraph,h} \cap \treegraphplus \big| = \infty \big]$  is
    continuous and positive on $(-\infty,h_\star)$ and vanishes on
    $(h_\star,\infty)$.
  }
\end{equation}
The third result  controls the subcritical behaviour
(see \cite{AC1}, Theorem 5.1). It shows  that
\begin{equation}
  \label{2513}
  \parbox{12cm}{
    for $h > h_\star$ there exists $\delta_h>0$ such that $g_{h}(a) \coloneqq
    \mathbb E_a^\treegraph \big[ (1+\delta_h)^{| \mathcal
        C_{\rot}^{\treegraph,h}\cap \treegraphplus   |} \big]$ defines a finite function,
    continuous on $[h,\infty)$. Furthermore, $g_{h}(a) = (1+\delta_h)
    \mathbb E^Y \big[ g_{h} (\tfrac{a}{d-1} + Y ) \big]^{d-1}$ for all $a\geq h$,
    where $Y \sim \mathcal N(0,\tfrac{d}{d-1})$ and  $\mathbb E^Y$ is taken with
    respect to $Y$.
    Moreover, there exist $c_{h}, c_{h}'>0$ such
    that
    $g_{h}(a) \leq c_{h,\gamma} \exp(c_{h}' a^{3/2})$ for all $a\geq h$.
  }
\end{equation}
Finally, the last result about $\varphi_\treegraph$ needed in the sequel
in the supercritical regime is the following fact in which the $\lambda_h$,
$h\in \mathbb R$, from \eqref{19} appear: by \cite{AC1}, Theorem 4.3,
\begin{equation}
  \label{2141}
  \text{for $h<h_\star$ it holds that } \lim_{k\to \infty}  \mathbb P^\treegraph
  \Big[  \big|\mathcal C_{\rot}^{\treegraph,h} \cap
    S_\treegraph^+(\rot,k) \big| \geq \frac{\lambda_{h}^{k}}{k^2} \Big] =
  \eta^+(h) >0.
\end{equation}



\subsection{The Green function and the zero-average Gaussian free field on
  \texorpdfstring{$\finitegraph$}{finite graphs}}
\label{subsection1.2}

We now introduce the zero-average Green function associated to the
simple random walk on $\finitegraph$ and prove an upper bound on it
(Proposition \ref{proposition1.1}). Along the way we also remind of a
basic property of the zero-average Gaussian free field on $\finitegraph$
of similar type as \eqref{1.18} (see \eqref{450} and \eqref{460}).

The zero-average Green function $G_\finitegraph(\cdot,\cdot)$ associated
with the simple random walk on $\finitegraph$ is given by
\begin{equation}
  \label{1.5}
  G_\finitegraph(x,y) \coloneqq  \int_0^\infty \Big(
    P_x^\finitegraph[\overline{X}_t = y] - \frac1{N_n} \Big) \, dt \quad \text{for
  } x,y \in \finitegraph.
\end{equation}
It is symmetric, finite and positive-semidefinite, i.e.~for any
$f: \finitegraph \to \mathbb R$ one has
$\sum_{x,y \in \finitegraph} f(x) G_\finitegraph(x,y) f(y) \geq 0$ (see
  \cite{36}, Remark 1.2). For $U\subseteq \finitegraph$ we define
$g_\finitegraph^U(\cdot,\cdot)$ to be the Green function of simple random
walk on $\finitegraph$ killed when exiting $U$, that is,
\begin{equation}
  \label{1.6}
  g_\finitegraph^U(x,y) \coloneqq E_x^\finitegraph \Big[\sum_{0\leq k < T_U}
    \bbone_{\{X_k=y\}} \Big] = \sum_{k=0}^\infty P_x^\finitegraph[X_k=y, k<T_U]
  \quad \text{for } x,y \in \finitegraph.
\end{equation}
As $g_\treegraph^U(\cdot,\cdot)$ it is symmetric, finite and vanishes for
$x\notin U$ or $y\notin U$. The functions $G_\finitegraph(\cdot,\cdot)$
and $g_\finitegraph^U(\cdot,\cdot)$ are related by a similar expression
as the identity \eqref{1.4} for the Green functions on $\treegraph$. More
precisely, for $U \subsetneq \finitegraph$  it holds (see \cite{36},
  Lemma 1.4)
\begin{equation}
  \label{1.7}
  G_\finitegraph(x,y) = g_\finitegraph^U(x,y) + E_x^\finitegraph
  \big[G_\finitegraph(X_{T_U},y) \big] -\frac{1}{N_n} E_x^\finitegraph [T_U]
  \quad \text{for } x,y \in \finitegraph.
\end{equation}
(Lemma 1.4 in \cite{36} is stated  in the case of a discrete $d$-dimensional
  torus as underlying graph. However, its proof applies as well to the
  graph $\finitegraph$.)

Recall from \eqref{0.1} that $(\Psi_\finitegraph(x))_{x \in \finitegraph}$
is the centred Gaussian field with covariance given by
$G_\finitegraph(\cdot,\cdot)$. We point out that the Green function
$G_\finitegraph(\cdot,\cdot)$ is called 'zero-average' since its average
over $\finitegraph$ in any of the two arguments is zero. This implies
that the average of $\Psi_\finitegraph(x)$ over $x \in \finitegraph$
vanishes $\mathbb P^\finitegraph$-almost surely and explains the name
'zero-average Gaussian free field'.

In the same way as the identity \eqref{1.4} allows for the property
\eqref{1.18} of the Gaussian free field $\varphi_\treegraph$ on
$\treegraph$, the identity \eqref{1.7} implies a similar (but not equal)
property of the zero-average Gaussian free field $\Psi_\finitegraph$ on
$\finitegraph$. It is given below and follows from \cite{36}, Lemma 1.7.
There it is stated and proved for the zero-average Gaussian free field on
the discrete $d$-dimensional torus but the proof applies, with the
obvious adjustments, also to our situation. For
$U \subsetneq \finitegraph$ set
\begin{equation}
  \label{450}
  \varphi_\finitegraph^U(x) \coloneqq  \Psi_\finitegraph(x) - E_x^\finitegraph[
    \Psi_\finitegraph(X_{T_U}) ] \quad \text{for $x\in \finitegraph$}.
\end{equation}
Then,
\begin{equation}
  \label{460}
  \parbox{12cm}{
    under $\mathbb P^\finitegraph$, $(\varphi_\finitegraph^U(x))_{x\in
      \finitegraph}$ is a centred Gaussian field on $\finitegraph$ with covariance
    $\mathbb E^\finitegraph [\varphi_\finitegraph^U(x) \varphi_\finitegraph^U(y)] =
    g_\finitegraph^U(x,y)$ for all $x,y \in \finitegraph$.
  }
\end{equation}
Note that $(\varphi_\finitegraph^U(x))_{x\in \finitegraph}$ cannot be
independent from $(\Psi_\finitegraph(x))_{x\in \finitegraph \setminus U}$
due to the zero-average property of $\Psi_\finitegraph$.

We conclude Section \ref{section1} with an upper bound on
$G_\finitegraph(\cdot,\cdot)$ which is going to be of particular use in
the proof of Proposition \ref{proposition2.1} needed for the
supercritical phase. Note that the obtained bound \eqref{30}  resembles
the expression for the Green function $g_\treegraph(\cdot,\cdot)$ on
$\treegraph$ (see \eqref{1.1}). We first define the new constant
\begin{equation}
  \label{32}
  c_0 \coloneqq  \frac{\alpha \beta}{d-1} \overset{\eqref{wlog}}{\in} (0,1) .
\end{equation}
\begin{proposition}
  \label{proposition1.1}
  For all $n\geq 1$ and $x,y \in \finitegraph$ it holds that
  \begin{equation}
    \label{1.8}
    G_\finitegraph(x,y) \leq  \frac{16}{7} \frac{d-1}{d-2} \Big( \frac1{d-1}
      \Big)^{d_\finitegraph(x,y)}+  2 \ln(N_n) N_n^{-\frac{c_0}{\beta}}   +
    \frac1{\beta N_n^{c_0}}.
  \end{equation}
  In particular, for all $n$ large enough and $x,y \in \finitegraph$ with
  $d_\finitegraph(x,y) \leq \frac{c_0}{3}\log_{d-1}(N_n)$ it holds that
  \begin{equation}
    \label{30}
    G_\finitegraph(x,y) \leq  3  \frac{d-1}{d-2} \Big( \frac1{d-1}
      \Big)^{d_\finitegraph(x,y)}.
  \end{equation}
\end{proposition}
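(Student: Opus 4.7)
The strategy is to time-split the integral \eqref{1.5} defining $G_\finitegraph(x,y)$ at a threshold $T := c_0 \beta^{-1} \ln(N_n)$, treating the tail via the spectral gap and the short-time part via a comparison with the walk on $\treegraph$ through the cover tree \eqref{4}. The tail bound relies on the standard spectral estimate $|P_x^\finitegraph[\overline X_t = y] - 1/N_n| \leq e^{-\lambda_\finitegraph t}$, which together with \eqref{2} yields $\int_T^\infty |P_x^\finitegraph[\overline X_t = y] - 1/N_n|\,dt \leq e^{-\beta T}/\beta = (\beta N_n^{c_0})^{-1}$, accounting for the last term in \eqref{1.8}.

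For the short-time integral I would use \eqref{4} to rewrite
\[
\int_0^T P_x^\finitegraph[\overline X_t = y]\,dt = \sum_{z \in \pi_{n,x}^{-1}(y)} \int_0^T P_\rot^\treegraph[\overline X_t = z]\,dt,
\]
and split the preimage set at $R := \lfloor \alpha \log_{d-1}(N_n)\rfloor$, letting $A := \pi_{n,x}^{-1}(y) \cap B_\treegraph(\rot, R)$. The contribution from $z \notin A$ is at most $T \cdot P_\rot^\treegraph[T_{B_\treegraph(\rot, R)} \leq T]$, and since reaching tree-distance $R$ requires at least $R$ jumps of the rate-one Poisson clock, a Chernoff bound for Poisson$(T)$ gives $P_\rot^\treegraph[T_{B_\treegraph(\rot, R)} \leq T] \leq N_n^{-c_0/\beta}$ for all $d \geq 3$. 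Together with $T = O(\ln N_n)$ and the $-T/N_n$ correction this produces the middle error term $2\ln(N_n) N_n^{-c_0/\beta}$.

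For $z \in A$ I would bound $\int_0^T P_\rot^\treegraph[\overline X_t = z]\,dt \leq g_\treegraph(\rot, z) = \tfrac{d-1}{d-2}(d-1)^{-d_\treegraph(\rot,z)}$. By \eqref{21} each such $z$ corresponds to a non-backtracking path from $x$ to $y$ in $B_\finitegraph(x, R)$, and by \eqref{1} this ball is unicyclic (tree excess at most $1$), so the NB paths split into at most two families whose lengths form arithmetic progressions of common difference equal to the cycle length $L \geq 3$, starting at the geodesic lengths (the smaller being $d_\finitegraph(x,y)$). A geometric sum yields
\[
\sum_{z \in A}(d-1)^{-d_\treegraph(\rot, z)} \leq \frac{2(d-1)^{-d_\finitegraph(x,y)}}{1-(d-1)^{-L}} \leq \frac{16}{7}(d-1)^{-d_\finitegraph(x,y)},
\]
where the last step uses $(d-1)^L \geq (d-1)^3 \geq 8$ for $d \geq 3$; together with the prefactor $\tfrac{d-1}{d-2}$ this yields the leading term in \eqref{1.8}.

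Finally, \eqref{30} follows for $n$ large enough by noting that under $d_\finitegraph(x,y) \leq (c_0/3)\log_{d-1}(N_n)$, the leading term of \eqref{1.8} is at least of order $N_n^{-c_0/3}$, while the two error terms decay strictly faster: $\ln(N_n) N_n^{-c_0/\beta}$ with $c_0/\beta \geq c_0/2 > c_0/3$ since $\beta \leq 2$, and $N_n^{-c_0}$. The technically delicate step is the Chernoff estimate in the previous paragraph: with $R/T = (d-1)/\ln(d-1)$, one must verify that the Chernoff exponent $R(\ln(R/T) - 1) + T$ dominates $(c_0/\beta)\ln N_n$ for every $d \geq 3$; the inequality is tight for $d = 3$ but increasingly slack for larger $d$, and this is the place where the particular constant $c_0 = \alpha\beta/(d-1)$ introduced in \eqref{32} is calibrated.
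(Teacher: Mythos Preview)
Your proposal is correct and follows essentially the same route as the paper: split the defining integral at $T=c_0\beta^{-1}\ln N_n$, control the tail by the spectral gap, pass to the cover tree for the short-time piece, separate off the contribution from preimages at tree-distance $>R=\lfloor\alpha\log_{d-1}N_n\rfloor$ via a Poisson large-deviation bound, and sum the remaining Green-function contributions using the unicyclic structure of $B_\finitegraph(x,R)$ guaranteed by \eqref{1}. The geometric series leading to $16/7$ and the derivation of \eqref{30} from \eqref{1.8} match the paper exactly.

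Two minor remarks. First, the paper handles the Poisson tail by the exponential Markov inequality with the specific tilt $\theta=\ln(d-1)$ (rather than the optimal Chernoff tilt), which gives $\mathbb P[M_T\ge R]\le (d-1)^{-R}e^{T(d-2)}=N_n^{-c_0/\beta}$ on the nose for every $d\ge3$; this avoids your verification that $(d-1)/\ln(d-1)\ge e$, which incidentally is tightest at $d=4$ (where $d-1=3$ is closest to $e$), not at $d=3$. Second, your description of the non-backtracking paths as ``two arithmetic progressions starting at the geodesic lengths'' is slightly looser than what is actually needed and not literally accurate in all cases (when the $x$-to-cycle and $y$-to-cycle segments intersect, there is one short path plus two equal-length families); the paper instead proves the weaker but sufficient statement that each length window of width $\ell$ contains at most two preimages (Lemma~\ref{unhappylemma}), which is exactly what the geometric sum requires.
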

\begin{proof} \let\qed\relax
  We set
  $t_\finitegraph \coloneqq \frac{c_0}{\beta} \ln(N_n)
  =\frac{\alpha}{d-1} \ln(N_n) \overset{\eqref{wlog}}{\leq} \ln(N_n)$.
  By \cite{44}, Corollary 2.1.5, one then has (the stationary
    distribution of $(\overline{X}_t)_{t \geq 0}$ is the uniform
    distribution on $\finitegraph$ due to \eqref{0})
  \begin{equation}
    \label{1.12}
    \begin{split}
      \int_{t_\finitegraph}^\infty \Big| P_x^\finitegraph[\overline{X}_t = y] -
      \frac1{N_n}  \Big| \, dt
      \leq \int_{t_\finitegraph}^\infty  e^{-\lambda_\finitegraph t} \, dt
      =\frac{e^{-\lambda_\finitegraph t_\finitegraph}}{\lambda_\finitegraph}
      \overset{\eqref{2}}{\leq}  \frac{1}{\beta N_n^{c_0}} .
    \end{split}
  \end{equation}
  On the other hand, by switching to the discrete-time walk
  $(X_k)_{k\geq 0}$ and with $M_t\sim \textup{Poi}(t)$ for $t \geq 0$
  describing the number of jumps of the continuous-time simple random
  walk up to time $t$, we have
  \begin{equation}
    \label{839}
    \begin{split}
      &\int_0^{t_\finitegraph} \Big| P_x^\finitegraph[\overline{X}_t = y] -
      \frac1{N_n}  \Big| \, dt  \leq \int_0^{t_\finitegraph} \sum_{k=0}^\infty
      \mathbb P[M_t = k] P_x^\finitegraph[X_k = y] \, dt + \frac{t_\finitegraph}{N_n}
      \\
      &\leq \sum_{k=0}^{\lfloor \alpha \log_{d-1}(N_n) \rfloor} \!\!\!
      P_x^\finitegraph[X_k = y]  \underbrace{\int_0^\infty \frac{t^k}{k!}e^{-t} \,
        dt}_{=1} + \int_0^{t_\finitegraph} \mathbb P[M_t \geq \alpha \log_{d-1}(N_n)]
      \, dt + \frac{t_\finitegraph}{N_n}.
    \end{split}
  \end{equation}
  Note that for $0\leq t \leq t_\finitegraph$  by  Markov's inequality one has
  $\mathbb P[M_t \geq \alpha \log_{d-1}(N_n)] =\mathbb P[(d-1)^{M_t} \geq
    N_n^\alpha]  \leq N_n^{-\alpha} \mathbb E[e^{\ln(d-1)M_t}] = N_n^{-\alpha}
  \exp(t(d-2)) \leq N_n^{-\alpha}  \exp(t_\finitegraph(d-2))= N_n^{-\alpha}
  \exp(\alpha \ln(N_n)) \exp(-t_\finitegraph) =
  \exp(-t_\finitegraph)=N_n^{-{c_0}/\beta}$. Therefore \eqref{839} implies
  \begin{equation}
    \label{1.13}
    \begin{split}
      \int_0^{t_\finitegraph} \Big| &P_x^\finitegraph[\overline{X}_t = y] -
      \frac1{N_n}  \Big| \, dt  \leq \sum_{k=0}^{\lfloor \alpha \log_{d-1}(N_n)
        \rfloor} P_x^\finitegraph[X_k = y]  + t_\finitegraph N_n^{-\frac{c_0}\beta} +
      \frac{t_\finitegraph}{N_n} \\
      &\stackrel[\eqref{32}]{\eqref{4}}{\leq}  \sum_{k=0}^{\lfloor \alpha
        \log_{d-1}(N_n) \rfloor} P_\rot^\treegraph[X_k \in \pi_{n,x}^{-1}(\{y\})]
      + 2 \ln(N_n) N_n^{-\frac{c_0}{\beta}}
    \end{split}
  \end{equation}
  for the cover tree $\pi_{n,x}$ of $\finitegraph$ at $x$. To bound the
  sum appearing on the right hand side of \eqref{1.13} we consider
  different cases for
  $\pi_{n,x}^{-1}(\{y\}) \cap
  B_\treegraph(\rot,\lfloor\alpha \log_{d-1}(N_n)\rfloor )$.

  If $\big| \pi_{n,x}^{-1}(\{y\}) \cap B_\treegraph(\rot,\lfloor\alpha
    \log_{d-1}(N_n)\rfloor ) \big| = 0$, then
  the sum on  the last line of \eqref{1.13} vanishes and together with
  \eqref{1.12} this shows \eqref{1.8}.

  If $\big| \pi_{n,x}^{-1}(\{y\}) \cap B_\treegraph(\rot,\lfloor\alpha
    \log_{d-1}(N_n)\rfloor ) \big| = 1$, say the  intersection is $\{u\}$ (this is
    in particular the case if $y \in B_\finitegraph(x,\lfloor\alpha
      \log_{d-1}(N_n)\rfloor )$ and $\tx(B_\finitegraph(x,\lfloor\alpha
        \log_{d-1}(N_n)\rfloor ))=0$),
  then the sum appearing on the right hand side of \eqref{1.13} can be
  rewritten as
  \begin{equation*}
    \begin{split}
      \sum_{k=0}^{\lfloor \alpha \log_{d-1}(N_n) \rfloor}
      P_\rot^\treegraph[&X_k \in \pi_{n,x}^{-1}(\{y\})]
      \overset{\phantom{\eqref{1.1}}}{=}
      \sum_{k=0}^{\lfloor \alpha \log_{d-1}(N_n) \rfloor}
      P_\rot^\treegraph[X_k = u]
      \overset{\eqref{1.1}}{\leq} g_\treegraph(\rot,u) \\
      &   \overset{\eqref{1.1}}{=} \frac{d-1}{d-2} \Big( \frac1{d-1}
        \Big)^{d_\treegraph(\rot,u)} = \frac{d-1}{d-2} \Big( \frac1{d-1}
        \Big)^{d_\finitegraph(x,y)}
    \end{split}
  \end{equation*}
  and together with \eqref{1.12} this shows \eqref{1.8}.

  It remains to consider the last case, that is,
  $\big| \pi_{n,x}^{-1}(\{y\}) \cap
  B_\treegraph(\rot,\lfloor\alpha \log_{d-1}(N_n)\rfloor ) \big|  \geq 2$.
  Then $B_\finitegraph(x,\lfloor\alpha \log_{d-1}(N_n)\rfloor )$ contains
  a (unique by \eqref{1}) cycle of some length $\ell$. Let us abbreviate
  $B \coloneqq B_\treegraph(\rot,\lfloor\alpha \log_{d-1}(N_n)\rfloor )$
  and define for $m\geq 0$ the disjoint intervals
  $I_m \coloneqq [d_\finitegraph(x,y)+m\ell,d_\finitegraph(x,y)+(m+1)\ell)$
  of length  $\ell$. We claim that  one has the disjoint union
  \begin{equation}
    \label{40}
    \begin{split}
      &\pi_{n,x}^{-1}(\{y\}) \cap B = \bigcup_{m=0}^\infty \big\{z \in
        \pi_{n,x}^{-1}(\{y\}) \cap B \, | \, d_\treegraph(\rot,z) \in I_m \big\}
      \\
      &\text{with } \big| \big\{z \in \pi_{n,x}^{-1}(\{y\}) \cap B \, | \,
        d_\treegraph(\rot,z) \in I_m \big\} \big| \leq 2 \text{ for }  m\geq 0.
    \end{split}
  \end{equation}
  This fact is a direct consequence of Lemma \ref{unhappylemma} stated
  and proved below. We first conclude the proof of Proposition
  \ref{proposition1.1} assuming \eqref{40}.  The sum on the last line of
  \eqref{1.13} can be bounded, in case
  $\big| \pi_{n,x}^{-1}(\{y\}) \cap B_\treegraph(\rot,
    \lfloor\alpha \log_{d-1}(N_n)\rfloor ) \big| \geq 2$,
  by
  \begin{equation}
    \label{1.15}
    \begin{split}
      & \sum_{k=0}^{\lfloor \alpha \log_{d-1}(N_n) \rfloor}
      P_\rot^\treegraph[X_k \in \pi_{n,x}^{-1}(\{y\})]
      \overset{\phantom{\eqref{40}}}{\leq} \sum_{k=0}^{\infty}  \sum_{z \in
        \pi_{n,x}^{-1}(\{y\}) \cap B}  P_\rot^\treegraph[X_k =z] \\
      & \quad \stackrel{\, \eqref{1.1}\,}{=} \sum_{z \in
        \pi_{n,x}^{-1}(\{y\}) \cap B} \frac{d-1}{d-2} \Big( \frac1{d-1}
        \Big)^{d_\treegraph(\rot,z)}
      \overset{\eqref{40}}{\leq}  2 \frac{d-1}{d-2} \sum_{m=0}^\infty  \Big(
        \frac1{d-1} \Big)^{d_\finitegraph(x,y)+m\ell} \\
      &  \quad \overset{\phantom{\eqref{40}}}{=}   2 \frac{d-1}{d-2} \Big(
        \frac1{d-1} \Big)^{d_\finitegraph(x,y)}  \frac{1}{1-\big( \frac1{d-1}
          \big)^{\ell}}
      \overset{\phantom{\eqref{40}}}{\leq} \frac{16}{7}  \frac{d-1}{d-2} \Big(
        \frac1{d-1} \Big)^{d_\finitegraph(x,y)},
    \end{split}
  \end{equation}
  where in the last step we use that $d\geq 3$ and $\ell \geq 3$, too, since
  $\ell$ is the length of a cycle.
  The combination of \eqref{1.12}, \eqref{1.13}  and \eqref{1.15}  concludes the
  proof of \eqref{1.8} also in this case, once \eqref{40} is asserted.
  To derive \eqref{30} from \eqref{1.8} it is enough to recall  that $c_0 \leq 1$
  and $\beta \leq 2$ (see \eqref{32} and \eqref{wlog}). Hence one has
  $\frac{c_0}{3} < \min\{c_0,\frac{c_0}{\beta}\}$ and therefore for $n$ large
  enough also
  \begin{equation}
    \label{31}
    2 \ln(N_n) N_n^{-\frac{c_0}{\beta}} + \frac1{\beta N_n^{c_0}}  \leq
    \frac1{N_n^{\frac{c_0}{3}}} = \Big( \frac1{d-1}
      \Big)^{\frac{c_0}{3}\log_{d-1}(N_n)}  \leq \underbrace{\frac{5}{7}
      \frac{d-1}{d-2}}_{\geq 1} \Big( \frac1{d-1} \Big)^{d_\finitegraph(x,y)},
  \end{equation}
  assuming $x,y \in \finitegraph$ are such that $d_\finitegraph(x,y) \leq
  \frac{c_0}{3}\log_{d-1}(N_n)$. We can combine \eqref{1.8} with \eqref{31} to
  obtain \eqref{30}.

  To conclude the proof of Proposition \ref{proposition1.1} it
  only remains to show \eqref{40}, which follows directly from the next lemma.
\end{proof}

\begin{lemma}
  \label{unhappylemma}
  Let $x\in \finitegraph$, $R\geq 0$ and assume $B_\finitegraph(x,R)$ contains a
  unique cycle of length $\ell$. Recall that $\pi_{n,x}$ is the fixed cover tree
  of $\finitegraph$ at $x$ and assume  $y\in B_\finitegraph(x,R)$. Then
  \begin{equation}
    \label{41}
    \Big| \big\{ z \in \pi_{n,x}^{-1}(\{y\}) \cap B_\treegraph(\rot,R) \,
      \big| \,  d_\treegraph(\rot,z) \in [0,d_\finitegraph(x,y)) \big\} \Big| =
    0.
  \end{equation}
  Moreover, for all $k \geq 0$ one has
  \begin{equation}
    \label{42}
    \begin{split}
      \Big| \big\{ z \in \pi_{n,x}^{-1}(\{y\}) \cap B_\treegraph(\rot,R) \,
        \big| \,  d_\treegraph(\rot,z) \in [k,k+\ell) \big\} \Big| \leq 2.
    \end{split}
  \end{equation}
\end{lemma}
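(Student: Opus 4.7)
My plan is to use the cover-tree property \eqref{21} to translate the statement into a combinatorial question about non-backtracking walks in $\finitegraph$, and then to exploit the unique-cycle hypothesis on $H \coloneqq B_\finitegraph(x, R)$. By \eqref{21} the geodesic in $\treegraph$ from $\rot$ to $z$ is non-backtracking and projects under $\pi_{n,x}$ to a non-backtracking walk from $x$ to $\pi_{n,x}(z)=y$ of the same length; conversely, every non-backtracking walk from $x$ in $\finitegraph$ lifts to a unique non-backtracking walk from $\rot$ in $\treegraph$. This yields a length-preserving bijection between $\pi_{n,x}^{-1}(\{y\}) \cap B_\treegraph(\rot, R)$ and the set of non-backtracking walks from $x$ to $y$ in $\finitegraph$ of length at most $R$; such walks automatically stay inside $H$. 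Assertion \eqref{41} is then immediate since any walk from $x$ to $y$ has length at least $d_\finitegraph(x,y)$.

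For \eqref{42}, let $C$ denote the unique cycle of $H$, of length $\ell$. Removing the edges of $C$ partitions $H$ into $\ell$ disjoint rooted subtrees $(T_w)_{w \in C}$, one for each vertex of $C$. Let $u^*, v^* \in C$ be the unique cycle vertices with $x \in T_{u^*}$ and $y \in T_{v^*}$. I would then argue that every non-backtracking walk $p$ from $x$ to $y$ in $H$ is of one of two types: (a) $p$ uses no edge of $C$, in which case necessarily $u^* = v^*$ and $p$ is the unique path from $x$ to $y$ in $T_{u^*}$, of length $L_0 \coloneqq d_{T_{u^*}}(x,y)$; or (b) $p$ walks from $x$ to $u^*$ inside $T_{u^*}$, enters $C$ at $u^*$ in one of two directions $\sigma \in \{+,-\}$, proceeds non-backtrackingly along $C$ without exiting at any intermediate vertex (any detour into a hanging tree $T_w$ would require backtracking to come back, since $T_w$ is a tree), completes $m \geq 0$ full extra windings around $C$, and finally exits at $v^*$ into the unique $T_{v^*}$-path to $y$. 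In case (b), $p$ has length $a + d_\sigma(u^*, v^*) + m \ell + b$, where $a \coloneqq d_{T_{u^*}}(x, u^*)$, $b \coloneqq d_{T_{v^*}}(v^*, y)$, and $d_\sigma(u^*, v^*)$ is the arc length along $C$ from $u^*$ to $v^*$ in direction $\sigma$ (with $m \geq 1$ forced when $u^* = v^*$ and $d_+ + d_- = \ell$ when $u^* \neq v^*$).

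From this classification the set of walk-lengths lies in a union of at most two arithmetic progressions with common difference $\ell$, plus possibly the single extra value $L_0$ when $u^* = v^*$. Each progression contributes at most one term to any window $[k, k+\ell)$, so the count there is at most three a priori. The main obstacle, and the crucial step, is to rule out the simultaneous presence of $L_0$ and a winding length in one window: the triangle inequality in $T_{u^*}$ gives $L_0 \leq a + b$, while the winding lengths when $u^* = v^*$ have the form $a + m\ell + b$ with $m \geq 1$ and thus exceed $L_0$ by at least $\ell$. This forced separation brings the per-window count down to two, establishing \eqref{42}.
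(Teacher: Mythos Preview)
Your proposal is correct and takes essentially the same approach as the paper: both reduce via the cover-tree correspondence \eqref{21} to counting non-backtracking walks from $x$ to $y$ in $B_\finitegraph(x,R)$, classify such walks by how they interact with the unique cycle $C$, and conclude that the possible lengths lie in two arithmetic progressions with common difference $\ell$ (plus, when $x$ and $y$ hang off the same cycle vertex, a single shorter length separated from the progressions by at least $\ell$). Your tree-decomposition phrasing is a bit cleaner than the paper's case split on whether the geodesics from $x$ and $y$ to $C$ intersect, but the underlying argument is the same.
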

\begin{proof}
  For any vertex
  $z \in \pi_{n,x}^{-1}(\{y\}) \cap B_\treegraph(\rot,R)$ there is
  a unique non-backtracking path of length $d_\treegraph(\rot,z)$
  from $\rot$ to $z$ in $B_\treegraph(\rot,R)$. Therefore, by
  the one-to-one correspondence from \eqref{21}, every such $z$ uniquely
  determines a non-backtracking path of length
  $d_\treegraph(\rot,z)$ connecting $x$ to $y$ in
  $B_\finitegraph(x,R)$. Thus \eqref{41} is clear and for \eqref{42} it
  is enough to show that for all $k\geq 0$ one has
  \begin{equation}
    \label{99}
    \Big|\big\{\text{non-backtracking paths from $x$ to $y$ in
        $B_\finitegraph(x,R)$ of length in $[k,k\!+\!\ell)$} \big\}\Big|
     \leq  2.
  \end{equation}

  Let us denote by
  $C \coloneqq \{c_1,\ldots,c_\ell\} \subseteq \finitegraph$ the unique
  cycle of length $\ell$ in $B_\finitegraph(x,R)$ and by
  $x=x_0,\ldots,x_i$ for some $i\geq 0$ the unique non-backtracking path
  in $B_\finitegraph(x,R)$ from $x$ to $C$ such that $x_i \in C$ and
  $x_0,\ldots,x_{i-1} \notin C$ (if $i\geq1$). This path is unique for if
  $x=\widetilde x_0,\ldots,\widetilde x_{j}$ was another such path, then
  one could find a cycle different from $C$ in
  $\{x_0,\ldots,x_i,\widetilde x_0,\ldots,
    \widetilde x_{j},c_1,\ldots,c_\ell \} \subseteq B_\finitegraph(x,R)$.
  Analogously, we let $y=y_0,\ldots,y_j$ for some $j\geq 0$ be the unique
  non-backtracking path in $B_\finitegraph(x,R)$ from $y$ to $C$ such
  that $y_j \in C$ and $y_0,\ldots,y_{j-1} \notin C$ (if $j\geq1$). We
  distinguish two cases: either
  $\{x_0,\ldots,x_i \} \cap \{y_0,\ldots,y_j\} = \emptyset$ or the
  intersection is not empty.

  In the first case any non-backtracking path from $x$ to $y$ in
  $B_\finitegraph(x,R)$ starts with the segment $x_0,\ldots,x_i$ from $x$
  to $C$ and  ends with the segment $y_j,\ldots,y_0$ from $C$ to $y$
  because a  non-backtracking path $v_0,\ldots,v_s$ from $x$ to $y$ in
  $B_\finitegraph(x,R)$ with $(v_0,\ldots,v_i) \neq (x_0,\ldots,x_i)$ or
  $(v_{s-j},\ldots,v_s) \neq (y_j,\ldots,y_0)$ would imply the existence
  of a cycle different from $C$ in
  $\{ v_0,\ldots,v_s,c_1,\ldots,c_\ell,x_0,\ldots,x_i,y_0,\ldots,y_j \}
  \subseteq B_\finitegraph(x,R)$.
  In between the segments $x_0,\ldots,x_i$ and $y_j,\ldots,y_0$ any of
  those non-backtracking paths can only visit vertices in $C$ (else there
    would be another cycle in $B_\finitegraph(x,R)$) and they can only do
  so in clockwise or anti-clockwise direction (because they are
    non-backtracking). To wrap up: any non-backtracking path from $x$ to
  $y$ in $B_\finitegraph(x,R)$ starts with the segment $x_0,\ldots,x_i$,
  then goes $M$ times (for some $M\geq 0$ and some direction) around the
  cycle $C$ from $x_i$ to $x_i$, then continues (in the same direction)
  along the cycle from $x_i$ to $y_j$ (note that $x_i \neq y_j$ by
    assumption) and then ends with the segment $y_j,\ldots,y_0$.

  In the second case, i.e.~if
  $\{x_0,\ldots,x_i \} \cap \{y_0,\ldots,y_j\} \neq \emptyset$, let
  $m\in \{0,\ldots,i\}$ and $m' \in \{0,\ldots,j\}$ be such that
  $x_m=y_{m'}$ and
  $\{x_0,\ldots,x_{m-1} \} \cap \{y_0,\ldots,y_{m'-1}\} = \emptyset$. In
  other words, $x_m=y_{m'}$ is the first common vertex of the paths
  $x_0,\ldots,x_i$ and $y_0,\ldots,y_j$. Any non-backtracking path from
  $x$ to $y$ in $B_\finitegraph(x,R)$  starts with the segment
  $x_0,\ldots,x_m$ and  ends with the segment $y_{m'},\ldots,y_0$ because
  a  non-backtracking path $v_0,\ldots,v_s$ from $x$ to $y$ in
  $B_\finitegraph(x,R)$ with $(v_0,\ldots,v_m) \neq (x_0,\ldots,x_m)$ or
  $(v_{s-m'},\ldots,v_s) \neq (y_{m'},\ldots,y_0)$ would imply the
  existence of  a cycle  in
  $\{ v_0,\ldots,v_s,c_1,\ldots,c_\ell,x_0,\ldots,x_i,y_0,\ldots,y_j \}
  \subseteq B_\finitegraph(x,R)$
  different from  $C$. In between the segments $x_0,\ldots,x_m$ and
  $y_{m'},\ldots,y_0$ any of those non-backtracking paths either does not
  do anything (possible since $x_m=y_{m'}$ by definition, i.e.~the full
    path is $x_0,\ldots,x_m,y_{m'-1},\ldots,y_0$) or it has to form a
  non-backtracking path from $x_m$ to itself  of non-zero length. Note
  that in any graph a non-backtracking path (of non-zero length) from a
  vertex to itself necessarily contains vertices of a cycle. In our
  situation $C$ is the only cycle in $B_\finitegraph(x,R)$ and so any
  non-backtracking path (of non-zero length) from $x_m$ to $x_m$
  necessarily touches $C$. Therefore, it has to start with the segment
  $x_m,\ldots,x_i$ from $x_m$ to $C$ and end with the segment
  $x_i,\ldots,x_m$ from $C$ to $x_m$ (else there would be a cycle
    different from $C$ in $B_\finitegraph(x,R)$). Between the segments
  $x_m,\ldots,x_i$ and $x_i,\ldots,x_m$ it can only visit vertices in $C$
  (else there would be another cycle in $B_\finitegraph(x,R)$) and it has
  to do at least one full turn around the cycle in clockwise or
  anti-clockwise direction (because  non-backtracking). To wrap up: any
  non-backtracking path from $x$ to $y$ in $B_\finitegraph(x,R)$ is
  either of the form $x_0,\ldots,x_m,y_{m'-1},\ldots,y_0$ or between the
  initial segment    $x_0,\ldots,x_m$ and the final segment
  $y_{m'},\ldots,y_0$ it continues with the segment $x_m,\ldots,x_i$,
  then goes $M$ times (for some $M\geq 1$ and some direction) around the
  cycle $C$ from $x_i$ to $x_i$ and then goes back to $y_{m'}$ through
  $x_i,\ldots,x_m$.

  In any of the two cases, different non-backtracking paths from $x$ to
  $y$ in $B_\finitegraph(x,R)$ differ by at least $\ell$ in length (the
    length of the cycle) except if they go around the full cycle both $M$
  times but in different directions (clockwise or anti-clockwise). This
  shows \eqref{99} and concludes the proof of Lemma \ref{unhappylemma}
  and hence also of Proposition \ref{proposition1.1}.
\end{proof}



\section{The local picture of the zero-average Gaussian free field}
\label{localpicturesection}

In this section we investigate the local behaviour of the zero-average
Gaussian free field and we derive key results and estimates that will be
used in Section \ref{subcriticalsection} and Section
\ref{supercriticalsection} for proving the main theorems of this article
(Theorem \ref{microscopiccomponents} and Theorem
  \ref{manymesoscopiccomponents} corresponding to \eqref{0.6} and
  \eqref{0.7}). The results in this section support the intuition that
the \emph{local picture} of the zero-average Gaussian free field
$\Psi_\finitegraph$ on $\finitegraph$ is given by  the Gaussian free
field $\varphi_\treegraph$ on $\treegraph$. We will see two instances
here: first we show in Section \ref{subsectionapproximation} that one can
locally approximate $\Psi_\finitegraph$ around vertices of $\finitegraph$
with a tree-like neighbourhood (Theorem~\ref{extendedtheorem2.2}). This
will be the type of approximation of $\Psi_\finitegraph$ by
$\varphi_\treegraph$ needed to deal with the supercritical phase in
Section \ref{supercriticalsection} and to prove
Theorem~\ref{manymesoscopiccomponents} (corresponding to \eqref{0.7}).
Then in Section~\ref{sectionconditionaldistribution}  we compute
conditional distributions of $\Psi_\finitegraph$ (Lemma
  \ref{conditionaldistribution})  and we derive that in certain
situations they resemble conditional distributions of $\varphi_\treegraph$
(Proposition \ref{535}, see also \eqref{837}). This will be the crucial
ingredient for approximating $\Psi_\finitegraph$ by $\varphi_\treegraph$
along the connected components of subcritical level sets and ultimately
proving Theorem \ref{microscopiccomponents} (corresponding to
  \eqref{0.6}) in Section \ref{subcriticalsection}.



\subsection{A local approximation
  \texorpdfstring{of $\Psi_\finitegraph$ by $\varphi_\treegraph$}{}
  on tree-like neighbourhoods}
\label{subsectionapproximation}

The goal of this section is to prove  Theorem \ref{extendedtheorem2.2}
below, stating the approximation of the zero-average Gaussian free field
$\Psi_\finitegraph$ on neighbourhoods of vertices with tree-like
surroundings by the Gaussian free field $\varphi_\treegraph$ on
$\treegraph$. This supports the intuition that the local picture of
$\Psi_\finitegraph$ on $\finitegraph$ is given by $\varphi_\treegraph$ on
$\treegraph$. The approximation derived here will be used in Section
\ref{supercriticalsection} to prove the main result \eqref{0.7},
i.e.~that a linear fraction of the vertices of $\finitegraph$ is
contained in mesoscopic connected components of the level set above level
$h$ if $h<h_\star$. Theorem \ref{extendedtheorem2.2} will allow us to
reduce the required computations on $\Psi_\finitegraph$ to computations
on $\varphi_\treegraph$.

For the remainder of  Section \ref{subsectionapproximation} we introduce
some notation. If $n\geq 1$, $x\in \finitegraph$ and $R\geq 1$ with
$\tx(B_\finitegraph(x,R))=0$, then (see \eqref{20}) let
$\rho_{x,R}: B_\finitegraph(x,R) \to B_\treegraph(\rot,R)$ denote
the graph isomorphism given by
$(\pi_{n,x}\big|_{B_\treegraph(\rot,R)})^{-1}$. Furthermore, for
all $n\geq 1$ and pairs $x,x'\in \finitegraph$ we fix
$z_{x,x'} \in \pi_{n,x}^{-1}(\{x'\}) \subseteq \treegraph$. Finally, if
$n\geq 1$, $x,x' \in \finitegraph$, $R\geq 1$ with
$\tx(B_\finitegraph(x,R)) = 0$, $\tx(B_\finitegraph(x',R)) = 0$ and
$B_\finitegraph(x,R) \cap B_\finitegraph(x',R) = \emptyset$, then  let
$\rho_{x,x',R}: B_\finitegraph(x,R)\cup B_\finitegraph(x',R)
\to B_\treegraph(\rot,R)\cup B_\treegraph(z_{x,x'},R)$
denote the graph isomorphism given by
$(\pi_{n,x}\big|_{B_\treegraph(\rot,R)\cup B_\treegraph(z_{x,x'},R)})^{-1}$.
Finally, recall the constant $c_0$ from \eqref{32}. The main result of
this section is the following

\begin{theorem}
  \label{extendedtheorem2.2}
  For all $n$ large enough, $x,x' \in \finitegraph$, $1\leq r < R \leq
  \frac{c_0}{6} \log_{d-1}(N_n)$ such that
  $\tx(B_\finitegraph(x,2R))=0$, $\tx(B_\finitegraph(x',2R)) =0$
  and $B_\finitegraph(x,2R) \cap B_\finitegraph(x',2R) = \emptyset$,
  there exists a coupling $\mathbb Q_n$ of $\Psi_\finitegraph$ and
  $\varphi_\treegraph$ such that for all $\varepsilon >0$
  \begin{equation}
    \label{226}
    \begin{split}
      &\mathbb Q_n \bigg[ \sup_{y \in B_\finitegraph(x,r) \cup B_\finitegraph(x',r)}
        \big| \Psi_\finitegraph(y) - \varphi_\treegraph(\rho_{x,x',2R}(y))  \big|  >
        \varepsilon \bigg] \\
      &\qquad \qquad \qquad \qquad  \leq  8 d(d-1)^{r}\exp
      \Big(-\frac{\varepsilon^2(d-1)(d-2)}{24 d^2}(d-1)^{R-2r} \Big).
    \end{split}
  \end{equation}
  In particular, for all $n$ large enough, $x\in \finitegraph$,
  $1\leq r < R \leq \frac{c_0}{6} \log_{d-1}(N_n)$ such that
  $\tx(B_\finitegraph(x,2R))=0$, there exists a coupling $\mathbb Q_n$ of
  $\Psi_\finitegraph$ and $\varphi_\treegraph$ such that for all
  $\varepsilon >0$ the same bound as in \eqref{226} applies to
  $\mathbb Q_n \big[ \sup_{y \in B_\finitegraph(x,r)} \big|
    \Psi_\finitegraph(y) - \varphi_\treegraph(\rho_{x,2R}(y))  \big|
    > \varepsilon \big]$.
\end{theorem}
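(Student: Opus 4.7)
The strategy is a Markov-decomposition-plus-coupling argument at the set $U := B_\finitegraph(x,2R) \cup B_\finitegraph(x',2R)$ and its tree image $V := B_\treegraph(\rot,2R) \cup B_\treegraph(z_{x,x'},2R)$, with $\rho := \rho_{x,x',2R}\colon U \to V$ the induced isomorphism. By \eqref{450}--\eqref{460} and the domain Markov property \eqref{1.18} one writes $\Psi_\finitegraph(y) = \varphi_\finitegraph^U(y) + \tilde h(y)$ with $\tilde h(y) := E_y^\finitegraph[\Psi_\finitegraph(X_{T_U})]$, and $\varphi_\treegraph(z) = \varphi_\treegraph^V(z) + \hat h(z)$ with $\hat h(z) := E_z^\treegraph[\varphi_\treegraph(X_{T_V})\bbone_{\{T_V<\infty\}}]$. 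Because the two balls in $U$ are tree-like and disjoint, the simple random walk killed on exiting $U$ is isomorphic to the one killed on exiting $V$ via $\rho$, whence $g_\finitegraph^U(y_1,y_2) = g_\treegraph^V(\rho(y_1),\rho(y_2))$ for $y_1,y_2 \in U$. I build the coupling $\mathbb Q_n$ by first sampling $\Psi_\finitegraph \sim \mathbb P^\finitegraph$ (which determines $\varphi_\finitegraph^U$ on $U$), setting $\varphi_\treegraph^V(z) := \varphi_\finitegraph^U(\rho^{-1}(z))$ on $V$ (consistent marginal thanks to the covariance identity), sampling $(\varphi_\treegraph(z))_{z \in \treegraph \setminus V}$ independently of $\Psi_\finitegraph$ with the correct marginal law, and finally setting $\varphi_\treegraph(z) := \varphi_\treegraph^V(z) + \hat h(z)$ on $V$. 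The independence of $\varphi_\treegraph^V$ from $(\varphi_\treegraph(z))_{z \in \treegraph\setminus V}$ asserted in \eqref{1.18} makes $\varphi_\treegraph \sim \mathbb P^\treegraph$, and by construction $\varphi_\finitegraph^U(y) = \varphi_\treegraph^V(\rho(y))$ on $U$, so that for $y \in B_\finitegraph(x,r) \cup B_\finitegraph(x',r) \subset U$,
$\Psi_\finitegraph(y) - \varphi_\treegraph(\rho(y)) = \tilde h(y) - \hat h(\rho(y))$.

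It remains to control this centred-Gaussian difference. Since $d_\finitegraph(\partial U, y) \geq 2R+1-r$ for $y \in B_\finitegraph(x,r) \cup B_\finitegraph(x',r)$ and the analogous estimate holds on the tree, identities \eqref{1.1} and \eqref{1.4} give $\Var(\hat h(\rho(y))) = E_{\rho(y)}^\treegraph[g_\treegraph(X_{T_V},\rho(y))] \leq \frac{d-1}{d-2}(d-1)^{-(2R+1-r)}$, while \eqref{1.7} together with the defining relation for $\tilde h$ yields (after a short direct computation) $\Var(\tilde h(y)) = E_y^\finitegraph[G_\finitegraph(X_{T_U},y)] + \frac{1}{N_n}E_y^\finitegraph[T_U]$. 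The first summand is bounded by $3\frac{d-1}{d-2}(d-1)^{-(2R+1-r)}$ through Proposition \ref{proposition1.1}, whose estimate \eqref{30} applies because $2R \leq \frac{c_0}{3}\log_{d-1}(N_n)$ by hypothesis; the second summand is $O(\log N_n/N_n)$ (drift estimates for the simple random walk on $\treegraph$ give $E_y^\finitegraph[T_U] = O(R)$) and thus negligible for $n$ large. Using $\Var(\tilde h(y) - \hat h(\rho(y))) \leq 2[\Var(\tilde h(y)) + \Var(\hat h(\rho(y)))]$, a standard Gaussian tail bound for each fixed $y$, and a union bound over the at most $4d(d-1)^r$ vertices of $B_\finitegraph(x,r) \cup B_\finitegraph(x',r)$ yields an estimate of the form $c_1 d(d-1)^r \exp(-c_2 \varepsilon^2 (d-1)^{2R-r})$; since $2R - r \geq R - 2r$, this implies \eqref{226} with a suitable choice of the constant in the exponent. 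The single-ball case follows verbatim with $U := B_\finitegraph(x,2R)$ and $V := B_\treegraph(\rot,2R)$.

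The main obstacle, absent in the purely tree setting, is the \emph{lack} of an exact domain Markov property for $\Psi_\finitegraph$: by the remark below \eqref{460}, $\varphi_\finitegraph^U$ is not independent of $(\Psi_\finitegraph(x))_{x \in \finitegraph \setminus U}$ because of the zero-average constraint on $\Psi_\finitegraph$. This is what forces the asymmetric construction of $\mathbb Q_n$ (sample $\Psi_\finitegraph$ first and graft $\varphi_\treegraph$ onto it through $\rho$, rather than the more natural tree-side-first procedure) and is also what produces the parasitic term $\frac{1}{N_n}E_y^\finitegraph[T_U]$ in $\Var(\tilde h(y))$. Handling this term is precisely what necessitates both the restriction $R \leq \frac{c_0}{6}\log_{d-1}(N_n)$, which keeps $|U|$ at a very small polynomial power of $N_n$, and the refined Green-function estimate \eqref{30} of Proposition \ref{proposition1.1}.
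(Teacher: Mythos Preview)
Your approach is correct and follows the same broad strategy as the paper---couple the ``local'' fields $\varphi_\finitegraph^U$ and $\varphi_\treegraph^V$ exactly via the killed-Green-function identity, then control the harmonic-extension remainders by Gaussian tail bounds and a union bound---but the implementation differs in two places. First, you take $U = B_\finitegraph(x,2R)\cup B_\finitegraph(x',2R)$, whereas the paper applies Lemma~\ref{lemma1.2} with the smaller set $U = B_\finitegraph(x,R-1)\cup B_\finitegraph(x',R-1)$. Second, you bound the variances of $\tilde h(y)$ and $\hat h(\rho(y))$ through the closed-form identities $\Var(\hat h(z))=E_z^\treegraph[g_\treegraph(X_{T_V},z)]$ and $\Var(\tilde h(y))=E_y^\finitegraph[G_\finitegraph(X_{T_U},y)]+\tfrac{1}{N_n}E_y^\finitegraph[T_U]$, combined with the pointwise Green-function decay. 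The paper instead expands the variance as the double sum $\sum_{z_1,z_2} P_y[X_{T_U}=z_1]P_y[X_{T_U}=z_2]\,G(z_1,z_2)$ and controls it via the hitting-distribution estimate of Lemma~\ref{25} (this is Proposition~\ref{proposition2.1}). Your route is shorter---Lemma~\ref{25} and the bulk of Proposition~\ref{proposition2.1} are bypassed---and in fact yields the sharper exponent $(d-1)^{2R-r}$ rather than the paper's $(d-1)^{R-2r}$, so \eqref{226} follows a fortiori. One small caveat: your invocation of \eqref{30} for $G_\finitegraph(X_{T_U},y)$ is slightly imprecise, since $d_\finitegraph(X_{T_U},y)$ need not satisfy the upper bound required there (e.g.\ when the walk exits via the sphere around $x'$); but the unrestricted estimate \eqref{1.8} gives exactly what you need, because its error terms are $o(N_n^{-c_0/3})$ while $(d-1)^{-(2R+1-r)} \ge (d-1)^{-2R} \ge N_n^{-c_0/3}$.
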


We now proceed with some preparations for the proof of Theorem
\ref{extendedtheorem2.2}. The first goal is an easy preliminary coupling
of $\Psi_\finitegraph$ and $\varphi_\treegraph$ around vertices of
$\finitegraph$ with tree-like neighbourhood (Lemma \ref{lemma1.2}). In
its proof we  use the following observation.

\begin{remark}
  \label{extraremark4}
  Let $x,x'\in \finitegraph$ and $R\geq 1$ satisfy
  $\tx(B_\finitegraph(x,R)) = 0$, $\tx(B_\finitegraph(x',R)) = 0$ and
  $B_\finitegraph(x,R) \cap B_\finitegraph(x',R) = \emptyset$. Assume
  $U  \subseteq B_\finitegraph(x,R-1) \cup B_\finitegraph(x',R-1)$,  so
  that
  $\partial_\finitegraph U  \subseteq B_\finitegraph(x,R) \cup B_\finitegraph(x',R)$.
  Then for any
  $y \in B_\finitegraph(x,R) \cup B_\finitegraph(x',R)\subseteq \finitegraph$
  the image under $\pi_{n,x}$ of the law of the simple random walk on
  $\treegraph$ started at
  $\rho_{x,x',R}(y) \in B_\treegraph(\rot,R)
  \cup B_\treegraph(z_{x,x'},R) \subseteq \treegraph$
  and stopped when exiting $\rho_{x,x',R}(U)$ is the same as the law of
  the simple random walk on $\finitegraph$ started at $y$ and stopped
  when exiting $U$. In particular, the hitting distribution of the
  boundary $\partial_\finitegraph U$ of the walk on $\finitegraph$ is the
  image under $\pi_{n,x}$ of the hitting distribution of
  $\partial_\treegraph \rho_{x,x',R}(U)$ of the walk on $\treegraph$,
  that is
  \begin{equation}
    \label{extraequation21}
    \begin{split}
      P_y^\finitegraph[X_{T_U} = z] =
      P_{\rho_{x,x',R}(y)}^\treegraph[X_{T_{\rho_{x,x',R}(U)}} = \rho_{x,x',R}(z)]
      \quad \text{for all } y \in U \text{ and } z \in \partial_\finitegraph U.
    \end{split}
  \end{equation}
  Similarly, for any $x\in \finitegraph$ with
  $\tx(B_\finitegraph(x,R)) = 0$, $U\subseteq B_\finitegraph(x,R-1)$, and
  $y \in B_\finitegraph(x,R)\subseteq \finitegraph$ the image under
  $\pi_{n,x}$ of the law of the simple random walk on $\treegraph$
  started at
  $\rho_{x,R}(y) \in B_\treegraph(\rot,R) \subseteq \treegraph$ and
  stopped when exiting $\rho_{x,R}(U)$ is the same as the  law of  the
  simple random walk on $\finitegraph$ started at $y$ and stopped when
  exiting $U$. So \eqref{extraequation21} holds for $\rho_{x,x',R}$
  replaced by $\rho_{x,R}$. \qed
\end{remark}

As a direct implication of the above Remark \ref{extraremark4} we obtain
a straightforward way to couple  $\Psi_\finitegraph$ on
$B_\finitegraph(x,R) \cup B_\finitegraph(x',R)$ with $\varphi_\treegraph$
on  $B_\treegraph(\rot,R) \cup B_\treegraph(z_{x,x'},R)$.

\begin{lemma}
  \label{lemma1.2}
  Assume $x,x' \in \finitegraph$ with $\tx (B_\finitegraph(x,R))=0$ and
  $\tx (B_\finitegraph(x',R))=0$ satisfy
  $B_\finitegraph(x,R) \cap B_\finitegraph(x',R) = \emptyset$ for some
  $R \geq 1$. Let
  $U \subseteq B_\finitegraph(x,R-1) \cup B_\finitegraph(x',R-1)$. Then
  there exists a coupling of $\Psi_\finitegraph$ and $\varphi_\treegraph$
  such that
  \begin{equation}
    \label{1.20}
    \begin{split}
      \Psi_\finitegraph(y) - E_y^\finitegraph[ \Psi_\finitegraph(X_{T_{U}}) ]  =
      \varphi_\treegraph(\rho_{x,x',R}(y)) &- E_{\rho_{x,x',R}(y)}^\treegraph[
        \varphi_\treegraph(X_{T_{\rho_{x,x',R}(U)}}) ] \\
      & \text{for all } y \in B_\finitegraph(x,R) \cup B_\finitegraph(x',R).
    \end{split}
  \end{equation}
  Similarly, if we only have  $x \in \finitegraph$ with
  $\tx(B_\finitegraph(x,R))=0$ for some $R \geq 1$ and
  $U \subseteq B_\finitegraph(x,R-1)$, then \eqref{1.20}  holds for all
  $y \in B_\finitegraph(x,R)$ with $\rho_{x,x',R}$ replaced by $\rho_{x,R}$.
\end{lemma}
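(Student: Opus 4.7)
The plan is to recognise both sides of \eqref{1.20} as values of \emph{Dirichlet-killed} Gaussian free fields and then reduce the construction of the coupling to the fact that the restrictions of these fields to the relevant balls have the same distribution. Indeed, by \eqref{450} the left-hand side of \eqref{1.20} is exactly $\varphi_\finitegraph^U(y)$, while the right-hand side coincides with $\varphi_\treegraph^{\rho_{x,x',R}(U)}(\rho_{x,x',R}(y))$ in the sense of the definition stated above \eqref{1.18}: the indicator $\bbone_{\{T_{\rho_{x,x',R}(U)}<\infty\}}$ from that definition equals $1$ almost surely, because the set $\rho_{x,x',R}(U)$ is finite and simple random walk on $\treegraph$ is transient for $d\geq 3$. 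For $y$ outside $U$ both sides of \eqref{1.20} vanish trivially since $T_U=0$, so the only content of \eqref{1.20} lies on $U$.

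The key algebraic step is the identity
\begin{equation*}
  g_\finitegraph^U(y,y')
  = g_\treegraph^{\rho_{x,x',R}(U)}\bigl(\rho_{x,x',R}(y),\rho_{x,x',R}(y')\bigr)
  \qquad \text{for all } y,y' \in U,
\end{equation*}
which is an immediate consequence of \eqref{1.6}, its tree analogue, and Remark \ref{extraremark4}: the law of the simple random walk on $\finitegraph$ started at $y$ and killed at exiting $U$ is the pushforward under $\pi_{n,x}$ of the walk on $\treegraph$ started at $\rho_{x,x',R}(y)$ and killed at exiting $\rho_{x,x',R}(U)$, so the expected occupation times of $\{y'\}$ and of $\{\rho_{x,x',R}(y')\}$ coincide. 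Combined with \eqref{460} and \eqref{1.18}, this shows that the two centered Gaussian vectors $(\varphi_\finitegraph^U(y))_{y\in B_\finitegraph(x,R)\cup B_\finitegraph(x',R)}$ and $(\varphi_\treegraph^{\rho_{x,x',R}(U)}(\rho_{x,x',R}(y)))_{y\in B_\finitegraph(x,R)\cup B_\finitegraph(x',R)}$ have identical distributions.

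Given this identity in law I would assemble the coupling in two steps: first sample $\Psi_\finitegraph$ under $\mathbb P^\finitegraph$, which by \eqref{450} determines the values $\varphi_\finitegraph^U(y)$ on the balls; then sample $\varphi_\treegraph$ from the regular conditional law of $\mathbb P^\treegraph$ given that $\varphi_\treegraph^{\rho_{x,x',R}(U)}(\rho_{x,x',R}(y))$ takes the prescribed values. Because the prescribed configuration has exactly the marginal distribution of the conditioning vector under $\mathbb P^\treegraph$, the resulting $\varphi_\treegraph$ has law $\mathbb P^\treegraph$, and \eqref{1.20} holds on the entire union of balls by construction. The single-ball version follows verbatim using $\rho_{x,R}$ and the corresponding half of Remark \ref{extraremark4}. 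I do not anticipate a real obstacle here: the whole content reduces to the combinatorial check that the Green functions match under $\rho_{x,x',R}$, and the coupling itself is then produced by a routine glueing of Gaussian vectors with matching marginals.
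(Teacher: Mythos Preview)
Your proposal is correct and follows essentially the same approach as the paper: both arguments reduce \eqref{1.20} to the identity of the killed Green functions $g_\finitegraph^U$ and $g_\treegraph^{\rho_{x,x',R}(U)}$ via Remark~\ref{extraremark4}, and then invoke \eqref{460} and \eqref{1.18} to conclude that the two centred Gaussian vectors have the same law. The paper's proof is simply terser, leaving the explicit construction of the coupling of the full fields $\Psi_\finitegraph$ and $\varphi_\treegraph$ implicit, whereas you spell it out via conditional sampling; this extra detail is fine and arguably clearer.
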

\begin{proof}
  The proof is analogous to the proof of Lemma 1.10 in \cite{36}. Since
  both sides of \eqref{1.20} describe centred Gaussian fields, it is
  enough to check that the covariance is the same. By  \eqref{460}
  resp.~by \eqref{1.18} the covariance of the field for
  $y,z \in B_\finitegraph(x,R) \cup B_\finitegraph(x',R)$  is
  $g_\finitegraph^U(y,z)$ on the left
  resp.~$g_\treegraph^{\rho_{x,x',R}(U)}(\rho_{x,x',R}(y),\rho_{x,x',R}(z))$ on
  the right hand side. These two covariances are equal by Remark
  \ref{extraremark4} and hence the proof is complete.
\end{proof}

We can now lay out the strategy for proving Theorem
\ref{extendedtheorem2.2}. The idea is to combine the coupling of
$\Psi_\finitegraph$ and $\varphi_\treegraph$ from Lemma \ref{lemma1.2}
(for some suitable choice of $U$) with uniform bounds  on the variance of
the expectations appearing in \eqref{1.20}. These uniform bounds are
shown in Proposition \ref{proposition2.1} and will ultimately  lead to
the proof of Theorem \ref{extendedtheorem2.2}. Before that,  we show a
simple estimate of the hitting distribution of a sphere by the simple
random walk on $\treegraph$ (Lemma \ref{25}). This estimate  is needed
for the proof of the bounds in Proposition \ref{proposition2.1}.
\begin{lemma}
  \label{25}
  Let $R\geq 0$. Then for all $y \in B_\treegraph(\rot,R)$ and $z \in
  S_\treegraph(\rot,R)$ one has
  \begin{equation}
    \label{3}
    P_y^\treegraph[X_{H_{S_\treegraph(\rot,R)}}=z]  \leq \Big( \frac1{d-1}
      \Big)^{R-d_\treegraph(y,\rot)}.
  \end{equation}
\end{lemma}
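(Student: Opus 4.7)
The plan is to reduce the bound on the first-hit distribution to the unrestricted hitting probability of $z$, which is already determined by the Green function formula \eqref{1.1}.

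First I would observe the obvious inclusion of events: if the simple random walk starting at $y$ first meets the sphere $S_\treegraph(\rot,R)$ at the specific vertex $z$, then in particular $z$ is visited, so $H_z<\infty$. Therefore
\[
  P_y^\treegraph[X_{H_{S_\treegraph(\rot,R)}}=z] \;\leq\; P_y^\treegraph[H_z<\infty].
\]
This step discards no useful structure for the upper bound we seek.

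Next, I would use the standard identity relating the Green function and the hitting probability. By the strong Markov property applied at $H_z$ one has $g_\treegraph(y,z)=P_y^\treegraph[H_z<\infty]\cdot g_\treegraph(z,z)$, so together with \eqref{1.1} (which gives $g_\treegraph(y,z)=\frac{d-1}{d-2}(d-1)^{-d_\treegraph(y,z)}$ and $g_\treegraph(z,z)=\frac{d-1}{d-2}$) we obtain
\[
  P_y^\treegraph[H_z<\infty] \;=\; \Big(\frac{1}{d-1}\Big)^{d_\treegraph(y,z)}.
\]

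Finally I would close the argument by applying the triangle inequality. Since $z\in S_\treegraph(\rot,R)$ and $y\in B_\treegraph(\rot,R)$,
\[
  d_\treegraph(y,z) \;\geq\; d_\treegraph(\rot,z)-d_\treegraph(\rot,y) \;=\; R-d_\treegraph(y,\rot),
\]
and because $1/(d-1)<1$ the exponential $(1/(d-1))^{d_\treegraph(y,z)}$ is at most $(1/(d-1))^{R-d_\treegraph(y,\rot)}$, yielding \eqref{3}. There is no real obstacle here; the only point requiring (minimal) care is invoking the Green-function identity, which is standard for transient walks and is immediate from \eqref{1.1} since $g_\treegraph(z,z)=g_\treegraph(\rot,\rot)=\tfrac{d-1}{d-2}$.
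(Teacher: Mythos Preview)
Your proof is correct and considerably shorter than the one in the paper. The paper proceeds differently: it first argues, via the strong Markov property at the last common ancestor of $z$ and the geodesic through $y$, that the hitting distribution $z\mapsto P_y^\treegraph[X_{H_S}=z]$ is maximised at the unique point $y_R\in S_\treegraph(\rot,R)$ lying ``above'' $y$; it then uses symmetry over the $(d-1)^{R-d_\treegraph(y,\rot)}$ descendants of $y$ on the sphere to bound $P_y^\treegraph[X_{H_S}=y_R]$ by the reciprocal of that count. Your route instead relaxes $\{X_{H_S}=z\}\subseteq\{H_z<\infty\}$, reads off $P_y^\treegraph[H_z<\infty]=(d-1)^{-d_\treegraph(y,z)}$ from the Green function identity and \eqref{1.1}, and finishes with the triangle inequality. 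The paper's argument yields the extra information that the maximiser is the nearest sphere point, whereas yours gives the pointwise refinement $P_y^\treegraph[X_{H_S}=z]\le (d-1)^{-d_\treegraph(y,z)}$; for the lemma as stated both are adequate, and your argument is the more economical one.
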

\begin{proof}
  Note that the statement we need to prove only depends on the distance
  of the vertex $y$ to the centre of $B_\treegraph(\rot,R)$. We
  denote by $\rot=y_0,y_1,\ldots,y_R$ a fixed non-backtracking path
  from $\rot$ to $S_\treegraph(\rot,R) \eqqcolon S$, so that
  $d_\treegraph(y_k,\rot)=k$ for $k=0,\ldots,R$. First, we argue
  that
  $P_{y_k}^\treegraph[X_{H_S}=z] \leq P_{y_k}^\treegraph[X_{H_S}=y_R]$
  for all $z\in S$ and $k=0,\ldots,R$. Indeed, fix $z \in S$ and
  $k \in \{0,\ldots,R\}$ and let
  \begin{equation*}
    i_0 \coloneqq \max\{i \in \{0,\ldots,R\} \, | \, y_i \text{ is on the
        non-backtracking path from $\rot$ to $z$} \},
  \end{equation*}
  so that $y_{i_0}$ is the last common vertex of the two non-backtracking
  paths from $\rot$ to $z$ resp.~to $y_R$. Note that any path from
  $y_k$ to $z$ in $\treegraph$ has to pass through $y_{i_0}$ and also
  that
  $P_{y_{i_0}}^\treegraph[X_{H_S}=z]= P_{y_{i_0}}^\treegraph[X_{H_S}=y_R]$
  because $z,y_R \in S$ and
  $d_\treegraph(y_{i_0},z) = d_\treegraph(y_{i_0},y_R)$ by definition of
  $y_{i_0}$. As claimed, one obtains
  \begin{equation*}
    \begin{split}
      &P_{y_k}^\treegraph[X_{H_S}=z] = P_{y_k}^\treegraph[X_{H_S}=z, H_{y_{i_0}} \leq
        H_S] \overset{(*)}{=} P_{y_k}^\treegraph[H_{y_{i_0}} \leq H_S]
      P_{y_{i_0}}^\treegraph[X_{H_S}=z]  \\
      &= P_{y_k}^\treegraph[H_{y_{i_0}} \leq H_S] P_{y_{i_0}}^\treegraph[X_{H_S}=y_R]
      \overset{(*)}{=} P_{y_k}^\treegraph[X_{H_S}=y_R, H_{y_{i_0}} \leq H_S] \leq
      P_{y_k}^\treegraph[X_{H_S}=y_R],
    \end{split}
  \end{equation*}
  where in both $(*)$ we use the strong Markov property.

  It remains to show
  $P_{y_k}^\treegraph[X_{H_S}=y_R]  \leq (d-1)^{-(R-k)}$ for
  $k=0,\ldots,R$. To this end, let
  $A_k \coloneqq S_\treegraph(\rot, R) \cap U_{y_k}$ (see
    \eqref{830}). By definition we have $y_R \in A_k$ and
  $|A_k| = (d-1)^{R-k}$. Moreover, by  symmetry it holds
  $P_{y_k}^\treegraph[X_{H_S}=z] = P_{y_k}^\treegraph[X_{H_S}=y_R]$ for
  all $z\in A_k$. Hence
  \begin{equation*}
    1\ge P_{y_k}^\treegraph[X_{H_S} \in A_k]
    = \sum_{z\in A_k}P_{y_k}^\treegraph[X_{H_S}=z] =
    (d-1)^{R-k}P_{y_k}^\treegraph[X_{H_S}=y_R],
  \end{equation*}
  from which the required claim follows directly.
\end{proof}

\begin{proposition}
  \label{proposition2.1}
  For all $R\geq 1$ and $y \in B_\treegraph(\rot,R)$ one has
  \begin{equation}
    \label{2.1}
    \Var_{\mathbb P^\treegraph}  \Big(
      E_y^\treegraph[\varphi_\treegraph(X_{H_{S_\treegraph(\rot,R)}})] \Big)
    \leq \frac{d^2}{(d-1)(d-2)} \Big( \frac1{d-1} \Big)^{R-2
      d_\treegraph(y,\rot)}.
  \end{equation}
  Also, for all $n$ large enough, $x\in \finitegraph$ and
  $1 \leq R \leq \frac{c_0}{6} \log_{d-1}(N_n)$ with
  $\tx(B_\finitegraph(x,2R))=0$ and $y \in B_\finitegraph(x,R)$ one has
  \begin{equation}
    \label{2.2}
    \begin{split}
      \Var_{\mathbb P^\finitegraph}  \Big(
        E_y^\finitegraph[\Psi_\finitegraph(X_{H_{S_\finitegraph(x,R)}})]  \Big) \leq
      &\frac{3d^2}{(d-1)(d-2)} \Big( \frac1{d-1} \Big)^{R-2 d_\finitegraph(y,x)}.
    \end{split}
  \end{equation}
\end{proposition}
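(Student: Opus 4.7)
The plan is to expand each variance as a quadratic form in the Gaussian field, pull out the uniform hitting-probability bound from Lemma \ref{25}, and reduce the remaining sum of covariances over pairs of sphere vertices to an explicit combinatorial computation on the tree.

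For the tree bound \eqref{2.1}, abbreviate $S = S_\treegraph(\rot,R)$ and $m = d_\treegraph(y,\rot)$. Since $E_y^\treegraph[\varphi_\treegraph(X_{H_S})] = \sum_{z \in S} p_y(z)\varphi_\treegraph(z)$ with $p_y(z) = P_y^\treegraph[X_{H_S}=z]$ is a centred linear combination of the Gaussians $\varphi_\treegraph(z)$, its variance equals $\sum_{z,z' \in S} p_y(z)\,p_y(z')\,g_\treegraph(z,z')$. Since $p_y\geq 0$, Lemma \ref{25} allows me to pull out a factor $(d-1)^{-2(R-m)}$, reducing the problem to bounding $\sum_{z,z' \in S} g_\treegraph(z,z')$. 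To evaluate this I would fix $z' \in S$ and partition $z \in S$ according to the depth $k \in \{0,\ldots,R\}$ of the common ancestor of $z$ and $z'$ on the geodesic from $\rot$; counting sibling subtrees at each depth and using the explicit form \eqref{1.1} of $g_\treegraph$ yields the identity $\sum_{z \in S}(d-1)^{-d_\treegraph(z,z')} = d/(d-1)$ via a telescoping geometric sum, whence $\sum_{z,z' \in S} g_\treegraph(z,z') = |S|\cdot \frac{d}{d-2} = \frac{d^2(d-1)^{R-1}}{d-2}$, and multiplying by $(d-1)^{-2(R-m)}$ gives exactly the claimed $\frac{d^2}{(d-1)(d-2)}(d-1)^{-(R-2m)}$.

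For the finite-graph case \eqref{2.2} the strategy is parallel. Writing $S' = S_\finitegraph(x,R)$, the variance equals $\sum_{z,z' \in S'} p_y^\finitegraph(z)\,p_y^\finitegraph(z')\,G_\finitegraph(z,z')$, and the tree-excess hypothesis $\tx(B_\finitegraph(x,2R))=0$ is used in two ways. First, by Remark \ref{extraremark4} applied with $U = B_\finitegraph(x,R-1)$ the walk on $\finitegraph$ from $y$ killed on $S'$ is the $\pi_{n,x}$-image of the walk on $\treegraph$ from $\rho_{x,R}(y)$ killed on $S_\treegraph(\rot,R)$, so Lemma \ref{25} again gives $p_y^\finitegraph(z) \leq (d-1)^{-(R-d_\finitegraph(y,x))}$. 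Second, for $z,z' \in S'$ one has $d_\finitegraph(z,z') \leq 2R \leq (c_0/3)\log_{d-1}(N_n)$ for $n$ large, so \eqref{30} of Proposition \ref{proposition1.1} applies and lets me replace $G_\finitegraph(z,z')$ by the positive pointwise upper bound $3\tfrac{d-1}{d-2}(d-1)^{-d_\finitegraph(z,z')}$; this only increases the quadratic form because $p_y^\finitegraph \geq 0$. Moreover, any geodesic between two vertices of $S'$ has length at most $2R$ and therefore stays inside $B_\finitegraph(x,2R)$, which is tree-like and isomorphic to $B_\treegraph(\rot,2R)$ via $\rho_{x,2R}$, so pairwise $\finitegraph$-distances on $S'$ coincide with the corresponding tree distances and the resulting sum matches the tree computation up to the multiplicative factor $3$. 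Assembling everything yields $\frac{3d^2}{(d-1)(d-2)}(d-1)^{-(R-2m)}$.

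The main calculation is the combinatorial identity $\sum_{z \in S}(d-1)^{-d_\treegraph(z,z')} = d/(d-1)$: for each $k \in \{1,\ldots,R-1\}$ there are $(d-2)(d-1)^{R-k-1}$ vertices $z \in S$ whose common ancestor with $z'$ has depth exactly $k$, each at tree distance $2(R-k)$ from $z'$, so the geometric series in $(d-1)^k$ telescopes cleanly; the boundary cases $k=0$ (contributing $(d-1)^R$ vertices at distance $2R$) and $k=R$ (only $z=z'$) are handled separately. Everything else is a routine assembly of Lemma \ref{25}, Proposition \ref{proposition1.1} and the tree-excess hypothesis. A minor subtlety worth noting is that $G_\finitegraph$ need not be pointwise nonnegative; this is harmless because only an upper bound on the (nonnegative) quadratic form is required, and such a bound is supplied by the positive pointwise upper bound in \eqref{30}.
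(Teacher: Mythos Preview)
Your proposal is correct and follows essentially the same route as the paper: expand the variance as a quadratic form, apply Lemma \ref{25} to extract the factor $(d-1)^{-2(R-m)}$, reduce to the combinatorial identity $\sum_{z\in S}(d-1)^{-d_\treegraph(z,z')}=d/(d-1)$ (the paper parametrises by the distance $2j$ rather than by the depth of the common ancestor, but this is the same count), and then repeat on $\finitegraph$ using Remark \ref{extraremark4}, the bound \eqref{30}, and the tree-excess hypothesis to identify distances. Your remark that $G_\finitegraph$ need not be pointwise nonnegative but that a pointwise positive upper bound suffices for the nonnegative quadratic form is a worthwhile observation the paper leaves implicit; your phrasing ``any geodesic \ldots\ therefore stays inside $B_\finitegraph(x,2R)$'' could be tightened (argue that any path leaving $B_\finitegraph(x,2R)$ from a point of $S'$ and returning to $S'$ has length $>2R$), but the conclusion is correct.
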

\begin{proof}
  We start with \eqref{2.1}. Let us abbreviate
  $S \coloneqq S_\treegraph(\rot,R)$. We first expand the variance
  to obtain
  \begin{equation}
    \label{2.3}
    \begin{split}
      \Var_{\mathbb P^\treegraph}  \Big(
        E_y^\treegraph[\varphi_\treegraph(X_{H_S})]  \Big)
      &\overset{\phantom{\eqref{3}}}{=}  \sum_{z_1,z_2 \in S}
      P_y^\treegraph[X_{H_S}=z_1] P_y^\treegraph[X_{H_S}=z_2] g_\treegraph(z_1,z_2)
      \\
      &\stackrel[\eqref{1.1}]{\eqref{3}}{\leq} \frac{d-1}{d-2} \Big( \frac1{d-1}
        \Big)^{2(R-d_\treegraph(y,\rot))}  \sum_{z_1,z_2 \in S}  \Big(
        \frac1{d-1} \Big)^{d_\treegraph(z_1,z_2)} .
    \end{split}
  \end{equation}
  Fix $z_1 \in S$. Note that all vertices of $S$ are at even distance
  from $z_1$ and more precisely that in $S$
  \begin{equation}
    \label{6}
    \!\!\!\!
    \begin{cases}
      \text{there is one vertex  at distance $0$ from $z_1$ (namely $z_1$ itself),} \\
      \text{there are $(d-2)(d-1)^{j-1}$ vertices at distance $2j$ from $z_1$
        for $1 \leq   j \leq  R-1$,}                                      \\
      \text{there are $(d-1)^R$ vertices at distance $2R$ from $z_1$. }
    \end{cases}
  \end{equation}
  This implies that for fixed $z_1 \in S$ it holds
  \begin{align}
    \sum_{z_2 \in S} \Big( \frac1{d-1} \Big)^{d_\treegraph(z_1,z_2)} & =  1 +
    \sum_{j=1}^{R-1} (d-2)(d-1)^{j-1}   \Big( \frac1{d-1} \Big)^{2j}  +    (d-1)^R
    \Big( \frac1{d-1} \Big)^{2R}   \nonumber\\
    & =  1 + (d-2)\sum_{j=1}^{R-1}  \Big( \frac1{d-1} \Big)^{j+1} + \Big(
      \frac1{d-1} \Big)^R
    \label{2.4}\\
    & =1 + \Big( \frac1{d-1} \Big) \Big(1-\Big( \frac1{d-1} \Big)^{R-1} \Big)    +
    \Big( \frac1{d-1} \Big)^R  = \frac{d}{d-1}.   \nonumber
  \end{align}
  Since $|S| = d(d-1)^{R-1}$, we can combine \eqref{2.3} and \eqref{2.4} to obtain
  \begin{equation*}
    \Var_{\mathbb P^\treegraph}  \Big(
      E_y^\treegraph[\varphi_\treegraph(X_{H_S})]  \Big) \leq
    \frac{d-1}{d-2} \Big( \frac1{d-1} \Big)^{2(R-d_\treegraph(y,\rot))}
    d(d-1)^{R-1}  \frac{d}{d-1},
  \end{equation*}
  which is equal to the right hand side of \eqref{2.1} and concludes the
  proof of the first part.

  For the proof of \eqref{2.2} we proceed similarly. Let us abbreviate
  $S' \coloneqq S_\finitegraph(x,R)$ and note that $P_y^\finitegraph$-almost
  surely $H_{S'} = T_{B_\finitegraph(x,R-1)}$. Since by assumption we
  have $\tx(B_\finitegraph(x,2R))=0$, Remark \ref{extraremark4} implies
  that for every $z \in S'$ we have
  \begin{equation*}
    \begin{split}
      P_y^\finitegraph[X_{H_{S'}} = z]
      &\overset{\phantom{\eqref{extraequation21}}}{=}
      P_{\rho_{x,R}(y)}^\treegraph[X_{H_{\rho_{x,R}(S')}} = \rho_{x,R}(z)] \\
      &\overset{\eqref{3}}{\leq} \Big( \frac1{d-1}
        \Big)^{R-d_\treegraph(\rho_{x,R}(y),\rot)}  = \Big( \frac1{d-1}
        \Big)^{R-d_\finitegraph(y,x)}.
    \end{split}
  \end{equation*}
  Furthermore, for $n$ large enough, the inequality \eqref{30} in
  Proposition \ref{proposition1.1} applies to $G_\finitegraph(z_1,z_2)$
  with $z_1,z_2 \in S'$ since
  $d_\finitegraph(z_1,z_2) \leq 2R \leq \frac{c_0}{3} \log_{d-1}(N_n)$ by
  assumption on $R$. Therefore, by expanding the variance  we obtain
  similarly to \eqref{2.3} the inequality
  \begin{equation}
    \label{2.5}
    \Var_{\mathbb P^\finitegraph}  \Big(
      E_y^\finitegraph[\Psi_\finitegraph(X_{H_{S'}})]  \Big) \leq
    3\frac{d-1}{d-2}\Big( \frac1{d-1} \Big)^{2(R-d_\finitegraph(y,x))}
    \sum_{z_1,z_2 \in S'}  \Big( \frac1{d-1} \Big)^{d_\finitegraph(z_1,z_2)},
  \end{equation}
  assuming $n$ is large enough. We now argue that for fixed $z_1 \in S'$
  the vertices in $S'$ can be again characterised by \eqref{6}. Indeed,
  the assumption $\tx(B_\finitegraph(x,2R))=0$ implies that any shortest
  path from $z_1$ to some $z_2 \in  S'$ necessarily remains in
  $B_\finitegraph(x,R)$ for which $\tx(B_\finitegraph(x,R))=0$ holds.
  Therefore, $d_\finitegraph(z_1,z_2)$ can be computed by only
  considering the shortest connection in $B_\finitegraph(x,R)$ between
  $z_1$ and $z_2$ and so we are in the tree-like situation of \eqref{6}.
  Thus, the same computation as in \eqref{2.4} leads to
  $\sum_{z_2 \in S'}  \big( \frac1{d-1} \big)^{d_\finitegraph(z_1,z_2)}
  = \frac{d}{d-1}$.
  This combined with  \eqref{2.5} concludes the proof of \eqref{2.2}
  since $|S'| = d(d-1)^{R-1}$ as $\tx(B_\finitegraph(x,R))=0$.
\end{proof}

We  now have all the ingredients for the proof of Theorem
\ref{extendedtheorem2.2}, by which we conclude Section
\ref{subsectionapproximation}.

\begin{proof}[Proof of Theorem \ref{extendedtheorem2.2}]
  Let us abbreviate
  $V\coloneqq B_\finitegraph(x,r) \cup B_\finitegraph(x',r)$. Under the
  assumptions of the theorem we can apply Lemma \ref{lemma1.2} with
  $U \coloneqq B_\finitegraph(x,R-1) \cup B_\finitegraph(x',R-1) \supseteq V$.
  Thus we obtain a coupling $\mathbb Q_n$  of $\Psi_\finitegraph$ and
  $\varphi_\treegraph$ such that for all $\varepsilon>0$
  \begin{align}
    & \mathbb Q_n \bigg[ \sup_{y \in V} \big| \Psi_\finitegraph(y) -
      \varphi_\treegraph(\rho_{x,x',R}(y))  \big|  > \varepsilon \bigg]
    \label{227}                                                       \\
    & \leq  \mathbb Q_n \bigg[ \sup_{y \in V}  \Big|
      E_{\rho_{x,x',R}(y)}^\treegraph[ \varphi_\treegraph(X_{T_{\rho_{x,x',R}(U)}})
        ]\Big| > \frac{\varepsilon}2 \bigg]  +  \mathbb Q_n \bigg[ \sup_{y \in V}
      \Big| E_y^\finitegraph[ \Psi_\finitegraph(X_{T_U})] \Big| > \frac{\varepsilon}2
      \bigg] ,  \nonumber
  \end{align}
  where $\rho_{x,x',R}(U)  = B_\treegraph(\rot,R-1) \cup
  B_\treegraph(z_{x,x'},R-1) \subseteq B_\treegraph(\rot,2R) \cup
  B_\treegraph(z_{x,x'},2R)$.
  We now consider the two terms on the right hand side of \eqref{227}
  separately. For the first term a union bound leads to, abbreviating
  $S\coloneqq S_\treegraph(\rot,R)$ and
  $S' \coloneqq S_\treegraph(z_{x,x'},R)$,
  \begin{equation}
    \label{228}
    \begin{split}
      &\mathbb Q_n \bigg[ \sup_{y \in V}  \Big| E_{\rho_{x,x',R}(y)}^\treegraph[
          \varphi_\treegraph(X_{T_{\rho_{x,x',R}(U)}}) ]\Big| > \frac{\varepsilon}2
        \bigg] \\
      & = \mathbb P^\treegraph \bigg[ \sup_{y \in B_\treegraph(\rot,r) \cup
          B_\treegraph(z_{x,x'},r)}  \Big| E_y^\treegraph[ \varphi_\treegraph(X_{H_{S
                \cup S' }}) ]\Big| > \frac{\varepsilon}2 \bigg] \\
      & \leq  \sum_{y \in B_\treegraph(\rot,r)}  \!\!\!\!  \mathbb
      P^\treegraph \bigg[   \Big| E_y^\treegraph[ \varphi_\treegraph(X_{H_S}) ]\Big|
        > \frac{\varepsilon}2 \bigg] + \!\!\!\!  \sum_{y \in B_\treegraph(z_{x,x'},r)}
      \!\!\!\!  \mathbb P^\treegraph \bigg[   \Big| E_y^\treegraph[
          \varphi_\treegraph(X_{H_{S'}}) ]\Big| > \frac{\varepsilon}2 \bigg]   \\
      &  =   2  \!\!\!\!  \sum_{y \in B_\treegraph(\rot,r)}  \!\!\!\!  \mathbb
      P^\treegraph \bigg[   \Big| E_y^\treegraph[ \varphi_\treegraph(X_{H_S}) ]\Big|
        > \frac{\varepsilon}2 \bigg],
    \end{split}
  \end{equation}
  where the last equality follows by symmetry. Now for each
  $y \in B_\treegraph(\rot,r)$ the expectation appearing inside the
  probability on the right hand side of \eqref{228} is a centred Gaussian
  variable with respect to $\mathbb P^\treegraph$. Thus the exponential
  Markov inequality implies that
  \begin{align}
    2  \!\!\!\! & \sum_{y \in B_\treegraph(\rot,r)}  \!\!\!\!  \mathbb
    P^\treegraph \bigg[   \Big| E_y^\treegraph[ \varphi_\treegraph(X_{H_S}) ]\Big|
      > \frac{\varepsilon}2 \bigg] \leq 4 \!\!\!\! \sum_{y \in
      B_\treegraph(\rot,r)}   \!\!\!\! \exp\bigg(-\frac{(\varepsilon/2)^2}{2
        \Var_{\mathbb P^\treegraph} \big(
          E_y^\treegraph[\varphi_\treegraph(X_{H_S})]\big)} \bigg)  \nonumber                   \\
    & \qquad \overset{\eqref{2.1}}{\leq} 4 \big| B_\treegraph(\rot,r) \big|
    \exp\bigg(-\frac{\varepsilon^2 (d-1)(d-2)}{8 d^2}(d-1)^{R-2r} \bigg).
  \end{align}
  For the second term on the right hand side of \eqref{227} we similarly
  have by a union bound that, abbreviating
  $\overline{S}\coloneqq S_\finitegraph(x,R)$ and
  $\overline{S}' \coloneqq S_\finitegraph(x',R)$,
  \begin{equation}
    \label{230}
    \begin{split}
      &\mathbb Q_n \bigg[ \sup_{y \in V} \Big| E_y^\finitegraph[
          \Psi_\finitegraph(X_{T_U})] \Big| > \frac{\varepsilon}2 \bigg] = \mathbb
      P^\finitegraph \bigg[ \sup_{y \in V} \Big| E_y^\finitegraph[
          \Psi_\finitegraph(X_{H_{\overline{S} \cup \overline{S}'}})] \Big| >
        \frac{\varepsilon}2 \bigg]  \\
      & \quad \leq   \!\!\!\!  \sum_{y \in B_\finitegraph(x,r)}  \!\!\!\!  \mathbb
      P^\finitegraph \bigg[ \Big| E_y^\finitegraph[
          \Psi_\finitegraph(X_{H_{\overline{S}}}) ]\Big| > \frac{\varepsilon}2 \bigg] +
      \!\!\!\!  \sum_{y \in B_\finitegraph(x',r)}   \!\!\!\! \mathbb P^\finitegraph
      \bigg[  \Big| E_y^\finitegraph[ \Psi_\finitegraph(X_{H_{\overline{S}'}})
          ]\Big| > \frac{\varepsilon}2 \bigg].
    \end{split}
  \end{equation}
  The expectations appearing inside the probabilities on the right hand
  side of \eqref{230} are centred Gaussian variables with respect to
  $\mathbb P^\finitegraph$. By \eqref{2.2} their variance can be bounded
  by $\frac{3d^2}{(d-1)(d-2)} ( \frac1{d-1})^{R-2r}$. Hence the
  exponential Markov inequality implies  that
  \begin{equation}
    \label{231}
    \begin{split}
      &\sum_{y \in B_\finitegraph(x,r)}  \!\!\!\!  \mathbb P^\finitegraph \bigg[
        \Big| E_y^\finitegraph[ \Psi_\finitegraph(X_{H_{\overline{S}}}) ]\Big| >
        \frac{\varepsilon}2 \bigg] +  \!\!\!\! \sum_{y \in B_\finitegraph(x',r)}
      \!\!\!\!  \mathbb P^\finitegraph \bigg[   \Big| E_y^\finitegraph[
          \Psi_\finitegraph(X_{H_{\overline{S}'}}) ]\Big| > \frac{\varepsilon}2 \bigg] \\
      &\quad \leq 2 \Big( \big| B_\finitegraph(x,r) \big| + \big|
        B_\finitegraph(x',r) \big| \Big) \exp\bigg(-\frac{\varepsilon^2 (d-1)(d-2)}{24
          d^2}(d-1)^{R-2r} \bigg)  .
    \end{split}
  \end{equation}
  The combination of \eqref{227}--\eqref{231} concludes the proof  of
  Theorem \ref{extendedtheorem2.2} since
  $|  B_\finitegraph(x,r) | =  | B_\finitegraph(x',r) |
  =   | B_\treegraph(\rot,r) |
  =\frac{d(d-1)^{r}-2}{d-2} \leq d(d-1)^{r}$
  as $\tx(B_\finitegraph(x,r))= \tx(B_\finitegraph(x',r))=0$ by assumption.
\end{proof}



\subsection{Conditional distribution of the zero-average Gaussian free field}
\label{sectionconditionaldistribution}

In this section we investigate the conditional distributions of the
zero-average Gaussian free field. Their detailed understanding  will be
needed in Section~\ref{subcriticalsection} to control the behaviour of
the exploration process used in the proof of the main subcritical
result~\eqref{0.6}. We start with the exact computation of the
conditional distribution of $\Psi_\finitegraph(x)$ for $x\in \finitegraph$
given $\Psi_\finitegraph$ on some $A\subsetneq \finitegraph$ (Lemma
  \ref{conditionaldistribution}). We  then see that, under certain
geometric conditions on $x$ and $A$ (see \eqref{485}--\eqref{473}),   the
conditional distribution of $\Psi_\finitegraph(x)$ given
$\Psi_\finitegraph$ on $A\subsetneq \finitegraph$ shows strong
similarities with the conditional distribution of the Gaussian free field
$\varphi_\treegraph$ on $\treegraph$ (Proposition \ref{535}, see also
  \eqref{837}). This feature reflects the general philosophy that the
local picture of $\Psi_\finitegraph$ on $\finitegraph$ is given by
$\varphi_\treegraph$ on $\treegraph$.

\begin{lemma}
  \label{conditionaldistribution}
  Let $A \subsetneq \finitegraph$ non-empty and $x\in \finitegraph$. Then
  $\mathbb P^\finitegraph$-almost surely
  \begin{equation}
    \label{449}
    \mathbb E^\finitegraph  \big[\Psi_\finitegraph(x)  \big|
      \sigma(\Psi_\finitegraph(y), y\in A)  \big]   = E_x^\finitegraph[
      \Psi_\finitegraph(X_{H_A}) ]
    - \frac{E_x^\finitegraph[H_A]}{E_\pi^\finitegraph[H_A]
    }E_\pi^\finitegraph[\Psi_\finitegraph(X_{H_A})]
  \end{equation}
  and
  \begin{equation}
    \label{456}
    \begin{split}
      &\Var_{\mathbb P^\finitegraph} \big(\Psi_\finitegraph(x)  \big|
        \sigma(\Psi_\finitegraph(y), y\in A)  \big)    \\
      &\qquad \qquad \qquad= G_\finitegraph(x,x) -
      E_x^\finitegraph[G_\finitegraph(X_{H_A},x)] +
      \frac{E_x^\finitegraph[H_A]}{E_\pi^\finitegraph[H_A]
      }E_\pi^\finitegraph[G_\finitegraph(X_{H_A},x)].
    \end{split}
  \end{equation}
  Here $E_\pi^\finitegraph$ is the expectation with respect to
  $\frac1{N_n} \sum_{z \in \finitegraph}P_z^\finitegraph$, i.e.~the
  canonical law of simple random walk on $\finitegraph$ starting at a
  uniformly chosen vertex.
\end{lemma}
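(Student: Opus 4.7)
The plan is to exploit the fact that $(\Psi_\finitegraph(y))_{y\in A\cup\{x\}}$ is a centred jointly Gaussian vector under $\mathbb P^\finitegraph$, so that the conditional expectation in \eqref{449} coincides with the $L^2(\mathbb P^\finitegraph)$-orthogonal projection of $\Psi_\finitegraph(x)$ onto the linear span $\mathcal H_A := \text{span}\{\Psi_\finitegraph(y) : y\in A\}$, and the conditional variance in \eqref{456} equals the squared $L^2$-distance of $\Psi_\finitegraph(x)$ from $\mathcal H_A$.

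First I would introduce the two candidates $Z := E_x^\finitegraph[\Psi_\finitegraph(X_{H_A})] = \sum_{y\in A} P_x^\finitegraph[X_{H_A}=y]\Psi_\finitegraph(y)$ and $Z_\pi := E_\pi^\finitegraph[\Psi_\finitegraph(X_{H_A})]$, both of which clearly lie in $\mathcal H_A$ (since $A$ is a proper subset, $E_\pi^\finitegraph[H_A]>0$ and dividing by it is legitimate). My candidate for the projection is $Z' := Z - \frac{E_x^\finitegraph[H_A]}{E_\pi^\finitegraph[H_A]} Z_\pi$. To verify $\mathbb E^\finitegraph[\Psi_\finitegraph(x)\mid\sigma(\Psi_\finitegraph(y),y\in A)] = Z'$, it suffices to check that $\mathbb E^\finitegraph[(\Psi_\finitegraph(x)-Z')\Psi_\finitegraph(z)] = 0$ for every $z\in A$.

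The key input is \eqref{1.7} applied to $U := \finitegraph\setminus A$, so that $T_U = H_A$; since $z\in A$ means $z\notin U$, the killed Green function $g_\finitegraph^U(\cdot,z)$ vanishes identically, and \eqref{1.7} simplifies to
\begin{equation*}
G_\finitegraph(x',z) = E_{x'}^\finitegraph[G_\finitegraph(X_{H_A},z)] - \tfrac{1}{N_n}E_{x'}^\finitegraph[H_A]\qquad\text{for all }x'\in\finitegraph,\ z\in A.
\end{equation*}
Specialising to $x'=x$ gives $\mathbb E^\finitegraph[(\Psi_\finitegraph(x)-Z)\Psi_\finitegraph(z)] = -\tfrac{1}{N_n}E_x^\finitegraph[H_A]$. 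The crucial step is then to \emph{average} the identity over $x'\in\finitegraph$ and invoke the zero-average property $\sum_{x'\in\finitegraph} G_\finitegraph(x',z) = 0$ (which characterises $G_\finitegraph$); this produces
\begin{equation*}
E_\pi^\finitegraph[G_\finitegraph(X_{H_A},z)] = \tfrac{1}{N_n}E_\pi^\finitegraph[H_A].
\end{equation*}
Using this to compute $\mathbb E^\finitegraph[Z_\pi \Psi_\finitegraph(z)]$, the two contributions combine so that $\mathbb E^\finitegraph[(\Psi_\finitegraph(x)-Z')\Psi_\finitegraph(z)]=0$, which proves \eqref{449}.

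For \eqref{456} I would use the general identity $\Var_{\mathbb P^\finitegraph}(\Psi_\finitegraph(x)\mid \sigma(\Psi_\finitegraph(y),y\in A)) = G_\finitegraph(x,x) - \mathbb E^\finitegraph[\Psi_\finitegraph(x)Z']$, valid because $Z'$ is the orthogonal projection. Expanding $\mathbb E^\finitegraph[\Psi_\finitegraph(x)Z']$ by linearity and identifying the two resulting summands as $E_x^\finitegraph[G_\finitegraph(X_{H_A},x)]$ and $\tfrac{E_x^\finitegraph[H_A]}{E_\pi^\finitegraph[H_A]}E_\pi^\finitegraph[G_\finitegraph(X_{H_A},x)]$ gives \eqref{456} directly. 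The main obstacle is conceptual rather than computational: one has to guess the right correction term $-\tfrac{E_x^\finitegraph[H_A]}{E_\pi^\finitegraph[H_A]}Z_\pi$, which is absent from the analogous formula for $\varphi_\treegraph$ on the transient tree and is forced precisely by the zero-average constraint reflected in the $-\tfrac{1}{N_n}E_x^\finitegraph[H_A]$ term of \eqref{1.7}. Once this correction is in hand, everything reduces to the averaged form of \eqref{1.7}, which is a direct consequence of $\sum_{x'}G_\finitegraph(x',z)=0$.
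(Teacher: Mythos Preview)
Your proof is correct and takes a genuinely different, more direct route than the paper's. You verify the projection identity by checking orthogonality of $\Psi_\finitegraph(x)-Z'$ to each $\Psi_\finitegraph(z)$, $z\in A$, using only \eqref{1.7} together with its $\pi$-average (where the zero-average property of $G_\finitegraph$ kills the left side); the variance then drops out of the projection formula $\Var = G_\finitegraph(x,x) - \mathbb E^\finitegraph[\Psi_\finitegraph(x)Z']$. The paper instead decomposes $\Psi_\finitegraph(x) = \varphi^U_\finitegraph(x) + E_x^\finitegraph[\Psi_\finitegraph(X_{H_A})]$, rewrites the $\sigma$-algebra as $\sigma\big(\sum_{z\in U}\varphi^U_\finitegraph(z),\ \Psi_\finitegraph(y)-\Psi_\finitegraph(x_0),\ y\in A\big)$ via the zero-average constraint, proves that $\varphi^U_\finitegraph$ is independent of the increments $\Psi_\finitegraph(y)-\Psi_\finitegraph(x_0)$, and thereby reduces to a bivariate Gaussian conditioning on the single scalar $\sum_{z\in U}\varphi^U_\finitegraph(z)$. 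Your argument is shorter and purely computational; the paper's route is more structural in that it explains \emph{why} the correction term appears (all the dependence of $\varphi^U_\finitegraph$ on $\sigma(\Psi_\finitegraph(y),y\in A)$ is carried by a single linear functional), which may be useful intuition elsewhere but is not needed for the lemma itself.
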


\begin{proof}
  We will abbreviate
  $U\coloneqq \finitegraph \setminus A \subsetneq \finitegraph$. In
  particular $T_U=H_A$. Note that by \eqref{450} one can write
  $\Psi_\finitegraph(x) = \varphi^U_\finitegraph(x) + E_x^\finitegraph [\Psi_\finitegraph(X_{H_A})]$,
  the second term actually being $\sigma(\Psi_\finitegraph(y), y\in A)$-measurable.
  Hence
  $\mathbb E^\finitegraph  \big[\Psi_\finitegraph(x)  \big|
    \sigma(\Psi_\finitegraph(y), y\in A)  \big]
  = E_x^\finitegraph[ \Psi_\finitegraph(X_{H_A}) ]
  + \mathbb E^\finitegraph \big[\varphi^U_\finitegraph(x) \big|
    \sigma(\Psi_\finitegraph(y), y\in A) \big]$
  and moreover  also
  $\Var_{\mathbb P^\finitegraph} \big(\Psi_\finitegraph(x)
    \big|  \sigma(\Psi_\finitegraph(y), y\in A) \big)
  = \Var_{\mathbb P^\finitegraph}  \big(\varphi^U_\finitegraph(x)
    \big| \sigma(\Psi_\finitegraph(y), y\in A)  \big)$.
  For \eqref{449} it is therefore enough to show that
  $\mathbb P^\finitegraph$-almost surely
  \begin{equation}
    \label{455}
    \mathbb E^\finitegraph \big[\varphi^U_\finitegraph(x) \, \big| \,
      \sigma(\Psi_\finitegraph(y), y\in A)  \big] = -
    \frac{E_x^\finitegraph[T_U]}{E_\pi^\finitegraph[T_U]
    }E_\pi^\finitegraph[\Psi_\finitegraph(X_{T_U})].
  \end{equation}
  On the other hand, for \eqref{456} it is enough to show (use
    \eqref{1.7} to manipulate the first two terms on the right hand side
    of \eqref{456})
  \begin{equation}
    \label{811}
    \Var_{\mathbb P^\finitegraph}  \big(\varphi^U_\finitegraph(x)  \big|
      \sigma(\Psi_\finitegraph(y), y\in A)  \big) = g^U_\finitegraph(x,x) +
    \frac{E_x^\finitegraph[T_U]}{E_\pi^\finitegraph[T_U] }
    \Big(E_\pi^\finitegraph[G_\finitegraph(X_{T_U},x)]  -\frac{E_\pi [T_U]}{N_n}
      \Big).
  \end{equation}
  Let us fix $x_0 \in A$.  We claim that
  \begin{equation}
    \label{453}
    \sigma(\Psi_\finitegraph(y), y\in A) = \sigma \Big( \textstyle \sum_{z\in U}
      \varphi_\finitegraph^U(z), \Psi_\finitegraph(y)-\Psi_\finitegraph(x_0), y\in A
      \Big).
  \end{equation}
  To see \eqref{453} first note that
  $\sigma(\Psi_\finitegraph(y), y\in A)
  = \sigma (\Psi_\finitegraph(x_0), \Psi_\finitegraph(y)-\Psi_\finitegraph(x_0), y\in A)$.
  Moreover, by the zero-average property of $\Psi_\finitegraph$ (see
    below \eqref{1.7}), one  $\mathbb P^\finitegraph$-almost surely has
  \begin{align*}
    \Psi_\finitegraph(x_0)
    &\overset{\phantom{\eqref{450}}}{=} -\frac{1}{N_n}\sum_{z\in
      \finitegraph}(\Psi_\finitegraph(z) - \Psi_\finitegraph(x_0)) \\
    &\overset{\eqref{450}}{=} -\frac{1}{N_n}\sum_{z\in \finitegraph}(\varphi^U_\finitegraph(z) +
      E_z^\finitegraph[\Psi_\finitegraph(X_{T_U})]- \Psi_\finitegraph(x_0) )  \\
    &\overset{\phantom{\eqref{450}}}{=} -\frac{1}{N_n}\sum_{z\in U} \varphi^U_\finitegraph(z)  -\frac{1}{N_n}\sum_{z\in \finitegraph}
    E_z^\finitegraph[\Psi_\finitegraph(X_{T_U}) -\Psi_\finitegraph(x_0)].
  \end{align*}
  The latter sum is
  $\sigma(\Psi_\finitegraph(y)-\Psi_\finitegraph(x_0), y \in A)$-measurable.
  Thus
  $\sigma (\Psi_\finitegraph(x_0), \Psi_\finitegraph(y)-\Psi_\finitegraph(x_0), y\in A )
  = \sigma \big( \textstyle \sum_{z\in U} \varphi_\finitegraph^U(z),
    \Psi_\finitegraph(y)-\Psi_\finitegraph(x_0), y\in A \big)$,
  which shows \eqref{453}.

  Now note that
  \begin{equation}
    \label{457}
    \parbox{11.5cm}{
      for $z\in \finitegraph$ and $y \in A$ the Gaussian random variables
      $\varphi^U_\finitegraph(z)$ and $\Psi_\finitegraph(y)-\Psi_\finitegraph(x_0)$
      are independent.
    }
  \end{equation}
  Indeed,
  $\mathbb E^\finitegraph
  \big[\varphi^U_\finitegraph(z)(\Psi_\finitegraph(y)-\Psi_\finitegraph(x_0))
    \big]
  =G_\finitegraph(z,y) - E_z^\finitegraph[G_\finitegraph(X_{T_U},y)] -
  G_\finitegraph(z,x_0) + E_z^\finitegraph[G_\finitegraph(X_{T_U},x_0)]$
  by \eqref{450} and \eqref{0.1}, which is equal to
  $g^U_\finitegraph(z,y) - g^U_\finitegraph(z,x_0)=0$  by \eqref{1.7} and
  \eqref{1.6} (since $y,x_0 \notin U$).

  Recall that for random variables $U,Y,Z$ such that $U$ is integrable
  and $Z$ is independent of $\sigma(U,Y)$ one has
  $\mathbb E[U | \sigma(Y,Z)] = \mathbb E[U| \sigma(Y)]$ almost surely
  (see e.g.~\cite{41}, 9.7(k)). Hence we get
  $\mathbb E^\finitegraph \big[\varphi^U_\finitegraph(x)
    \big| \sigma(\Psi_\finitegraph(y), y\in A)  \big]
  = \mathbb E^\finitegraph \big[\varphi^U_\finitegraph(x) \big|
    \sigma(\textstyle \sum_{z\in U} \varphi_\finitegraph^U(z) )  \big]$
  $\mathbb P^\finitegraph$-almost surely  by \eqref{453} and \eqref{457}.
  Due to the general formula
  $\Var(X|\sigma(Y)) = \mathbb E[X^2|\sigma(Y)] - \mathbb E[X|\sigma(Y)]^2$,
  the same observation also shows that
  $\Var_{\mathbb P^\finitegraph} \big(\varphi^U_\finitegraph(x) \big|
    \sigma(\Psi_\finitegraph(y), y\in A)  \big)
  = \Var_{\mathbb P^\finitegraph}  \big(\varphi^U_\finitegraph(x)  \big|
    \sigma( \textstyle \sum_{z\in U} \varphi_\finitegraph^U(z) )   \big)$.
  Therefore, the conditional expectation/variance to be considered in
  \eqref{455} and \eqref{811} are actually only with respect to the
  sigma-algebra generated by the single Gaussian random variable
  $\textstyle \sum_{z\in U} \varphi_\finitegraph^U(z)$. So  by the
  formula  for conditional expectation/variance of the bivariate centred
  Gaussian distribution we have
  \begin{equation}
    \label{454}
    \begin{split}
      \mathbb E^\finitegraph \big[\varphi^U_\finitegraph(x) \, \big| \,
        \sigma(\Psi_\finitegraph(y), y\in A)  \big]
      &= \frac{\mathbb E^\finitegraph \big[\varphi^U_\finitegraph(x)   \textstyle
          \sum_{z\in U} \varphi_\finitegraph^U(z)  \big] }{\mathbb E^\finitegraph \big[
          \big(\textstyle\sum_{z\in U} \varphi_\finitegraph^U(z) \big)^2 \big] }
      \sum_{z\in U} \varphi_\finitegraph^U(z),    \\
      \Var_{\mathbb P^\finitegraph}  \big(\varphi^U_\finitegraph(x)  \big|
        \sigma(\Psi_\finitegraph(y), y\in A)  \big)
      &=  \mathbb E^\finitegraph \big[\varphi^U_\finitegraph(x) ^2  \big] -
      \frac{\mathbb E^\finitegraph \big[\varphi^U_\finitegraph(x)    \textstyle
          \sum_{z\in U} \varphi_\finitegraph^U(z)  \big]^2 }{\mathbb E^\finitegraph \big[
          \big(\textstyle\sum_{z\in U} \varphi_\finitegraph^U(z) \big)^2 \big] }.
    \end{split}
  \end{equation}
  We observe that for $u \in \finitegraph$ one has
  $\sum_{z\in U}  \mathbb E^\finitegraph \big[\varphi^U_\finitegraph(u) \varphi_\finitegraph^U(z)  \big]
  =  \sum_{z\in U} g^U_\finitegraph(u,z)  =   E^\finitegraph_u[T_U]$
  by \eqref{460} and \eqref{1.6}. By applying this and \eqref{1.18}
  inside \eqref{454} we obtain
  \begin{equation}
    \label{812}
    \begin{split}
      \mathbb E^\finitegraph \big[\varphi^U_\finitegraph(x) \, \big| \,
        \sigma(\Psi_\finitegraph(y), y\in A)  \big]
      &= \frac{E_x^\finitegraph [T_U] }{\sum_{z\in U}  E_z^\finitegraph [T_U] }
      \sum_{z\in U} \varphi_\finitegraph^U(z),    \\
      \Var_{\mathbb P^\finitegraph}  \big(\varphi^U_\finitegraph(x)  \big|
        \sigma(\Psi_\finitegraph(y), y\in A)  \big)
      &= g^U_\finitegraph(x,x) -  \frac{E_x^\finitegraph [T_U]^2 }{\sum_{z\in U}
        E_z^\finitegraph [T_U] }.
    \end{split}
  \end{equation}
  We are almost done. Observe that by \eqref{460}, \eqref{450} and the
  zero-average property of $\Psi_\finitegraph$ it $\mathbb P^\finitegraph$-almost
  surely  holds
  \begin{align*}
    \sum_{z\in U} \varphi_\finitegraph^U(z) =  \sum_{z\in \finitegraph} \varphi_\finitegraph^U(z)
    =  \sum_{z\in \finitegraph}  \big(\Psi_\finitegraph(z) - E_z^\finitegraph[
        \Psi_\finitegraph(X_{T_U}) ] \big)
    = -\sum_{z\in \finitegraph} E_z^\finitegraph[ \Psi_\finitegraph(X_{T_U}) ].
  \end{align*}
  This combined with \eqref{812} shows \eqref{455}. On the other hand, by
  the formula above \eqref{812}, \eqref{1.7} and the zero-average
  property of $G_\finitegraph(\cdot,\cdot)$  (see below  \eqref{1.7}) one
  has
  \begin{align*}
    -\frac{E_x^\finitegraph[T_U]}{N_n}
    &= -\frac{1}{N_n}\sum_{z\in U} g_\finitegraph^U(z,x)
    =  -\frac{1}{N_n}\sum_{z\in \finitegraph} g_\finitegraph^U(z,x)   \\
    &= \frac{1}{N_n}\sum_{z\in \finitegraph} \big(
      E_z^\finitegraph [G_\finitegraph(X_{T_U}),x] - \frac{1}{N_n}E_z^\finitegraph[T_U]
      -G_\finitegraph(z,x) \big)   \\
    &= E_\pi^\finitegraph[G_\finitegraph(X_{T_U},x)]  -\frac{E_\pi [T_U]}{N_n}.
  \end{align*}
  This combined with \eqref{812} shows \eqref{811} and concludes the
  proof of Lemma \ref{conditionaldistribution}.
\end{proof}

Lemma \ref{conditionaldistribution} above shows that for any
$x\in \finitegraph$,  $\Psi_\finitegraph(x)$ conditionally on
$(\Psi_\finitegraph(y))_{y\in A}$ for $A\subsetneq \finitegraph$
non-empty is a Gaussian random variable with mean and variance given by
the right hand sides of \eqref{449} and \eqref{456}. Comparable (but
  easier) statements for the Gaussian free field $\varphi_\treegraph$ on
$\treegraph$ follow directly from \eqref{1.18}. In particular, if
$x'\in \treegraph$ and $A' \coloneqq \treegraph \setminus U_{x'}$ (recall
  definition \eqref{830}), then by \eqref{2544} and \eqref{2545} one has
\begin{equation}
  \label{837}
  \begin{split}
    \mathbb E^\treegraph \big[\varphi_\treegraph(x')  \big|
      \sigma(\varphi_\treegraph(y), y\in A') \big] &=  \tfrac1{d-1}
    \varphi_\treegraph(\overline x'),   \\
    \Var_{\mathbb P^\treegraph}  \big(\varphi_\treegraph(x')  \big|
      \sigma(\varphi_\treegraph(y), y\in A') \big) &= \tfrac{d}{d-1}.
  \end{split}
\end{equation}
As we will show in Proposition \ref{535} below, a similar behaviour can
be observed for the zero-average Gaussian free field $\Psi_\finitegraph$
on $\finitegraph$, at least in specific situations. We  now introduce the
requirements on $x \in \finitegraph$ and $A\subseteq \finitegraph$.
Define for $A \subsetneq \finitegraph$ non-empty and $r\geq 1$  the set
$B_\finitegraph(A,r) \coloneqq \{z \in \finitegraph \,
  | \,  z \in B_\finitegraph(w,r) \text{ for some } w \in A \}$.
Moreover, for  $x \in \partial_\finitegraph A$ we set
\begin{equation*}
  F_A(x,r) \coloneqq \{z \in B_\finitegraph(A,r)
    \setminus A \, | \,  \text{$z$ is connected to $x$ in $B_\finitegraph(A,r)
      \setminus A$} \}.
\end{equation*}
In particular $x \in F_A(x,r)$. We set
\begin{equation}
  \label{471}
  s_n \coloneqq \max \{1 \, , \,  \lfloor  8 \log_{d-1}(\log_{d-1}(N_n)) \rfloor
    \} \quad \text{for $n\geq 1$}.
\end{equation}
and say that $x \in \partial_\finitegraph A$ is a \emph{good vertex at
  the boundary of $A$} if the following properties hold
\begin{flalign}
  \quad\bullet& \ \, |B_\finitegraph(x,1) \cap A| = 1, \text{ write
    $\overline x \in A$ for the unique vertex in this intersection}  \nonumber
  \\&\ \, \text{(note that for $x'\in \treegraph$ the notation
      $\overline x'$ has been   defined above \eqref{830})}
  \label{485}   \\
  \quad\bullet& \ \, \text{$\tx(F_A(x,s_n)) = 0$}
  \label{474}
  \\
  \quad\bullet& \ \, \text{for all $y \in \partial_\finitegraph A
    \setminus  \{x\}$ every path in $\finitegraph  \setminus  A$ from $y$ to
    $x$ leaves $B_\finitegraph(A,s_n)$.}
  \label{473}
\end{flalign}
Equivalently, $F_A(x,s_n)$ is \emph{proper} in the notation of \cite{21}
(see Figure~\ref{fig:goodvertex} for an illustration of the conditions
  \eqref{485}--\eqref{473}).

\begin{figure}[H]
  \begin{center}
    \includegraphics[width=7cm]{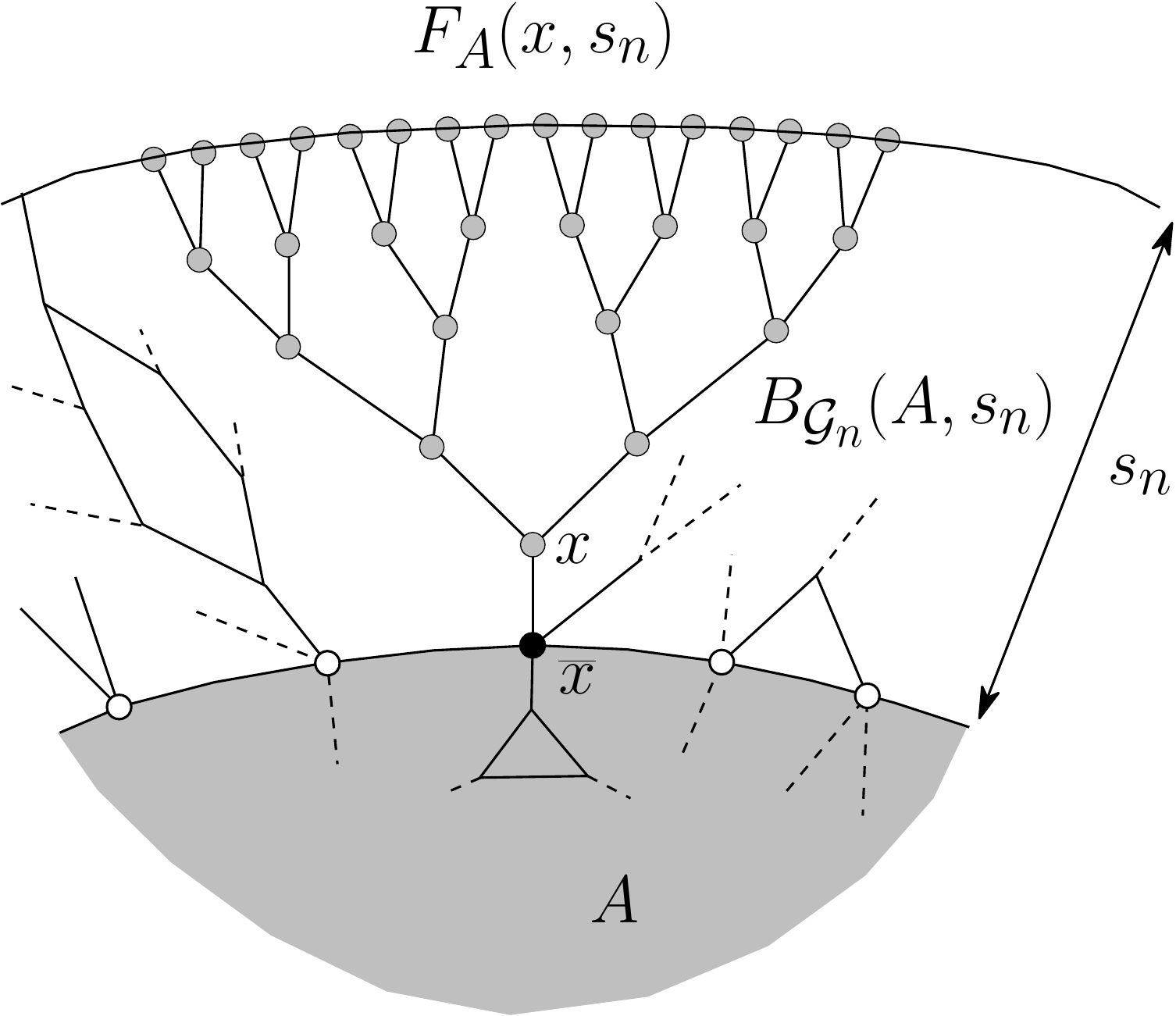}
    \caption{(adapted from \cite{21}) The point
      $x \in \partial_\finitegraph A$ is a good vertex at the boundary of
      $A$, the points in $F_A(x,s_n)$ are marked grey.}
    \label{fig:goodvertex}
  \end{center}
\end{figure}

\noindent For $A\subsetneq \finitegraph$ non-empty we set
\begin{equation}
  \label{850}
  G_A \coloneqq \{ \text{good vertices at the boundary of $A$} \}.
\end{equation}
We are now ready to state Proposition \ref{535}. Observe the analogies
between its statement and \eqref{837}.

\begin{proposition}
  \label{535}
  For every $b,b'>0$ there exists $c_{b,b'}>0$ such that for $n\geq 1$,
  $A \subseteq \finitegraph$ non-empty with $|A| \leq b \ln(N_n)$,
  $x\in G_A$ and on the event
  $\big\{ \sup_{z\in A} | \Psi_\finitegraph(z) | \leq b' \sqrt{\ln(N_n)} \big \}$
  it holds
  \begin{align}
    \Big| \mathbb E^\finitegraph \big[ \Psi_\finitegraph(x) \, \big| \,
      \sigma(\Psi_\finitegraph(y),y\in A) \big ] -\frac{1}{d-1}
    \Psi_\finitegraph(\overline x) \Big|
    & \leq c_{b,b'} (\ln(N_n))^{-2},
    \label{518}      \\
    \Big| \Var_{\mathbb P^\finitegraph} \big( \Psi_\finitegraph(x) \, \big|
      \, \sigma(\Psi_\finitegraph(y),y\in A) \big ) - \frac{d}{d-1} \Big|
    & \leq c_{b,b'} (\ln(N_n))^{-3}.
    \label{547}
  \end{align}
  Recall that $\overline x \in A$ denotes the unique neighbour of $x$ in
  $A$ (see \eqref{485}).
\end{proposition}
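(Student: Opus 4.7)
The plan is to apply Lemma~\ref{conditionaldistribution} and then analyse the resulting hitting-time and hitting-distribution quantities on $\finitegraph$ by splitting each random walk excursion into a \emph{local} part (before the walk leaves $B_\finitegraph(A,s_n)$) and a \emph{global} part (after it leaves). The local part is controlled via the cover tree $\pi_{n,x}$ because condition \eqref{474} together with \eqref{485} and \eqref{473} makes $V'\coloneqq F_A(x,s_n)$ a tree-like set whose only edge into $A$ is the edge $x\overline x$; the global part is controlled via the spectral gap \eqref{2}, which forces the walk to mix on a scale $O(\log N_n)$ much shorter than the hitting time $E_\pi^\finitegraph[H_A]\asymp N_n/|A|\gg\log N_n$.

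First, I would simplify the conditional variance formula \eqref{456}. Using the identity \eqref{1.7} with $U=\finitegraph\setminus A$, one obtains $G_\finitegraph(x,x)-E_x^\finitegraph[G_\finitegraph(X_{H_A},x)]=g_\finitegraph^U(x,x)-\tfrac{1}{N_n}E_x^\finitegraph[H_A]$. Summing \eqref{1.7} over the starting vertex and using $\sum_z G_\finitegraph(z,x)=0$ (zero-average) and $\sum_{z\in U}g_\finitegraph^U(z,x)=E_x^\finitegraph[H_A]$ yields $E_\pi^\finitegraph[G_\finitegraph(X_{H_A},x)]=(E_\pi^\finitegraph[H_A]-E_x^\finitegraph[H_A])/N_n$. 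Substituting, the linear-in-$E_x^\finitegraph[H_A]/N_n$ terms cancel, leaving the clean expression
\begin{equation*}
\Var_{\mathbb P^\finitegraph}\bigl(\Psi_\finitegraph(x)\bigm|\sigma(\Psi_\finitegraph(y),y\in A)\bigr)=g_\finitegraph^U(x,x)-\frac{(E_x^\finitegraph[H_A])^2}{N_n\,E_\pi^\finitegraph[H_A]}.
\end{equation*}

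Next I would perform the local/global decomposition. By \eqref{485}, \eqref{473} and the proof given below \eqref{850}, the walk from $x$ can only hit $A$ from inside $V'$ at $\overline x$, and any exit of $V'$ that is not to $\overline x$ lands outside $B_\finitegraph(A,s_n)$. By Remark~\ref{extraremark4} applied to $V'\cup\{\overline x\}$, the walk up to $T_{V'}$ is isomorphic under $\pi_{n,x}$ to the walk on the lifted subtree $\widehat{V'}\subset\treegraph$ starting at $\rot$, where $\overline x$ corresponds to $\overline{\rot}$. Using the standard hitting and Green-function computations on $\treegraph$ together with the geometric tail bound of Lemma~\ref{25} for exits through $S_\treegraph(\rot,s_n)$, one obtains
\begin{equation*}
p_x\coloneqq P_x^\finitegraph[X_{T_{V'}}=\overline x]=\tfrac{1}{d-1}+O\bigl((d-1)^{-s_n}\bigr),\qquad g_\finitegraph^{V'}(x,x)=\tfrac{d}{d-1}+O\bigl((d-1)^{-s_n}\bigr),
\end{equation*}
both errors being of order $(\log_{d-1}N_n)^{-8}$ by the choice \eqref{471} of $s_n$. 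For the global contribution I would invoke the spectral-gap estimate, which gives $\|P_y^k-\pi\|_\mathrm{TV}\le cN_n^{1/2}(1-\beta)^{k/2}$ for all $y\in\finitegraph$; since $|A|\leq b\log N_n$ forces $E_\pi^\finitegraph[H_A]\gg\log N_n$, the walk mixes essentially completely between leaving $B_\finitegraph(A,s_n)$ and entering $A$. Applying the strong Markov property at $T_{V'}$ on the event $\{T_{V'}<H_A\}$ and using the bound $|\Psi_\finitegraph|\le b'\sqrt{\log N_n}$ on $A$, this yields
\begin{equation*}
E_x^\finitegraph[\Psi_\finitegraph(X_{H_A})]=p_x\Psi_\finitegraph(\overline x)+(1-p_x)E_\pi^\finitegraph[\Psi_\finitegraph(X_{H_A})]+O(N_n^{-c}),
\end{equation*}
and analogously $E_x^\finitegraph[H_A]=(1-p_x)E_\pi^\finitegraph[H_A]+O((\log N_n)^{c})$.

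Finally I would combine these estimates. For the mean \eqref{518}, substituting into Lemma~\ref{conditionaldistribution} the two $E_\pi^\finitegraph[\Psi_\finitegraph(X_{H_A})]$ terms cancel up to an error of order $(1-p_x-E_x^\finitegraph[H_A]/E_\pi^\finitegraph[H_A])\cdot b'\sqrt{\log N_n}=O((\log N_n)^{-7})$, leaving $p_x\Psi_\finitegraph(\overline x)=\tfrac{1}{d-1}\Psi_\finitegraph(\overline x)+O((\log N_n)^{-7})$, well within $c_{b,b'}(\log N_n)^{-2}$. For the variance \eqref{547}, it remains to expand $g_\finitegraph^U(x,x)=g_\finitegraph^{V'}(x,x)+$ (visits to $x$ after $T_{V'}$ before $H_A$); conditioning on $X_{T_{V'}}$ outside $B_\finitegraph(A,s_n)$ and using mixing yields that the latter equals $(1-p_x)E_x^\finitegraph[H_A]/N_n$ up to polynomial error, and using $E_x^\finitegraph[H_A]\approx(1-p_x)E_\pi^\finitegraph[H_A]$ this cancels $(E_x^\finitegraph[H_A])^2/(N_nE_\pi^\finitegraph[H_A])$ to the same order, yielding $\tfrac{d}{d-1}+O((\log N_n)^{-8})$. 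The main obstacle is quantifying the mixing cancellation sharply enough: one must show the \emph{same} effective weight $1-p_x$ emerges in $E_x^\finitegraph[H_A]/E_\pi^\finitegraph[H_A]$ and in $E_x^\finitegraph[\Psi_\finitegraph(X_{H_A})\mathbf{1}_{T_{V'}<H_A}]/E_\pi^\finitegraph[\Psi_\finitegraph(X_{H_A})]$; this is done uniformly by exploiting that the spectral gap is bounded below by $\beta>0$ independent of $n$, so the post-mixing hitting distribution on $A$ is the same (up to $N_n^{-c}$) regardless of the exit vertex $X_{T_{V'}}$.
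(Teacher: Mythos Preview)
Your approach is essentially the paper's: decompose at the exit time $T_{F_A}$ from $V'=F_A(x,s_n)$, handle the local part by tree calculations, and the global part by mixing. The clean variance identity you derive is exactly equation~\eqref{454}/(the line after it) in the proof of Lemma~\ref{conditionaldistribution}. However, several quantitative claims are off and one key mechanism is misidentified.

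\textbf{The error is polylogarithmic, not $O(N_n^{-c})$.} Your displayed identity
\[
E_x^\finitegraph[\Psi_\finitegraph(X_{H_A})]=p_x\Psi_\finitegraph(\overline x)+(1-p_x)E_\pi^\finitegraph[\Psi_\finitegraph(X_{H_A})]+O(N_n^{-c})
\]
cannot hold with that error. After the walk exits $V'$ to a point $z\notin B_\finitegraph(A,s_n)$ it still needs time $t_n\asymp(\ln N_n)^2$ to mix, and during that window it may hit $A$. The probability of this ``middle regime'' event $\{T_{F_A}<H_A<T_{F_A}+t_n\}$ is only $O((\ln N_n)^{-5})$, not $O(N_n^{-c})$: one bounds $P_z^\finitegraph[H_A<t_n]\le |A|\bigl(ct_n(d-1)^{-s_n}+e^{-c't_n}\bigr)$ using the local tree structure and the choice \eqref{471} of $s_n$ (this is \cite{21}, Lemma~3.4, invoked in the paper as item~\eqref{0005} of Lemma~\ref{technicalSRWlemma}). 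The paper therefore splits into \emph{three} regimes $\{H_A=T_{F_A}\}$, $\{T_{F_A}<H_A<T_{F_A}+t_n\}$, $\{H_A\ge T_{F_A}+t_n\}$ rather than two. Your justification ``$E_\pi^\finitegraph[H_A]\gg\log N_n$ so the walk mixes before hitting $A$'' is not the right argument: a walk started \emph{near} $A$ can hit it in $O(1)$ steps regardless of $E_\pi^\finitegraph[H_A]$. What makes the middle regime small is that the exit point is at distance $>s_n$ from $A$ and the graph is locally tree-like, so re-approach within $t_n$ steps is unlikely. Similarly, ``$E_x^\finitegraph[H_A]=(1-p_x)E_\pi^\finitegraph[H_A]+O((\log N_n)^c)$'' is wrong as an additive statement (the true additive error is $O(E_\pi^\finitegraph[H_A](\ln N_n)^{-3})\sim N_n(\ln N_n)^{-4}$); what one actually needs and what the paper proves via \cite{21}, Proposition~3.5, is the \emph{ratio} estimate $E_x^\finitegraph[H_A]/E_\pi^\finitegraph[H_A]=(1-p_x)+O((\ln N_n)^{-3})$, see \eqref{557}.

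\textbf{For the variance}, your route via $g_\finitegraph^U(x,x)=g_\finitegraph^{V'}(x,x)+\text{(later visits)}$ requires controlling $g_\finitegraph^U(z,x)$ for $z\notin B_\finitegraph(A,s_n)$ with $x\in\partial_\finitegraph A$; since each visit to $x$ has an $\Omega(1)$ chance of immediately hitting $A$ through $\overline x$, this coupling of ``visits to $x$'' and ``hitting $A$'' makes the cancellation delicate. The paper sidesteps this by staying with formula \eqref{456} in terms of $G_\finitegraph(\cdot,x)$, which is uniformly $O(1)$ by Proposition~\ref{proposition1.1}; then the three-regime split gives directly bounded error terms $\overline U,\overline V,\overline W$ without ever needing to estimate $g_\finitegraph^U(z,x)$. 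Your route can be made to work, but it is not the shortcut it appears to be.
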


To show Proposition \ref{535} and conclude this section we will
manipulate the explicit expressions for the conditional expectation and
variance obtained in Lemma \ref{conditionaldistribution}. In these
expressions one considers the hitting time of $A$ for the simple random
walk on $\finitegraph$ and in the proof of Proposition \ref{535} we will
look at different situations for when the hitting happens (see the
  beginning of the proof of Proposition \ref{535} below). Since
$x \in G_A$ in the statement, the simple random walk started at $x$ has
to leave $F_A(x,s_n)$ to hit $A$ and so  $P_x^\finitegraph$-almost surely
either $H_A = T_{F_A(x,s_n)}$ or $H_A > T_{F_A(x,s_n)}$. We will further
split the latter case into whether $H_A$ happens before or after an
additional time
\begin{equation}
  \label{97}
  t_n \coloneqq \frac{1}{\lambda_\finitegraph} (\ln(N_n))^2,
\end{equation}
by which the distribution of the simple random walk is very close to the
stationary distribution (here the uniform distribution on $\finitegraph$).
This follows from e.g.~\cite{44}, Corollary~2.1.5. So in the proof of
Proposition \ref{535} we will consider the three situations
$H_A = T_{F_A(x,s_n)}$, $T_{F_A(x,s_n)} < H_A <  T_{F_A(x,s_n)} +t_n$ and
$H_A \geq  T_{F_A(x,s_n)} +t_n$ separately. Before that, we collect in
Lemma \ref{technicalSRWlemma} some preliminary observations about the
simple random walk on $\finitegraph$  and subsequently start with the
proof of Proposition \ref{535}. For the rest of this section we will
abbreviate $F_A \coloneqq F_A(x,s_n)$. It is also convenient to consider
the \emph{continuous-time} simple random walk $(\overline{X}_t)_{t\geq 0}$.
We remind that for the exit time from $U\subseteq \finitegraph$
(resp.~for the entrance time in $U\subseteq \finitegraph$) of this walk
we use the same notation $T_U$ (resp.~$H_U$) as for the discrete-time
simple random walk.

\begin{lemma}
  \label{technicalSRWlemma}
  For $n\geq1$, $A \subseteq \finitegraph$ non-empty and $x\in G_A$  one has
  \begin{flalign}
    \textup{(i)} & \ \,   \tfrac{1}{d-1} -  c (\ln(N_n))^{-7} \leq P_x^\finitegraph[ H_A = T_{F_A}] \leq
    \tfrac{1}{d-1}   \quad \text{and} \quad
    E_x^\finitegraph[T_{F_A}]  \leq c\ln(N_n)  &
    \label{0007} \\[6pt]
    \textup{(ii)} & \ \,   P_x^\finitegraph[H_A>T_{F_A}]= \sum_{z\in B_\finitegraph(A,s_n)^\mathsf{c}}
    P_x^\finitegraph[\overline{X}_{T_{F_A}}=z, H_A>T_{F_A}].   &
    \label{96}
  \end{flalign}
  Moreover, for every $b>0$ there exists $c_b>0$ such that for $n\geq1$,
  $A \subseteq \finitegraph$ non-empty with $|A| \leq b \ln(N_n)$ and
  $x\in G_A$  one has
  \begin{flalign}
    \textup{(iii)} & \ \,  P_x^\finitegraph [T_{F_A} < H_A <  T_{F_A} +t_n]  \leq c_b(\ln(N_n))^{-5} &
    \label{0005}  \\[6pt]
    \textup{(iv)} & \ \,   \sum_{w\in \finitegraph}   \big|  P_z^\finitegraph[ \overline{X}_{t_n}=w ,
      H_A \geq t_n]   -  \tfrac{1}{N_n} \big|   \leq c_b(\ln(N_n))^{-5} \quad \text{for } z \in \finitegraph  &
    \label{0006}  \\[6pt]
    \textup{(v)} & \ \,    \Big| \tfrac{E_x^\finitegraph[H_A]}{E_\pi^\finitegraph[H_A] }  -
    P_x^\finitegraph[H_A>T_{F_A}]  \Big|  \leq  c_b (\ln(N_n))^{-3} .
    \label{557}
  \end{flalign}
\end{lemma}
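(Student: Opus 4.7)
The unifying principle is that, since $x\in G_A$, the induced subgraph on $F_A\cup\{\overline x\}$ is a finite tree that meets $A$ only at the single edge $\{x,\overline x\}$ (by \eqref{485} and \eqref{474}, together with \eqref{473} which forces $F_A\cap\partial_\finitegraph A=\{x\}$). Via the cover tree $\pi_{n,x}$ at $x$ it lifts isomorphically to a subtree $\widetilde F_A\cup\{\widetilde{\overline x}\}\subseteq\treegraph$, with $\widetilde{\overline x}$ the unique neighbour of $\rot$ outside $\widetilde F_A$ that projects to $\overline x$, and then \eqref{4} converts every quantity depending only on the walk up to $T_{F_A}$ into its matching quantity on $\treegraph$. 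All remaining (bulk) quantities are controlled by the $\ell^1$-mixing bound $\sum_w|P_z^\finitegraph[\overline X_t=w]-1/N_n|\leq 2e^{-\lambda_\finitegraph t}$ from \cite{44}, Corollary~2.1.5, used together with $|A|\leq b\ln N_n$.

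For (i), write $p=P_x^\finitegraph[H_A=T_{F_A}]=P_\rot^\treegraph[X_{T_{\widetilde F_A}}=\widetilde{\overline x}]$ and decompose on the first step at $\rot$ to obtain $p=1/(d-R)$ with $R=\sum_c r_c$ summed over the $d-1$ children $c$ of $\rot$ in $\widetilde F_A$ and $r_c=P_c^\treegraph[H_\rot<T_{\widetilde F_A}]$. I compare the harmonic function $\phi(v)=P_v^\treegraph[H_\rot<T_{\widetilde F_A}]$ on $\widetilde F_A$ with its infinite-tree analogue $\phi^\infty(v)=(d-1)^{-d_\treegraph(\rot,v)}$: both equal $1$ at $\rot$, $\phi$ vanishes on the leaves of $\widetilde F_A$ while $\phi^\infty>0$ there, so by the maximum principle $0\leq\phi^\infty-\phi\leq\max_\ell\phi^\infty(\ell)$. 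Any leaf $\ell$ of $\widetilde F_A$ has all its non-parent neighbours in $B_\finitegraph(A,s_n)^{\mathsf c}$, so $\pi_{n,x}(\ell)$ sits at distance exactly $s_n$ from $A$ and hence at distance $\geq s_n-1$ from $x$, giving $d_\treegraph(\rot,\ell)\geq s_n-1$. Consequently $0\leq 1-R\leq(d-1)\cdot(d-1)^{-(s_n-1)}\leq c(\ln N_n)^{-8}$, which yields both $p\leq 1/(d-1)$ (from $R\leq 1$) and $p\geq 1/(d-1)-c(\ln N_n)^{-7}$. The bound on $E_x^\finitegraph[T_{F_A}]$ follows from the same lift and \eqref{1.1}: $E_x^\finitegraph[T_{F_A}]\leq\sum_{y\in\widetilde F_A}g_\treegraph(\rot,y)\leq c\,|\widetilde F_A|\leq c'\ln N_n$ since $|\widetilde F_A|\leq d(d-1)^{s_n+1}\leq c''\ln N_n$ by the choice of $s_n$.

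Statement (ii) is then a tautology: $\overline x$ is the only neighbour of $F_A$ in $A$, so on $\{H_A>T_{F_A}\}$ the exit vertex lies outside $A\cup F_A$, which by the definition of $F_A(x,s_n)$ forces it into $B_\finitegraph(A,s_n)^{\mathsf c}$. For (iii), applying the strong Markov property at $T_{F_A}$ and (ii) reduces the claim to bounding $\sup_{z\in B_\finitegraph(A,s_n)^{\mathsf c}}P_z^\finitegraph[H_A<t_n]$. I split $[0,t_n]$ at $t^\star=s_n/2$: on $[0,t^\star]$, a Chernoff bound for the Poisson number of jumps of the continuous-time walk gives fewer than $s_n$ jumps except on an event of probability $\leq e^{-cs_n}$, and with fewer than $s_n$ jumps the walk cannot reach $A$ from $z$; on $[t^\star,t_n]$ the mixing bound yields $P_z^\finitegraph[\overline X_s\in A]\leq|A|/N_n+|A|e^{-\lambda_\finitegraph s}$, and integrating together with a standard occupation-time-to-hitting-time comparison delivers the $(\ln N_n)^{-5}$ decay. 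Assertion (iv) then follows by writing
\[
P_z^\finitegraph[\overline X_{t_n}=w,H_A\geq t_n]=P_z^\finitegraph[\overline X_{t_n}=w]-P_z^\finitegraph[\overline X_{t_n}=w,H_A<t_n],
\]
summing over $w$, and combining the super-polynomially small mixing bound $2e^{-\lambda_\finitegraph t_n}$ at time $t_n$ with the estimate from (iii). Finally, for (v), strong Markov at $T_{F_A}$ gives
\[
E_x^\finitegraph[H_A]=E_x^\finitegraph[T_{F_A}]+E_x^\finitegraph\bigl[\mathbf{1}_{\{H_A>T_{F_A}\}}E^\finitegraph_{\overline X_{T_{F_A}}}[H_A]\bigr];
\]
arguments parallel to (iii)--(iv) show $E_z^\finitegraph[H_A]=E_\pi^\finitegraph[H_A]\,(1+O((\ln N_n)^{-3}))$ uniformly in $z\in B_\finitegraph(A,s_n)^{\mathsf c}$, and combining this with $E_x^\finitegraph[T_{F_A}]=O(\ln N_n)$ and $E_\pi^\finitegraph[H_A]\gtrsim N_n/|A|$ (polynomially large in $N_n$) delivers the claim after dividing by $E_\pi^\finitegraph[H_A]$.

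\textbf{Main obstacle.} The quantitatively delicate step is the uniform hitting bound $P_z^\finitegraph[H_A<t_n]\leq c_b(\ln N_n)^{-5}$ for $z\in B_\finitegraph(A,s_n)^{\mathsf c}$ that underpins (iii)--(v): both sides of the splitting at $t^\star$ must simultaneously produce polylogarithmic error, which is precisely why $s_n$ is chosen of order $8\log_{d-1}\log_{d-1}N_n$ in \eqref{471} and $t_n$ of order $(\ln N_n)^2/\lambda_\finitegraph$ in \eqref{97}. Assertion (i) is conceptually cleaner, but still relies on the geometric observation that every leaf of $\widetilde F_A$ sits at graph distance at least $s_n-1$ from $\rot$, which is where the properness of $F_A$ encoded in \eqref{485}--\eqref{473} enters the argument.
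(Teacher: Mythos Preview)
Your maximum-principle route to the probability estimate in (i) is correct and is a nice alternative to the paper's direct gambler's-ruin computation. However, your bound on $E_x^\finitegraph[T_{F_A}]$ contains an arithmetic slip: $|\widetilde F_A|$ is of order $(d-1)^{s_n}\asymp(\ln N_n)^8$ by \eqref{471}, not $O(\ln N_n)$. The fix is easy---sum $g_\treegraph(\rot,y)$ level by level: at depth $k$ there are at most $(d-1)^k$ vertices each contributing $\tfrac{d-1}{d-2}(d-1)^{-k}$, so $\sum_{y\in\widetilde F_A}g_\treegraph(\rot,y)\leq \tfrac{d-1}{d-2}\,s_n\leq c\ln N_n$.

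The real gap is in (iii). Your splitting at $t^\star=s_n/2$ cannot deliver $(\ln N_n)^{-5}$: since $t^\star=O(\ln\ln N_n)$, the mixing term on $[t^\star,t_n]$ is governed by $e^{-\lambda_\finitegraph t^\star}\leq e^{-\beta s_n/2}\asymp(\ln N_n)^{-c\beta}$, and the constant $\beta$ in assumption \eqref{2} may be arbitrarily small. No choice of $t^\star$ rescues this---shrinking $t^\star$ to improve the Chernoff part only weakens the mixing part further. The paper instead invokes \cite{21}, Lemma~3.4, which for $z$ at distance $>s_n$ from a single vertex $y$ gives
\[
P_z^\finitegraph[H_y<t_n]\ \leq\ c\,t_n\,(d-1)^{-s_n}+e^{-c't_n}.
\]
The crucial factor $(d-1)^{-s_n}\asymp(\ln N_n)^{-8}$ comes from the \emph{local tree structure} \eqref{1} (escape probability on a tree), not from the spectral gap, and is therefore uniform in $\beta$. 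Summing over $y\in A$ then yields the $(\ln N_n)^{-5}$ bound. The same tree-based input---together with \cite{21}, Proposition~3.5, for the ratio $E_z^\finitegraph[H_A]/E_\pi^\finitegraph[H_A]$ in (v)---is what your phrase ``arguments parallel to (iii)--(iv)'' would have to reproduce; the purely spectral route you propose is too weak here.
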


\begin{proof}
  Due to \eqref{474}, the probability $P_x^\finitegraph[H_A = T_{F_A}]$
  is equal to the probability that a (discrete-time) random walk on
  $\mathbb Z$ started at 1   and jumping with probability $\frac{d-1}{d}$
  to the right and $\frac{1}{d}$ to the left hits 0 before hitting $s_n+1$.
  Similarly, $E_x^\finitegraph[T_{F_A}]$  is equal to the expected  time
  until this random walk hits 0 or $s_n+1$. Thus (see e.g.~\cite{45},
    (2.4) and (3.4) in Chapter 14) it holds
  \begin{equation*}
    \begin{split}
      &P_x^\finitegraph[ H_A = T_{F_A}] =1-
      \tfrac{d-2}{d-1}(1-(\tfrac{1}{d-1})^{s_n+1})^{-1} \leq 1 - \tfrac{d-2}{d-1} = \tfrac{1}{d-1},   \\
      &E_x^\finitegraph[T_{F_A}] = \tfrac{d}{d-2}\Big( (s_n+1) \tfrac{d-2}{d-1}
        \frac{1}{1-(\frac{1}{d-1})^{s_n+1}}-1  \Big)
      \leq 2\tfrac{d}{d-1}  (s_n+1) \overset{\eqref{471}}{\leq} c\ln(N_n).
    \end{split}
  \end{equation*}
  Since
  $(1-(\frac{1}{d-1})^{s_n+1})^{-1} \leq
  (1- (\log_{d-1}(N_n))^{-8})^{-1} \leq 1 + c (\ln(N_n))^{-7}$,
  one also has
  $P_x^\finitegraph[ H_A = T_{F_A}] \geq 1-  \tfrac{d-2}{d-1}(1 + c
    (\ln(N_n))^{-7}) \geq \tfrac{1}{d-1} - c (\ln(N_n))^{-7}$.
  Thus \eqref{0007} is shown.

  To see \eqref{96} observe that on the event $\{H_A > T_{F_A} \}$, at
  the moment the simple random walk started at $x$ leaves $F_A$, it is in
  some
  $z\in \partial_\finitegraph F_A \cap B_\finitegraph(A,s_n)^\mathsf{c}$
  (note that indeed $z \notin B_\finitegraph(A,s_n)$ since else there
    would exist a path like those excluded by \eqref{473}). In other words,
  \begin{equation}
    \label{0003}
    P_x^\finitegraph \text{-almost surely } \overline X_{T_{F_A}}
    \in \partial_\finitegraph F_A \cap B_\finitegraph(A,s_n)^\mathsf{c}
    \text{ on the event } \{H_A > T_{F_A} \}.
  \end{equation}
  This shows \eqref{96}. To derive \eqref{0005} we apply the strong
  Markov property of  simple random walk for  time $T_{F_A}$ and obtain
  for $n\geq 1$
  \begin{equation}
    \label{0004}
    P_x^\finitegraph [T_{F_A} < H_A <  T_{F_A} +t_n]
    \overset{\eqref{0003}}{\leq} \sup_{z \in
      B_\finitegraph(A,s_n)^\mathsf{c}}  P_z^\finitegraph[H_A <
      t_n] .
  \end{equation}
  Roughly speaking, the right hand side of \eqref{0004} is small since it
  is difficult for the simple random walk to hit $A$ within time $t_n$
  because it starts at distance larger than $s_n$ from $A$ and the
  environment is nearly treelike (see $\eqref{1}$). More precisely,  we
  can apply \cite{21}, Lemma 3.4 (for $T \coloneqq t_n$, $r\coloneqq 0$,
    $s\coloneqq s_n$ and using \eqref{1}) to find  $c,c'>0$  such that
  for $z \in B_\finitegraph(A,s_n)^\mathsf{c}$  one has for $n\geq 1$
  \begin{equation*}
    P_z^\finitegraph[H_A < t_n]
    \leq \sum_{y \in A} P_z^\finitegraph[H_y < t_n]
    \leq |A| \big( c t_n (d-1)^{-s_n} + e^{-c't_n} \big)
    \overset{\eqref{471}}{\leq} c_b(\ln(N_n))^{-5},
  \end{equation*}
  where the last inequality also uses the assumption on $A$, \eqref{97}
  and \eqref{2}. This combined with \eqref{0004} gives \eqref{0005}.

  For \eqref{0006} the idea is that on the event $\{H_A \geq t_n\}$ the
  simple random walk started at $z$ has, roughly speaking, reached the
  stationary distribution by time $t_n$ without having hit $A$. We
  observe that for $z,w\in \finitegraph$ one has
  \begin{align*}
    \big| P_z^\finitegraph[ \overline{X}_{t_n}=w ,  H_A \geq t_n]     -
    \tfrac{1}{N_n} \big|
    &\overset{\phantom{\eqref{97}}}{\leq}  P_z^\finitegraph[ \overline{X}_{t_n}=w ,  H_A < t_n]
    +    \big|  P_z^\finitegraph[ \overline{X}_{t_n}=w] -  \tfrac{1}{N_n} \big|  \\
    &\overset{\, \ (*) \ \, }{\leq} P_z^\finitegraph[ \overline{X}_{t_n}=w ,  H_A < t_n]+
    \exp(-\lambda_\finitegraph t_n) \\
    &\overset{\eqref{97}}{=} P_z^\finitegraph[ \overline{X}_{t_n}=w ,  H_A
      < t_n]+ \exp(-(\ln(N_n))^2),
  \end{align*}
  where in $(*)$ we apply \cite{44}, Corollary 2.1.5. Hence for $n\geq 1$,
  $z \in \finitegraph$, one has
  \begin{equation*}
    \begin{split}
      &\sum_{w\in \finitegraph}   \big|  P_z^\finitegraph[ \overline{X}_{t_n}=w ,
        H_A \geq t_n]   -  \tfrac{1}{N_n} \big|   \leq P_z^\finitegraph[ H_A < t_n]
      + N_n \exp(-(\ln(N_n))^2),
    \end{split}
  \end{equation*}
  which together with the above estimate on $P_z^\finitegraph[ H_A < t_n]$
  gives \eqref{0006}. It remains to show \eqref{557}. We start by
  computing (using also (3.20) of \cite{21} in the second inequality)
  \begin{equation}
    \label{816}
    \frac{E_x^\finitegraph[H_A \bbone_{H_A=T_{F_A}}]}{E_\pi^\finitegraph[H_A] }
    \leq \frac{E_x^\finitegraph[T_{F_A}]}{E_\pi^\finitegraph[H_A] }
    \overset{\eqref{0007}}{\leq} c\ln(N_n)  \frac{4|A|}{N_n} \leq
    \frac{c_b(\ln(N_n))^2}{N_n}.
  \end{equation}
  Now by \eqref{0003} and the  strong Markov property of simple random
  walk for time $T_{F_A}$ one has
  $E_x^\finitegraph[H_A \bbone_{H_A>T_{F_A}}]
  = \sum_{z\in B_\finitegraph(A,s_n)^\mathsf{c}} P_x^\finitegraph[
    \overline{X}_{T_{F_A}}=z, H_A>T_{F_A}] E_z^\finitegraph[H_A]$.
  This combined with \eqref{96} shows
  \begin{equation}
    \label{815}
    \begin{split}
      &\bigg| \frac{E_x^\finitegraph[H_A
          \bbone_{H_A>T_{F_A}}]}{E_\pi^\finitegraph[H_A] }  -
      P_x^\finitegraph[H_A>T_{F_A}]  \bigg|
      \leq   \sup_{z \in  B_\finitegraph(A,s_n)^\mathsf{c}}  \bigg|
      \frac{E_z^\finitegraph[H_A]}{E_\pi^\finitegraph[H_A] } - 1 \bigg|.
    \end{split}
  \end{equation}
  By \cite{21}, Proposition 3.5, we can bound  the absolute value on the
  right hand side of \eqref{815} by
  $c|A|(d-1)^{-s_n}(\ln(N_n))^4 \leq c_b(\ln(N_n))^{-3}$. Since
  $P_x^\finitegraph$-almost surely either $H_A=T_{F_A}$ or $H_A > T_{F_A}$,
  the combination of \eqref{816} and \eqref{815} concludes the proof.
\end{proof}

\begin{proof}[Proof of Proposition \ref{535}]
  We start with the basic observation that by \eqref{449} one has
  $\big| \mathbb E^\finitegraph \big[ \Psi_\finitegraph(x) \,
    \big| \, \sigma(\Psi_\finitegraph(y),y\in A) \big ]
  -\frac{1}{d-1} \Psi_\finitegraph(\overline x) \big|
  \leq U_{A,x}^\finitegraph + V_{A,x}^\finitegraph +W_{A,x}^\finitegraph $,
  where
  \begin{equation*}
    \begin{split}
      U_{A,x}^\finitegraph &\coloneqq \Big|  E_x^\finitegraph
      \big[\Psi_\finitegraph(\overline{X}_{H_A}) \bbone_{\{ H_A = T_{F_A} \}}
        \big]  - \tfrac{1}{d-1} \Psi_\finitegraph(\overline x) \Big|, \\
      V_{A,x}^\finitegraph &\coloneqq  \big| E_x^\finitegraph
      \big[\Psi_\finitegraph(\overline{X}_{H_A})  \bbone_{\{ T_{F_A} < H_A <
            T_{F_A} +t_n \}}  \big]  \big|, \\
      W_{A,x}^\finitegraph &\coloneqq  \Big| E_x^\finitegraph
      \big[\Psi_\finitegraph(\overline{X}_{H_A})   \bbone_{\{ H_A \geq  T_{F_A}
            +t_n\}}  \big]
      -   \tfrac{E_x^\finitegraph[H_A]}{E_\pi^\finitegraph[H_A] }
      E_\pi^\finitegraph \big [\Psi_\finitegraph(\overline{X}_{H_A})  \big]
      \Big|.
    \end{split}
  \end{equation*}
  Hence the proof of \eqref{518} follows once we show  that
  \begin{equation}
    \label{520}
    \parbox{12cm}{
      there exists $c_{b,b'}>0$ such that  for $n\geq1$, $A \subseteq \finitegraph$
      non-empty with $|A| \leq b \ln(N_n)$, $x\in G_A$ and on the  event
      $\big\{\textstyle \sup_{z\in A} | \Psi_\finitegraph(z) | \leq b'
        \sqrt{\ln(N_n)} \big \}$ one has $U_{A,x}^\finitegraph + V_{A,x}^\finitegraph +
      W_{A,x}^\finitegraph \leq c_{b,b'} (\ln(N_n))^{-2} $.
    }
  \end{equation}
  Similarly we have
  $\big| \Var_{\mathbb P^\finitegraph} \big(
    \Psi_\finitegraph(x) \, \big| \, \sigma(\Psi_\finitegraph(y),y\in A) \big ) -
  \frac{d}{d-1} \big|  \leq   \overline{U}_{A,x}^\finitegraph +
  \overline{V}_{A,x}^\finitegraph +\overline{W}_{A,x}^\finitegraph$
  by \eqref{456}, where
  \begin{equation*}
    \begin{split}
      \overline{U}_{A,x}^\finitegraph &\coloneqq \Big|  G_\finitegraph(x,x)  -
      E_x^\finitegraph \big[G_\finitegraph(X_{H_A},x) \bbone_{\{ H_A = T_{F_A}
            \}} \big]  - \tfrac{d}{d-1}\Big|,   \\
      \overline{V}_{A,x}^\finitegraph &\coloneqq E_x^\finitegraph
      \big[G_\finitegraph(X_{H_A},x) \bbone_{\{ T_{F_A} < H_A < T_{F_A} +t_n
            \}}  \big],  \\
      \overline{W}_{A,x}^\finitegraph &\coloneqq  \Big| E_x^\finitegraph
      \big[G_\finitegraph(X_{H_A},x) \bbone_{\{ H_A \geq  T_{F_A} +t_n\}}  \big]
      -   \tfrac{E_x^\finitegraph[H_A]}{E_\pi^\finitegraph[H_A] }
      E_\pi^\finitegraph[G_\finitegraph(X_{H_A},x)]   \Big|.
    \end{split}
  \end{equation*}
  Thus the proof of \eqref{547} follows once we show that
  \begin{equation}
    \label{549}
    \parbox{12cm}{
      there exists $c_{b}>0$ such that  for $n \geq 1$, $A \subseteq \finitegraph$
      non-empty with $|A| \leq b \ln(N_n)$ and $x\in G_A$ one has
      $\overline{U}_{A,x}^\finitegraph+ \overline{V}_{A,x}^\finitegraph +
      \overline{W}_{A,x}^\finitegraph \leq  c_b (\ln(N_n))^{-3} $.
    }
  \end{equation}

  It remains to show \eqref{520} and \eqref{549}. For \eqref{520} we
  bound the three terms $U_{A,x}^\finitegraph$, $V_{A,x}^\finitegraph$
  and $W_{A,x}^\finitegraph$ separately. On $\{ H_A = T_{F_A} \}$  one
  has $P_x^\finitegraph$-almost surely
  $\Psi_\finitegraph(\overline{X}_{H_A}) = \Psi_\finitegraph(\overline x)$
  due to $x\in G_A$. Therefore we deduce
  $U_{A,x}^\finitegraph
  = |\Psi_\finitegraph(\overline x) | \cdot \big| P_x^\finitegraph[H_A = T_{F_A}]
  - \tfrac{1}{d-1}   \big| \leq b' \sqrt{\ln(N_n)} c (\ln(N_n))^{-7}$
  by \eqref{0007}, where in the last inequality we  also use that
  $\overline{x}\in A$. This shows
  $U_{A,x}^\finitegraph \leq c_{b'} (\ln(N_n))^{-6}$.

  We turn to $V_{A,x}^\finitegraph$. By  \eqref{0005}  we have
  $V_{A,x}^\finitegraph \leq \sup_{y \in A} | \Psi_\finitegraph(y) |
  \cdot P_x^\finitegraph [T_{F_A} < H_A <  T_{F_A} +t_n]
  \leq b' \sqrt{\ln(N_n)} c_b(\ln(N_n))^{-5}$.
  This shows $V_{A,x}^\finitegraph \leq c_{b,b'} (\ln(N_n))^{-4}$.

  Finally, we consider  $W_{A,x}^\finitegraph$. Let us define
  \begin{equation}
    \label{545}
    \begin{split}
      Y_{A,x}^\finitegraph &\coloneqq \Big|
      \tfrac{E_x^\finitegraph[H_A]}{E_\pi^\finitegraph[H_A] }  -
      P_x^\finitegraph[H_A>T_{F_A}]  \Big|,
      \\
      Z_{A,x}^\finitegraph &\coloneqq \Big|  E_x^\finitegraph
      \big[\Psi_\finitegraph(\overline{X}_{H_A})   \bbone_{\{ H_A \geq  T_{F_A}
            +t_n\}}  \big] - P_x^\finitegraph[H_A > T_{F_A}]  E_\pi^\finitegraph \big
      [\Psi_\finitegraph(\overline{X}_{H_A}) \big] \Big|.
    \end{split}
  \end{equation}
  By adding and subtracting
  $\tfrac{1}{P_x^\finitegraph[H_A > T_{F_A}]}
  \tfrac{E_x^\finitegraph[H_A]}{E_\pi^\finitegraph[H_A] }  E_x^\finitegraph
  \big[\Psi_\finitegraph(\overline{X}_{H_A})   \bbone_{\{ H_A \geq  T_{F_A}
        +t_n\}}  \big]$
  inside the expression for $W_{A,x}^\finitegraph$ we obtain
  \begin{equation}
    \label{817}
    W_{A,x}^\finitegraph
    \leq \tfrac{| E_x^\finitegraph [\Psi_\finitegraph(\overline{X}_{H_A})
        \bbone_{\{ H_A \geq  T_{F_A} +t_n\}}  ] |}{P_x^\finitegraph[H_A > T_{F_A}]}
    \,Y_{A,x}^\finitegraph
    + \tfrac{1}{P_x^\finitegraph[H_A > T_{F_A}]}
    \tfrac{E_x^\finitegraph[H_A]}{E_\pi^\finitegraph[H_A] } Z_{A,x}^\finitegraph.
  \end{equation}
  To the first term on the right hand side of \eqref{817} we apply
  $P_x^\finitegraph[H_A > T_{F_A}] \geq \frac{d-2}{d-1}$ (by
    \eqref{0007}) as well as \eqref{557} and the assumption on the
  supremum of $\Psi_\finitegraph$ on $A$. For the second term we first
  observe \eqref{557} and then again use
  $P_x^\finitegraph[H_A > T_{F_A}] \geq \frac{d-2}{d-1}$. In this way we
  obtain
  \begin{equation}
    \label{821}
    W_{A,x}^\finitegraph
    \leq c_{b,b'}(\ln(N_n))^{-2}
    + (1+c_b (\ln(N_n))^{-3}) Z_{A,x}^\finitegraph.
  \end{equation}
  We proceed to bound $Z_{A,x}^\finitegraph$. By \eqref{0003} and the
  strong Markov property for time $T_{F_A}$ it holds
  \begin{align*}
    &E_x^\finitegraph \big[\Psi_\finitegraph(\overline{X}_{H_A})   \bbone_{\{
          H_A \geq  T_{F_A} +t_n\}}  \big] \\
    &\qquad \qquad = \sum_{z\in B_\finitegraph(A,s_n)^\mathsf{c}}
    P_x^\finitegraph[\overline{X}_{T_{F_A}}=z, H_A>T_{F_A}] E_z^\finitegraph
    \big[\Psi_\finitegraph(\overline{X}_{H_A})   \bbone_{\{ H_A \geq t_n\}}
      \big].
  \end{align*}
  This combined with \eqref{96} implies
  $Z_{A,x}^\finitegraph
  \leq   \sup_{z\in B_\finitegraph(A,s_n)^\mathsf{c}}    \big|
  E_z^\finitegraph[\Psi_\finitegraph(\overline{X}_{H_A}) \bbone_{\{ H_A \geq
        t_n \}}]     - E_{\pi}^\finitegraph[\Psi_\finitegraph(\overline{X}_{H_A})]
  \big|$.
  Now for $z\in B_\finitegraph(A,s_n)^\mathsf{c}$, by the Markov property applied
  at  time $t_n$ and the definition of $E_{\pi}^\finitegraph$,
  \begin{equation*}
    \begin{split}
      &\big| E_z^\finitegraph[\Psi_\finitegraph(\overline{X}_{H_A}) \bbone_{\{
            H_A \geq t_n\}}]     -
      E_{\pi}^\finitegraph[\Psi_\finitegraph(\overline{X}_{H_A})] \big| \\
      &\leq  \sum_{w\in \finitegraph}   \big|
      E_w^\finitegraph[\Psi_\finitegraph(\overline X_{H_A})] \big|  \cdot \big|
      P_z^\finitegraph[ \overline{X}_{t_n}=w ,  H_A \geq t_n]     -  \tfrac{1}{N_n}
      \big| \overset{\eqref{0006}}{\leq} c_{b,b'}(\ln(N_n))^{-4},
    \end{split}
  \end{equation*}
  where in the last inequality we also use the assumption on the supremum
  of $\Psi_\finitegraph$ on $A$. All in all we have shown
  $Z_{A,x}^\finitegraph \leq c_{b,b'}(\ln(N_n))^{-4}$. Thus by
  \eqref{821} we deduce
  $W_{A,x}^\finitegraph \leq c_{b,b'} (\ln(N_n))^{-2} $ and the proof of
  \eqref{520} is complete.

  We come to the proof of \eqref{549} for which we bound the three terms
  $\overline{U}_{A,x}^\finitegraph$, $\overline{V}_{A,x}^\finitegraph$
  and $\overline{W}_{A,x}^\finitegraph$ separately. For
  $\overline{U}_{A,x}^\finitegraph$ we first note that one has
  $E_x^\finitegraph \big[G_\finitegraph(X_{H_A},x)
    \bbone_{\{ H_A = T_{F_A} \}} \big]
  = E_x^\finitegraph \big[G_\finitegraph(X_{T_{F_A}},x) \bbone_{\{ H_A = T_{F_A} \}} \big]
  = E_x^\finitegraph \big[G_\finitegraph(X_{T_{F_A}},x)\big]
  - E_x^\finitegraph \big[G_\finitegraph(X_{T_{F_A}},x) \bbone_{\{ H_A > T_{F_A} \}} \big]$.
  By \eqref{0003}, on the event $\{ H_A > T_{F_A} \}$ the simple random
  walk started at $x$ is at distance $s_n$ from $x$ when it leaves $F_A$.
  Therefore
  \begin{equation*}
    E_x^\finitegraph \big[G_\finitegraph(X_{T_{F_A}},x) \bbone_{\{ H_A >
          T_{F_A} \}} \big]
    \leq  \sup_{z\in S_\finitegraph(x,s_n)}\!
    G_\finitegraph(z,x) \, \underbrace{P_x^\finitegraph[H_A> T_{F_A}]}_{\leq 1}
    \stackrel[\eqref{471}]{\eqref{30}}{\leq}  c(\ln(N_n))^{-8}.
  \end{equation*}
  Thus we have
  \begin{equation}
    \label{551}
    \begin{split}
      \overline{U}_{A,x}^\finitegraph
      &\overset{\phantom{\eqref{1.7}}}{\leq} \Big|  G_\finitegraph(x,x)  -
      E_x^\finitegraph \big[G_\finitegraph(X_{T_{F_A}},x) \big]  - \tfrac{d}{d-1}
      \Big|  + c(\ln(N_n))^{-8} \\
      &\stackrel[\eqref{0007}]{\eqref{1.7}}{\leq}  \big|  g_\finitegraph^{F_A}(x,x)
      - \tfrac{d}{d-1} \big| + \tfrac{c\ln(N_n)}{N_n} + c(\ln(N_n))^{-8}.
    \end{split}
  \end{equation}
  Note that by assumption $\tx(F_A)=0$. So if we define
  $B \coloneqq B_\treegraph^+(\rot,s_n)
  \setminus \{\rot\} \subseteq \treegraph$
  and take $x_1 \in S_\treegraph^+(\rot,1)$, then by definition we
  have $g_\finitegraph^{F_A}(x,x) = g_\treegraph^{B}(x_1,x_1)$. From
  \eqref{1.4} we see that
  \begin{align*}
    g_\treegraph^{B}(x_1,x_1) &= g_\treegraph(x_1,x_1) -
    E_{x_1}^\treegraph[g_\treegraph(X_{T_B},x_1)] \\
    &= g_\treegraph(x_1,x_1) -
    g_\treegraph(\rot,x_1) P_{x_1}^\treegraph[H_\rot=T_B] -
    g_\treegraph(z,x_1) P_{x_1}^\treegraph[H_\rot>T_B]
  \end{align*}
  for any fixed $z \in
  S_\treegraph^+(\rot,s_n+1)$.
  By \eqref{1.1} this shows that
  \begin{equation*}
    g_\treegraph^{B}(x_1,x_1) = \frac{d-1}{d-2} - \frac{d-1}{d-2} \frac1{d-1}
    P_x^\finitegraph[H_A = T_{F_A}] - \frac{d-1}{d-2} \big( \frac1{d-1} \big)^{s_n}
    P_x^\finitegraph[H_A > T_{F_A}].
  \end{equation*}
  So we have obtained
  \begin{equation*}
    \begin{split}
      \big|  g_\finitegraph^{F_A}(x,x)   - \tfrac{d}{d-1} \big|
      &\overset{\phantom{\eqref{0007}}}{\leq} \big|  \tfrac{d-1}{d-2} - \tfrac{1}{d-2}
      P_x^\finitegraph[H_A = T_{F_A}]   - \tfrac{d}{d-1} \big|  + \tfrac{d-1}{d-2}
      \big( \tfrac1{d-1} \big)^{s_n} P_x^\finitegraph[H_A > T_{F_A}] \\
      &\stackrel[\eqref{471}]{\eqref{0007}}{\leq} \tfrac{1}{d-2} c(\ln(N_n))^{-7}  +
      c(\ln(N_n))^{-8}.
    \end{split}
  \end{equation*}
  This, together with \eqref{551} shows
  $\overline{U}_{A,x}^\finitegraph \leq c(\ln(N_n))^{-7}$.

  We turn to $\overline{V}_{A,x}^\finitegraph$. By \eqref{1.8} there
  exists $c>0$ such that
  $\sup_{y,z\in \finitegraph}G_\finitegraph(y,z) \leq c$. Therefore
  $\overline{V}_{A,x}^\finitegraph
  \leq c  \,P_x^\finitegraph[T_{F_A}<H_A < T_{F_A}+t_n]$
  and so \eqref{0005} implies
  $\overline{V}_{A,x}^\finitegraph \leq c_b(\ln(N_n))^{-5}$.

  Finally, we consider $\overline{W}_{A,x}^\finitegraph$.
  Let us define
  \begin{equation*}
    \overline{Z}_{A,x}^\finitegraph \coloneqq \Big|  E_x^\finitegraph
    \big[G_\finitegraph(X_{H_A},x) \bbone_{\{ H_A \geq  T_{F_A} +t_n\}}  \big]
    - P_x^\finitegraph[H_A > T_{F_A}]
    E_\pi^\finitegraph[G_\finitegraph(X_{H_A},x)]    \Big|
  \end{equation*}
  and recall $Y_{A,x}^\finitegraph$ from \eqref{545}.
  Inside $\overline{W}_{A,x}^\finitegraph$ we can add and subtract
  $\tfrac{1}{P_x^\finitegraph[H_A > T_{F_A}]}
  \tfrac{E_x^\finitegraph[H_A]}{E_\pi^\finitegraph[H_A] }  \cdot E_x^\finitegraph
  \big[G_\finitegraph(X_{H_A},x)   \bbone_{\{ H_A \geq  T_{F_A} +t_n\}}
    \big]$
  to obtain
  \begin{equation}
    \label{95}
    \overline{W}_{A,x}^\finitegraph
    \leq \tfrac{E_x^\finitegraph [G_\finitegraph(X_{H_A},x)   \bbone_{\{ H_A
            \geq  T_{F_A} +t_n\}}  ] }{P_x^\finitegraph[H_A > T_{F_A}]}
    \,Y_{A,x}^\finitegraph
    + \tfrac{1}{P_x^\finitegraph[H_A > T_{F_A}]}
    \tfrac{E_x^\finitegraph[H_A]}{E_\pi^\finitegraph[H_A] }
    \overline{Z}_{A,x}^\finitegraph. \\
  \end{equation}
  To the first term on the right hand side of \eqref{95} we apply
  $P_x^\finitegraph[H_A > T_{F_A}] \geq \frac{d-2}{d-1}$ (by
    \eqref{0007}) as well as \eqref{557} and
  $\sup_{y,z\in \finitegraph}G_\finitegraph(y,z) \leq c$ (by
    \eqref{1.8}). For the second term we first observe \eqref{557} and
  then again use $P_x^\finitegraph[H_A > T_{F_A}] \geq \frac{d-2}{d-1}$.
  In this way we obtain
  \begin{equation}
    \label{823}
    \overline{W}_{A,x}^\finitegraph \leq c_b(\ln(N_n))^{-3}  + (1+c_b
      (\ln(N_n))^{-3}) \overline{Z}_{A,x}^\finitegraph.
  \end{equation}
  We proceed to bound $\overline{Z}_{A,x}^\finitegraph$.
  By \eqref{0003} and the strong Markov property it holds
  \begin{align*}
    &E_x^\finitegraph \big[G_\finitegraph(X_{H_A},x)  \bbone_{\{ H_A \geq
          T_{F_A} +t_n\}}  \big]  \\
    &\qquad \qquad  = \sum_{z\in B_\finitegraph(A,s_n)^\mathsf{c}}
    P_x^\finitegraph[\overline{X}_{T_{F_A}}=z, H_A>T_{F_A}]
    E_z^\finitegraph[G_\finitegraph(X_{H_A},x) \bbone_{\{ H_A \geq t_n \}}].
  \end{align*}
  This combined with  \eqref{96} gives
  $\overline{Z}_{A,x}^\finitegraph
  \leq   \sup_{z\in B_\finitegraph(A,s_n)^\mathsf{c}}   \big|
  E_z^\finitegraph[G_\finitegraph(X_{H_A},x) \bbone_{\{ H_A \geq t_n \}}]
  - E_{\pi}^\finitegraph[G_\finitegraph(X_{H_A},x)] \big|$.
  Now for  $z\in B_\finitegraph(A,s_n)^\mathsf{c}$, by the Markov
  property applied at  time $t_n$ and the definition of $E_{\pi}^\finitegraph$,
  \begin{equation*}
    \begin{split}
      &\big| E_z^\finitegraph[G_\finitegraph(X_{H_A},x) \bbone_{\{ H_A \geq t_n
            \}}]  - E_{\pi}^\finitegraph[G_\finitegraph(X_{H_A},x)] \big| \\
      &\leq \sum_{w\in \finitegraph}
      E_w^\finitegraph[G_\finitegraph(X_{H_A},x)]  \cdot  \big|  P_z^\finitegraph[
        \overline{X}_{t_n}=w , H_A \geq t_n]   -  \tfrac1{N_n} \big|
      \overset{\eqref{0006}}{\leq} c_b(\ln(N_n))^{-5},
    \end{split}
  \end{equation*}
  where in the last inequality we again use
  $\sup_{y,w\in \finitegraph}G_\finitegraph(y,w) \leq c$ by \eqref{1.8}.
  All in all we have shown
  $\overline{Z}_{A,x}^\finitegraph \leq c_b(\ln(N_n))^{-5}$. Thus by
  \eqref{823} we deduce
  $\overline{W}_{A,x}^\finitegraph \leq c_b(\ln(N_n))^{-3}$ and
  \eqref{549} is shown. This concludes the proof of  Proposition
  \ref{535} and Section \ref{sectionconditionaldistribution}.
\end{proof}



\section{Microscopic components in the subcritical phase}
\label{subcriticalsection}

We start the analysis of level-set percolation of the zero-average
Gaussian free field $\Psi_\finitegraph$ on $\finitegraph$. The goal of
this section is to show \eqref{0.6} in the form of Theorem
\ref{microscopiccomponents} below, i.e.~the existence of a subcritical
phase in which, with high probability for large $n$,  level sets of
$\Psi_\finitegraph$ only have connected components of cardinality at most
logarithmic in the size of the graph. To precisely state the result, we
recall from the introduction the critical value  $h_\star$  for level-set
percolation of the Gaussian free field $\varphi_\treegraph$ on
$\treegraph$ (see \eqref{0.5}) and also the notation
$E_{\Psi_\finitegraph}^{\geq h}$ for the level set of $\Psi_\finitegraph$
above level $h \in \mathbb R$ (see  \eqref{0001}). For $h\in \mathbb R$
we further denote by $\mathcal C_{\textup{max}}^{\finitegraph,h}$  an
arbitrary connected component of $E_{\Psi_\finitegraph}^{\geq h}$ with
maximal number of vertices. We will only be interested in its
cardinality. Moreover, for $x\in \finitegraph$ and $h\in \mathbb R$ we
define $\mathcal C_{x}^{\finitegraph,h}$ to be the connected component of
$E_{\Psi_\finitegraph}^{\geq h}$ containing $x$. The main result of this
section is

\begin{theorem}
  \label{microscopiccomponents}
  Let $h>h_\star$. Then for all $\kappa > 0$  there exist $c_{h,\kappa}>0$
  and $K_{h,\kappa}>0$ such that for all $n\geq 1$
  \begin{equation*}
    \mathbb P^\finitegraph \big[ |\mathcal C_{\textup{max}}^{\finitegraph,h}| \geq
      K_{h,\kappa} \ln(N_n)  \big] \leq   c_{h,\kappa} N_n^{-\kappa}.
  \end{equation*}
  In particular,  for some $K_h>0$ one has
  $\lim_{n\to \infty} 
  \mathbb P^\finitegraph \big[ |\mathcal C_{\textup{max}}^{\finitegraph,h}|
    \leq K_h \ln(N_n)  \big] =1$.
\end{theorem}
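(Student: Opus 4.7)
The strategy, already outlined in the introduction, is a union bound over the $N_n$ possible starting vertices combined with, for each fixed $x\in\finitegraph$, a bound of the form $\mathbb P^\finitegraph[|\mathcal C_x^{\finitegraph,h}|\geq K\ln(N_n)]\leq c\,N_n^{-(\kappa+1)}$. The control on $|\mathcal C_x^{\finitegraph,h}|$ will come from an exploration of $\mathcal C_x^{\finitegraph,h}$ started at $x$, whose output will be coupled to a short i.i.d.\ family of subcritical cluster sizes for $\varphi_\treegraph$ at a slightly lower level $h-\varepsilon>h_\star$.

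Concretely, I would run the exploration algorithm (Algorithm~\ref{algorithm}) from $x$, which at each step queries the value $\Psi_\finitegraph(y)$ at a \emph{good} boundary vertex $y$ of the already-explored set $A$, in the sense of \eqref{485}--\eqref{473}. Whenever either no good boundary vertex is available (two branches approach within $s_n$ of each other, or a cycle is threatened by assumption~\eqref{1}), the current piece of tree is closed off and a new subtree is started from elsewhere on the frontier. This decomposes $\mathcal C_x^{\finitegraph,h}$ into disjoint subtrees $T_1,\ldots,T_M$ of $\finitegraph$. Assumption~\eqref{1} together with Lemma~\ref{532} (borrowed from \cite{21}) will bound $M$, on the event that the component is of polylog size, by a polylog in $N_n$.

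The key probabilistic input is Proposition~\ref{535}: as long as $|A|\leq b\ln(N_n)$ and $\sup_{z\in A}|\Psi_\finitegraph(z)|\leq b'\sqrt{\ln(N_n)}$ throughout the exploration, each newly revealed value $\Psi_\finitegraph(y)$ is conditionally Gaussian with mean $\tfrac{1}{d-1}\Psi_\finitegraph(\overline y)+O((\ln N_n)^{-2})$ and variance $\tfrac{d}{d-1}+O((\ln N_n)^{-3})$, matching the recursion~\eqref{2544}--\eqref{837} that characterises the Gaussian free field on $\treegraph$. Iterating within each subtree, I would build (this is the content of Lemma~\ref{806}) a coupling of $(\Psi_\finitegraph(y))_{y\in T_i}$ with an independent copy $\varphi^{(i)}_\treegraph$ of the tree free field, rooted at the vertex where $T_i$ started, such that the two fields differ by at most $O((\ln N_n)^{-2})$ vertex-by-vertex. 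Absorbing this error into the $\varepsilon$-gap between $h$ and $h-\varepsilon$, one obtains $T_i\subseteq\mathcal C_\rot^{\treegraph,h-\varepsilon,(i)}\cap\treegraphplus$ (Corollary~\ref{841}), and hence
\begin{equation*}
|\mathcal C_x^{\finitegraph,h}|\,\leq\,\sum_{i=1}^{M}\bigl|\mathcal C_\rot^{\treegraph,h-\varepsilon,(i)}\cap\treegraphplus\bigr|
\end{equation*}
with the summands i.i.d.\ conditional on the ``nice'' event.

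To conclude, I would choose $\varepsilon>0$ so that $h-\varepsilon>h_\star$ and apply an exponential Markov inequality using the subcritical exponential moment bound~\eqref{2513} for the tree cluster at level $h-\varepsilon$. Combined with the polylog bound on $M$, this yields decay of $\mathbb P^\finitegraph[|\mathcal C_x^{\finitegraph,h}|\geq K\ln(N_n)]$ faster than any polynomial in $N_n$ for $K$ large, as needed. The exceptional events --- the Gaussian supremum $\sup_{\finitegraph}|\Psi_\finitegraph|$ exceeding $b'\sqrt{\ln(N_n)}$, or the exploration needing to re-start too often --- contribute negligibly by Gaussian tails and Lemma~\ref{532} respectively. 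The main obstacle is to make all of the above \emph{self-consistent} along the exploration: one must arrange the stopping rule of the algorithm so that, on the event one is trying to exclude, the hypotheses $|A|\leq b\ln(N_n)$ and $\sup_A|\Psi_\finitegraph|\leq b'\sqrt{\ln(N_n)}$ of Proposition~\ref{535} do remain valid step by step. This is the substitute, in a regime where the cluster is not a Galton--Watson tree and the standard random-walk-with-negative-drift argument is unavailable, for the usual termination-time analysis.
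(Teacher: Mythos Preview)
Your proposal is correct and follows the paper's argument essentially step for step: union bound to a single vertex, Algorithm~\ref{algorithm} with its good/bad vertex dichotomy, the deterministic polylog bound on the number of subtrees from Lemma~\ref{532}, the coupling of each subtree to an independent tree free field via Proposition~\ref{535} and Lemma~\ref{806}, the domination $|\mathsf{T}^{y_i}_{\textup{end}}|\leq Z^i$ of Corollary~\ref{841}, and the exponential Markov inequality driven by the subcritical moment bound~\eqref{2513}. Two minor points worth correcting: the $Z^i$ in Corollary~\ref{841} are distributed as the \emph{full} cluster $|\mathcal C_\rot^{\treegraph,h-\varepsilon}|$ (not restricted to $\treegraphplus$), and their independence holds conditionally on the root values $\psi(y_1),\dots,\psi(y_{k_{\textup{end}}})$ rather than on the ``nice'' event; but neither affects the argument.
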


Before explaining the details of the proof of Theorem
\ref{microscopiccomponents}, let us make the basic observation that a
union bound reduces the problem to show  that for $h>h_\star$ and for all
$\kappa > 0$ there exist $c_{h,\kappa}>0$ and $K_{h,\kappa}>0$ such that
for all $n\geq 1$ and $x\in \finitegraph$
\begin{equation}
  \label{556}
  \mathbb P^\finitegraph \big[ |\mathcal C_{x}^{\finitegraph,h}| \geq
    K_{h,\kappa} \ln(N_n)  \big] \leq   c_{h,\kappa} N_n^{-1-\kappa}.
\end{equation}
So it remains to show \eqref{556}. We will make use of a certain
exploration process exploring $\mathcal C_{x}^{\finitegraph,h}$ for a
fixed $x \in \finitegraph$. This will enable us to control
$\mathbb P^\finitegraph \big[ |\mathcal C_{x}^{\finitegraph,h}|
  \geq K_{h,\kappa} \ln(N_n)  \big]$.
A similar approach has for example been followed in \cite{21} to prove a
result analogous  to the above Theorem \ref{microscopiccomponents}  but
for the vacant set of simple random walk on $\finitegraph$ in place of
the level set of the zero-average Gaussian free field.

We now give the idea of the proof of \eqref{556}. The details of the
exploration process itself are given afterwards. A crucial ingredient is
the precise understanding of the conditional distribution of the
zero-average Gaussian free field on non-explored vertices given its value
on already explored vertices. As we have seen in Proposition \ref{535} in
Section \ref{sectionconditionaldistribution}, under certain geometric
conditions the conditional distribution of $\Psi_\finitegraph$ shows
strong similarities with the conditional distribution of the Gaussian
free field $\varphi_\treegraph$ on $\treegraph$. While exploring
$\mathcal C_{x}^{\finitegraph,h}$, the exploration process will separate
the vertices found in $\mathcal C_{x}^{\finitegraph,h}$ into a union of
rooted disjoint subtrees of $\finitegraph$ in which all vertices except
for the root satisfy the aforementioned geometric conditions. In this way
we reduce the proof of \eqref{556} to a control of the number of vertices
contained in these union of subtrees (Proposition~\ref{562}). As a result
from \cite{21} shows (see also Lemma \ref{532}), the number of steps the
exploration process encounters a situation in which the geometric
assumptions fail to be satisfied is not too large. This controls the
number of distinct subtrees created by the exploration process because in
each subtree there is exactly one vertex  which does not satisfy the
conditions (its root). Since the other vertices of a subtree satisfy the
geometric conditions, we can employ the  similarity between the
conditional distribution of $\Psi_\finitegraph$ and $\varphi_\treegraph$
to couple the zero-average Gaussian free field on each distinct subtree
separately with an independent copy of the Gaussian free field
$\varphi_\treegraph$ on $\treegraph$ (Lemma \ref{806}). This translates
the question about the number of vertices contained in the disjoint
subtrees into the number of vertices contained in connected components of
the level set of $\varphi_\treegraph$ (Corollary~\ref{841}). A result
from \cite{AC1} (recalled in \eqref{2513}) about exponential moments of
the size of these connected components then ultimately leads to  the
proof of Proposition \ref{562} and hence of \eqref{556}.

We now describe the  exploration process exploring
$\mathcal C_{x}^{\finitegraph,h}$ for a fixed $x \in \finitegraph$ and to
facilitate the discussion we include a concrete algorithm implementing it
(Algorithm \ref{algorithm}). The exploration process is a \emph{modified}
breadth-first-search  that discovers the field $\Psi_\finitegraph$ on the
graph step by step. It employs \emph{two} queues (a primary and a
  secondary one) that work in the usual first-in-first-out manner and
store the vertices to be explored. The exploration process starts by
revealing $\Psi_\finitegraph(x)$. The vertices where $\Psi_\finitegraph$
has been revealed are called explored and they can be either part of
$\mathcal C_{x}^{\finitegraph,h}$ or not. If a vertex is explored and is
revealed to be part of $\mathcal C_{x}^{\finitegraph,h}$,  then its
neighbours which are neither already explored nor already in one of the
two queues are added to the primary queue. To avoid ambiguity, we suppose
that the vertices of $\finitegraph$ are equipped with some ordering and
that they   are added to the queue  following  this ordering. Vertices
taken out of the primary queue are first checked to be \emph{good
  vertices at the boundary of the so far explored vertices} (recall
  \eqref{850} and above it for the definition): if they are, the
exploration process proceeds with their exploration; if they are not,
they are transferred to the secondary queue and their exploration is
postponed. The first vertex in the secondary queue is only taken out to
be explored if the primary queue is empty.

To formalise this exploration process we now give an algorithm
implementing it (see Algorithm \ref{algorithm} below). The algorithm
constructs on some auxiliary probability space
$(\Omega,\mathcal A,\mathbb P)$ a family of random variables
$(\psi(z))_{z\in B}$ such that $(\psi(z))_{z\in B}$ under $\mathbb P$ has
the same distribution as $(\Psi_\finitegraph(z))_{z\in B}$ under
$\mathbb P^\finitegraph$. Here $B\subseteq \finitegraph$ is some (random)
connected set of vertices containing $x$. We use $\mathsf{PQ}$,
$\mathsf{SQ}$ and $\mathsf{E}$ to denote the evolving sets of vertices in
the primary queue, vertices in the secondary queue and explored vertices
during the run of the algorithm. Furthermore, we also keep track of the
explored vertices $z\in \mathsf{E}$ for which $\psi(z)\geq h$ using the
set $\mathsf{C} \subseteq \mathsf{E}$. Additionally to the exploration,
the algorithm aggregates the vertices discovered to be in $\mathsf{C}$
into disjoint subtrees $(\mathsf{T}^y)_y$ of $\finitegraph$ indexed by
\emph{bad vertices} $y\in \finitegraph$ (meaning they were in
  $\mathsf{SQ}$ at some point of the algorithm). Moreover, the algorithm
stops for one of two reasons: either because both the primary and
secondary queue are empty, or because it already discovered that
$\mathsf{C}$ has at least size $K_{h,\kappa} \ln(N_n)$ for some
$K_{h,\kappa}$ to be specified later (below \eqref{809}).

We need some more notation for the algorithm. Let
$(\xi_z)_{z\in\finitegraph}$ be i.i.d.~standard normal random variables
on the auxiliary probability space $(\Omega,\mathcal A,\mathbb P)$. For
$A\subseteq \finitegraph$ non-empty and $u\in \finitegraph$ we abbreviate
by  $a(u,\psi, A)$ the right hand side of \eqref{449} where $x$ and
$\Psi_\finitegraph$ are replaced by $u$ and $\psi$. In particular,
$a(u,\psi, A)$ is a random variable measurable with respect to
$\sigma(\psi(w),w\in A)$. By $b(u,A)$ we abbreviate the right hand side
of \eqref{456} where $x$ is replaced by $u$. For $A=\emptyset$ and
$u\in \finitegraph$ we define $a(u,\psi, \emptyset) \coloneqq 0$ and
$b(u,\emptyset) \coloneqq G_\finitegraph(u,u)$. By Lemma
\ref{conditionaldistribution} and the fact that $\psi$ is a Gaussian
field, we have that
\begin{equation}
  \label{94}
  \parbox{12cm}{
    for $A\subseteq \finitegraph$ and $u\in \finitegraph$ the random variable
    $a(u,\psi,A) + \xi_u \cdot b(u,A)^\frac12$ under $\mathbb P$ has the same
    distribution as $\Psi_\finitegraph(u)$ conditional on
    $\sigma(\Psi_\finitegraph(w), w\in A)$ under $\mathbb P^\finitegraph$.
  }
\end{equation}
The algorithm is as follows:

\begin{algorithm}[H]
  \caption{}
  \label{algorithm}
  \begin{algorithmic}[1]
    \State set $\mathsf{PQ}\coloneqq \emptyset$, $\mathsf{SQ} \coloneqq
    \{x\}$, $\mathsf{E} \coloneqq \emptyset$, $\mathsf{C} \coloneqq \emptyset$ and
    also $\mathsf{T}^w \coloneqq \emptyset$ for all $w\in \finitegraph$
    \While {secondary queue $\mathsf{SQ}$ is not empty}
    \State   take vertex $y$ out of $\mathsf{SQ}$
    \State   generate the random variable $\psi(y) \coloneqq
    a(y,\psi,\mathsf{E}) + \xi_y \cdot b(y,\mathsf{E})^\frac12$
    \State add $y$ to the set $\mathsf{E}$ of explored vertices
    \If{$\psi(y) \geq h$}
    \State add $y$ to the subtree $\mathsf{T}^y$ and to the
    set  $\mathsf{C}$
    \If {$|\mathsf{C}|\geq K_{h,\kappa} \ln(N_n)$}
    stop the algorithm
    \EndIf
    \State add all neighbours of $y$ which are
    neither already explored nor in any of the \phantom{atsdaf} two  queues to the
    primary  queue $\mathsf{PQ}$
    \While {primary queue $\mathsf{PQ}$ is not empty}
    \State take vertex $z$ out of $\mathsf{PQ}$
    \If {$z$ is not a good vertex at the boundary
      of $\mathsf{E}$, that is, $z \notin G_{\mathsf{E}}$,}
    \State add $z$ to the secondary queue
    $\mathsf{SQ}$
    \Else
    \State generate the random variable
    $\psi(z) \coloneqq a(z,\psi,\mathsf{E}) + \xi_z \cdot b(z,\mathsf{E})^\frac12$
    \State add $z$ to the set $\mathsf{E}$
    of explored vertices
    \If{$\psi(z)\geq h$}
    \State add $z$ to the subtree
    $\mathsf{T}^y$ and to the set $\mathsf{C}$
    \If {$|\mathsf{C} | \geq
      K_{h,\kappa} \ln(N_n)$} stop the algorithm
    \EndIf
    \State add all neighbours of
    $z$ which are neither already explored nor in \phantom{asdfasdftadasadf} any of
    the two queues to the primary queue $\mathsf{PQ}$
    \EndIf
    \EndIf
    \EndWhile
    \EndIf
    \EndWhile
  \end{algorithmic}
\end{algorithm}

Let  $\mathsf{E}_\textup{end}$, $\mathsf{C}_\textup{end}$ and
$\mathsf{T}_\textup{end}^w$, $w\in \finitegraph$, denote the sets
$\mathsf{E}$, $\mathsf{C}$ and $\mathsf{T}^w$, $w\in \finitegraph$, at
the end of the algorithm.  By that moment  we have constructed
$(\psi(z))_{z\in \mathsf{E}_\textup{end}}$ and (see \eqref{94})
\begin{equation}
  \label{555}
  \begin{split}
    &\text{$(\psi(z))_{z\in \mathsf{E}_\textup{end}}$ under $\mathbb P$ has the
      same distribution as $(\Psi_\finitegraph(z))_{z\in \mathsf{E}_\textup{end}}$
      under $\mathbb P^\finitegraph$.}
  \end{split}
\end{equation}
By construction of the algorithm one has
$|\mathsf{C}_\textup{end}| \leq K_{h,\kappa} \ln(N_n)+1$ and so by
\eqref{0} also
\begin{equation}
  \label{851}
  |\mathsf{E}_\textup{end}| \leq d (K_{h,\kappa} \ln(N_n)+1).
\end{equation}
This is due to $\mathsf{E} \subseteq B_\finitegraph(\mathsf{C},1)$ with
$B_\finitegraph(\emptyset,1)\coloneqq \{x\}$ holding at any moment of the
algorithm since a vertex can only get explored (except for $x$) if at
some point it was added to a queue, meaning it was a neighbour of a
vertex added to $\mathsf{C}$.

Note that, whenever some $y\in \finitegraph$ is taken out of $\mathsf{SQ}$
on line 3 of the algorithm (a bad vertex), one has
$\mathsf{PQ}= \emptyset$ at that moment by construction. Until the next
bad vertex is taken out of $\mathsf{SQ}$, all vertices
$z \in \finitegraph$ considered by the algorithm and which are found to
be good and in $\mathsf{C}$ will be part of $\mathsf{T}_\textup{end}^y$.
So if  $y_1,\ldots,y_{k_\textup{end}}$ denote the successive vertices
that were taken out of $\mathsf{SQ}$ during the algorithm, then
$\mathsf{C}_\textup{end} = \bigcup_{i=1}^{k_\textup{end}} \mathsf{T}_\textup{end}^{y_i}$.
In particular, $y_1= x$ and $k_\textup{end}$ is the total number of bad
vertices encountered by the algorithm.

Furthermore, on the event that the algorithm terminates because both
queues become empty (and not because at some point
  $|\mathsf{C}_\textup{end}| \geq K_{h,\kappa}\ln(N_n)$), note that
$|\mathsf{C}_\textup{end}|$  has the same distribution as
$|\mathcal C_{x}^{\finitegraph,h}|$ under $\mathbb P^\finitegraph$ by
\eqref{555}. Therefore
$\mathbb P\big[ |\mathsf{C}_\textup{end}| < K_{h,\kappa}\ln(N_n)\big]
= \mathbb P^\finitegraph \big[ |\mathcal C_{x}^{\finitegraph,h}|
  < K_{h,\kappa} \ln(N_n)  \big]$.

We want to distinguish the situation in which the field $\psi$ produced
by the algorithm has anomalous  values, meaning $|\psi(z)| \geq M_n$ for
some $z\in \mathsf{E}_\textup{end}$ and $M_n>0$. We are going to specify
this value now. Note that for any $\kappa>0$ there is $c_\kappa>0$ such that
\begin{equation}
  \label{531}
  \mathbb P^\finitegraph \Big[  \sup_{z \in \finitegraph} | \Psi_\finitegraph(z)
    | \geq c_\kappa\sqrt{\ln(N_n)} \Big]
  \leq  2 N_n^{-1-\kappa} \quad \text{for all $n\geq 1$.}
\end{equation}
This can be shown by the same computations as in \cite{7}, equations
(2.35)--(2.38), replacing $g(0)$ therein with $\sup_{z\in \finitegraph}
G_\finitegraph(z,z)$, which is bounded by $3\frac{d-1}{d-2}$ (see \eqref{30}).
Use also use $\mathbb P^\finitegraph \big[ \sup_{z \in \finitegraph} |
  \Psi_\finitegraph(z) | \geq a \big]
\leq 2 \mathbb P^\finitegraph \big[  \sup_{z \in \finitegraph}
  \Psi_\finitegraph(z)  \geq a \big]$ for \eqref{531}. We set
\begin{equation*}
  M_n \coloneqq c_\kappa \sqrt{\ln(N_n)}.
\end{equation*}
So one has
\begin{equation*}
  \begin{split}
    &\mathbb P^\finitegraph \big[ |\mathcal C_{x}^{\finitegraph,h}| \geq
      K_{h,\kappa} \ln(N_n)  \big]
    = \mathbb P\big[ |\mathsf{C}_\textup{end}| \geq K_{h,\kappa}\ln(N_n)\big]  \\
    &\overset{\phantom{\eqref{555}}}{\leq} \mathbb P\big[
      |\mathsf{C}_\textup{end}| \geq K_{h,\kappa}\ln(N_n) \, ,   \sup_{z\in
        \mathsf{E}_\textup{end}} |\psi(z)| < M_n \big] + \mathbb P \big[  |\psi(z)|
      \geq M_n \text{ for some $z \in \mathsf{E}_\textup{end}$} \big] \\
    &\stackrel[\eqref{531}]{\eqref{555}}{\leq} \mathbb P\Big[
      \sum_{i=1}^{k_\textup{end}} | \mathsf{T}_\textup{end}^{y_i}| \geq
      K_{h,\kappa}\ln(N_n) \, ,    \sup_{z\in \mathsf{E}_\textup{end}} |\psi(z)| <
      M_n \Big] + 2 N_n^{-1-\kappa}.
  \end{split}
\end{equation*}
Thus in order to show \eqref{556} and ultimately Theorem
\ref{microscopiccomponents} we need to show

\begin{proposition}
  \label{562}
  Let $h>h_\star$. Then for all $\kappa>0$ there exist $c_{h,\kappa}>0$
  and $K_{h,\kappa}>0$ such that for all $n\geq 1$ and $x\in \finitegraph$
  one has for the Algorithm \ref{algorithm} above
  \begin{equation}
    \label{810}
    \mathbb P\Big[ \sum_{i=1}^{k_\textup{end}} | \mathsf{T}_\textup{end}^{y_i} |
      \geq K_{h,\kappa}\ln(N_n) \, , \sup_{z\in \mathsf{E}_\textup{end}} |\psi(z)| <
      M_n \Big]  \leq c_{h,\kappa} N_n^{-1-\kappa}.
  \end{equation}
\end{proposition}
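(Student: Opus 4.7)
The plan combines the three ingredients outlined just before the proposition: (i) the high-probability bound on the number $k_\textup{end}$ of bad vertices encountered by Algorithm~\ref{algorithm}, provided by Lemma~\ref{532} (borrowed from \cite{21}); (ii) the coupling Lemma~\ref{806}, producing for each subtree $\mathsf T_\textup{end}^{y_i}$ an independent copy of $\varphi_\treegraph$ on $\treegraph$; and (iii) the exponential moment bound \eqref{2513} applied at the slightly lower level $h-\varepsilon$, where $\varepsilon>0$ is fixed small enough that $h-\varepsilon>h_\star$.

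Concretely, I first choose $K_0=K_0(d,\alpha,\beta,\kappa)$ such that Lemma~\ref{532} yields an event of probability at least $1-c\,N_n^{-1-\kappa}$ on which $k_\textup{end}\le K_0$. This reduces \eqref{810} to bounding its probability intersected with $\{k_\textup{end}\le K_0\}\cap\{\sup_{z\in\mathsf E_\textup{end}}|\psi(z)|<M_n\}$. On this event, Proposition~\ref{535} places the conditional law of $\psi$ at each successive good vertex within error $O((\ln N_n)^{-2})$ of the tree-GFF update rule $\tfrac{1}{d-1}\varphi_\treegraph(\overline{x})+\mathcal N(0,\tfrac{d}{d-1})$, uniformly over all admissible explored states. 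Iterating these near-tree updates along each subtree rooted at a bad vertex $y_i$ and using that the innovations $(\xi_u)_u$ driving Algorithm~\ref{algorithm} are independent across vertices, Lemma~\ref{806} and Corollary~\ref{841} deliver a coupling under which, conditionally on $\psi(y_i)$, the size $|\mathsf T_\textup{end}^{y_i}|$ is stochastically dominated by $|\mathcal C_\rot^{\treegraph,h-\varepsilon}\cap\treegraphplus|$ sampled under $\mathbb P_{\psi(y_i)}^\treegraph$, with the dominating variables \emph{independent} across distinct $i$.

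Given this domination, an exponential Chebyshev inequality with parameter $1+\delta_{h-\varepsilon}$ from \eqref{2513} yields
\begin{equation*}
\mathbb P\Bigl[\textstyle\sum_{i=1}^{k_\textup{end}}|\mathsf T_\textup{end}^{y_i}|\ge K_{h,\kappa}\ln N_n,\ k_\textup{end}\le K_0,\ \sup|\psi|<M_n\Bigr]\ \le\ (1+\delta_{h-\varepsilon})^{-K_{h,\kappa}\ln N_n}\,\mathbb E\Bigl[\textstyle\prod_{i=1}^{k_\textup{end}}g_{h-\varepsilon}(\psi(y_i))\,\bbone_{\{\sup|\psi|<M_n\}}\Bigr].
\end{equation*}
On $\{\sup|\psi|<M_n\}$ the tail bound at the end of \eqref{2513} gives $g_{h-\varepsilon}(\psi(y_i))\le c\exp(c'M_n^{3/2})=\exp(O((\ln N_n)^{3/4}))$, so the product over the at most $K_0$ indices is sub-polynomial in $N_n$. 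Meanwhile $(1+\delta_{h-\varepsilon})^{-K_{h,\kappa}\ln N_n}=N_n^{-K_{h,\kappa}\ln(1+\delta_{h-\varepsilon})}$ beats any polynomial once $K_{h,\kappa}\ln(1+\delta_{h-\varepsilon})>1+\kappa$, which settles \eqref{810}.

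The main obstacle is step (ii): one must simultaneously arrange the \emph{independence} of the dominating $\varphi_\treegraph$ copies across distinct subtrees and absorb the accumulated $O((\ln N_n)^{-2})$ error from Proposition~\ref{535}, over up to $K_{h,\kappa}\ln N_n$ exploration steps, into the passage from level $h$ to $h-\varepsilon$. This is precisely the content of Lemma~\ref{806} and Corollary~\ref{841}, which rely on the conditional-distribution computations of Section~\ref{sectionconditionaldistribution} together with the recursive construction \eqref{2544}--\eqref{2545} of $\varphi_\treegraph$.
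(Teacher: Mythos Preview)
Your overall strategy matches the paper's: couple via Corollary~\ref{841}, apply an exponential Chebyshev inequality with base $1+\delta_{h-\varepsilon}$, and control each factor using the tail estimate in \eqref{2513}. However, you misread Lemma~\ref{532}. It is not a high-probability statement producing $k_\textup{end}\le K_0$ for some constant $K_0=K_0(d,\alpha,\beta,\kappa)$; it is a \emph{deterministic} bound $k_\textup{end}\le k_\textup{max}\coloneqq c_1 K_{h,\kappa}\,s_n^2$, where $s_n\asymp\log\log N_n$ (see \eqref{471}). In particular $k_\textup{max}$ grows with $n$ and, crucially, is proportional to the very constant $K_{h,\kappa}$ you are trying to choose at the end, so your separation of ``first fix $K_0$, then pick $K_{h,\kappa}$'' is circular as written.

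This does not break the argument, but the arithmetic must be redone. With the correct $k_\textup{max}$, the product of the $k_\textup{max}$ factors is at most $\exp\bigl(c\,K_{h,\kappa}(\ln N_n)^{7/8}\bigr)$ rather than the $K_0$-fold sub-polynomial you claim (the exponent coming from $(\ln N_n)^{3/4}\cdot s_n^2$, with $s_n^2=o((\ln N_n)^{1/8})$). The leading factor $(1+\delta_{h-\varepsilon})^{-K_{h,\kappa}\ln N_n}$ still contributes $\exp(-c'K_{h,\kappa}\ln N_n)$, so for large $n$ the linear-in-$\ln N_n$ term beats the $(\ln N_n)^{7/8}$ correction and one can then take $K_{h,\kappa}$ large. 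This is exactly how the paper closes the proof in \eqref{574}--\eqref{809}. A minor side remark: Corollary~\ref{841} dominates $|\mathsf T_\textup{end}^{y_i}|$ by the \emph{full} cluster $|\mathcal C_\rot^{\treegraph,h-\varepsilon}|$ under $\mathbb P^\treegraph_{\psi(y_i)}$, not by the forward cluster in $\treegraphplus$; the passage to $g_{h-\varepsilon}$ requires the decomposition \eqref{807}, which you have skipped.
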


The proof of Proposition \ref{562} relies on the following two lemmas.
The first one (Lemma~\ref{532}, already proven in \cite{21}) bounds the
number of bad vertices $k_\textup{end}$ encountered by Algorithm
\ref{algorithm}, that is, the number of vertices of $\finitegraph$ that
at some point during the run of the algorithm were in the secondary queue
$\mathsf{SQ}$. The second one (Lemma~\ref{806}) constructs for each
$i=1,\ldots,k_\textup{end}$ a coupling of $\psi$ on
$\mathsf{T}_\textup{end}^{y_i}$ with an independent copy of
$\varphi_\treegraph$, showing that $\psi$ on
$\mathsf{T}_\textup{end}^{y_i}$ can be approximated by
$\varphi_\treegraph$. This makes use of  Proposition~\ref{535}. Via
Corollary \ref{841}   of Lemma \ref{806}  we then prove Proposition
\ref{562}.

\begin{lemma}
  \label{532}
  There exists $c_1>0$  such that for all $n\geq 1$ and
  $x\in \finitegraph$ one has for the above Algorithm \ref{algorithm}
  that
  $k_\textup{end} \leq c_1 K_{h,\kappa} s_n^2 \eqqcolon k_\textup{max}$
  (recall that $s_n$ is given in \eqref{471}).
\end{lemma}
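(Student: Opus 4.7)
The plan is to recycle the combinatorial argument from the analogous bound for the vacant-set exploration in~\cite{21} after observing that the topology of the exploration here is essentially the same: the transfer of a vertex from $\mathsf{PQ}$ to $\mathsf{SQ}$ is driven purely by the geometric conditions~\eqref{485}--\eqref{473}, which do not involve the random field $\psi$ at all. In particular, Lemma~\ref{532} is really a deterministic statement about a breadth-first-search-type exploration on a graph with few short cycles, and it can be read off directly from the analysis in~\cite{21} once the correspondence is made explicit. I would therefore write the proof as essentially a reduction to the combinatorial lemma already established there.

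The first step of the proof is to convert the failure of each of \eqref{485}--\eqref{473} into the presence of a short cycle. If \eqref{485} fails at a vertex $z$, then $z$ has two distinct neighbours in the connected set $\mathsf{E}$, which are already joined through $\mathsf{E}$ and therefore close up into a cycle through $z$ of length at most $2|\mathsf{E}_{\text{end}}|+2$. If \eqref{474} fails, then $F_{\mathsf{E}}(z,s_n)$ itself contains a cycle of length at most $2s_n+O(1)$. If \eqref{473} fails, a path in $\finitegraph\setminus\mathsf{E}$ of length at most $2s_n$ inside $B_\finitegraph(\mathsf{E},s_n)$ links $z$ to another boundary vertex, and closing it through $\mathsf{E}$ produces a cycle of length at most $2s_n+2|\mathsf{E}_{\text{end}}|+O(1)$. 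Using~\eqref{851} together with the choice of $s_n$ in~\eqref{471}, all such cycles are contained in balls of radius far below $\alpha\log_{d-1}(N_n)$, so by assumption~\eqref{1} each such ball contains at most one cycle.

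The second step is a combinatorial accounting: each cycle $C$ produced above can be witnessed as a failure of~\eqref{485}--\eqref{473} by at most $O(s_n)$ boundary vertices, because in a tree-like ball containing only one cycle the set of boundary vertices of the connected explored set $\mathsf{E}$ lying within graph distance $s_n$ of $C$ has cardinality at most $O(s_n)$; on the other hand, the total number of distinct cycles that can be encountered during the whole run of Algorithm~\ref{algorithm} is controlled by a similar counting with a factor $O(K_{h,\kappa} s_n)$, since different cycles must be separated by at least $\alpha\log_{d-1}(N_n)-2s_n$ and the exploration only touches $O(K_{h,\kappa}\ln(N_n))$ vertices. Combining the two yields $k_{\text{end}}=O(K_{h,\kappa}s_n^2)$, as claimed.

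The main obstacle is the bookkeeping in the second step: one must avoid double-counting when several bad vertices share the same cycle, handle the three failure cases~\eqref{485}--\eqref{473} on an equal footing, and verify that the ``one cycle per ball'' conclusion of~\eqref{1} really does decouple the contributions of distinct cycles as the exploration proceeds. These points are precisely what is carried out in the corresponding lemma of~\cite{21}, so my proof would conclude by citing that statement after verifying that the events driving the exploration there and here are governed by the same geometric data.
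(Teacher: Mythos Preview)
Your proposal is correct and takes essentially the same approach as the paper: both recognise that the bound is a purely deterministic, combinatorial statement about the geometry of the exploration (independent of the field $\psi$) and reduce it to the corresponding result in~\cite{21} (there Proposition~5.4). The paper simply cites that proposition and notes that its proof is algorithm-independent, whereas you additionally sketch the two-step cycle-counting argument behind it; apart from minor imprecisions in your sketch (e.g.\ $\mathsf{E}$ need not itself be connected---the path closing the cycle runs through $\mathsf{C}$ rather than all of $\mathsf{E}$), the substance is the same.
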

\begin{proof}
  This follows from \cite{21}, Proposition 5.4. Although the algorithm
  employed there does not exactly match our algorithm, the proof does not
  rely on a specific algorithm (as explained in the proof of Proposition
    5.4 in \cite{21}). It is purely deterministic and only uses the
  properties \eqref{0}--\eqref{2} of  $\finitegraph$.
\end{proof}

\begin{lemma}
  \label{806}
  Let $h\in \mathbb R$ and $\varepsilon>0$. Consider  Algorithm
  \ref{algorithm} and recall $k_\textup{max}$ from Lemma~\ref{532}. Then
  on the same  auxiliary space $(\Omega,\mathcal A,\mathbb P)$ as $\psi$
  one can define centred Gaussian fields
  $\phi^1,\ldots,\phi^{k_\textup{max}}$ on $\treegraph$ such that,
  conditionally on $\psi(y_1),\ldots,\psi(y_{k_\textup{end}})$, the
  following properties hold (see \eqref{e:defPa} for notation):
  \begin{flalign}
    \quad \ \, \bullet& \ \, \text{for all $n$ large enough and all
      $i=1,\ldots,k_\textup{end}$ there exists a set $B^i$ with}    &
    \label{800}
    \\
    \quad \ \, & \ \, \text{$\mathsf{T}_\textup{end}^{y_i} \subseteq B^i\subseteq
      B_\finitegraph( \mathsf{T}_\textup{end}^{y_i},1)$ and an injection $\tau^i :
      B^i\to \treegraph$ such that $\tau^i(\mathsf{T}_\textup{end}^{y_i})$}  &
    \nonumber \\
    \quad \ \, & \ \, \text{is a connected subset of $\treegraph$ containing the
      root $\rot\in \treegraph$ and on the event} & \nonumber \\
    \quad \ \, & \ \, \text{$\{\textstyle \sup_{z\in \mathsf{E}_\textup{end}}
        |\psi(z)| < M_n\}$ one has $\big| \psi(z) - \phi^i(\tau^i(z)) \big| \leq
      \varepsilon$ for all $z\in B^i$} &\nonumber \\
    \quad \ \,  \bullet& \ \, \text{$\phi^i$ has the same distribution as
      $\varphi_\treegraph$ under $\mathbb P^\treegraph_{\psi(y_i)}$ for all
      $i=1,\ldots,k_\textup{end}$,} &
    \label{801}   \\
    \quad \ \, & \ \, \text{$\phi^i$ has the same distribution as
      $\varphi_\treegraph$ under $\mathbb P^\treegraph_{M_n}$ for all
      $i=k_\textup{end}+1,\ldots, k_\textup{max}$} &\nonumber  \\
    \quad \ \,  \bullet& \ \, \text{$\phi^1,\ldots,\phi^{k_\textup{max}}$ are
      independent.} &
    \label{802}
  \end{flalign}
\end{lemma}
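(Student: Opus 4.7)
The plan is to build each $\phi^i$ by running the recursive representation \eqref{2544}--\eqref{2545} of $\varphi_\treegraph$ in lock-step with Algorithm~\ref{algorithm}. The crucial input is Proposition~\ref{535}: whenever the algorithm explores a good vertex $z$ with unique neighbour $\overline z\in\mathsf{E}$, the conditional law of $\psi(z)$ given the past is $\mathcal N\bigl(\tfrac1{d-1}\psi(\overline z),\tfrac{d}{d-1}\bigr)$ up to errors of order $(\ln N_n)^{-2}$ in the mean and $(\ln N_n)^{-3}$ in the variance, provided the revealed values stay below $M_n$ and $|\mathsf{E}|\leq c\ln N_n$ (which holds by \eqref{851}). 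Since \eqref{2544} prescribes exactly the same increment law for a child given its parent in $\treegraph$, the same standard normal $\xi_z$ introduced in line~17 of the algorithm to generate $\psi(z)$ can be reused to generate $\phi^i(\tau^i(z))$, yielding automatically a close coupling on the subtree.

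\textbf{Construction and laws.} One first checks that each $\mathsf{T}_\textup{end}^{y_i}$ is a genuine rooted tree in $\finitegraph$: any non-root $z\in\mathsf{T}_\textup{end}^{y_i}$ was added in line~19 while processing a good vertex, and by \eqref{485} its unique neighbour $\overline z\in\mathsf{E}$ must be the vertex of $\mathsf{C}$ that had queued $z$ earlier in the same $y_i$-iteration, hence $\overline z\in\mathsf{T}_\textup{end}^{y_i}$. Define $\tau^i$ inductively by $\tau^i(y_i)\coloneqq\rot$, sending each subsequent $z$ (in exploration order) to a fresh child of $\tau^i(\overline z)$ in $\treegraph$; this is possible since $\finitegraph$ is $d$-regular. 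Enlarging $(\Omega,\mathcal A,\mathbb P)$ with an independent family $\{Z_w^i:w\in\treegraph,\,1\leq i\leq k_\textup{max}\}$ of i.i.d.\ standard normals, set $\phi^i(\rot)\coloneqq\psi(y_i)$ for $i\leq k_\textup{end}$ and $\phi^i(\rot)\coloneqq M_n$ otherwise, and propagate on $\treegraph$ via
\begin{equation*}
  \phi^i(w) \coloneqq \tfrac{1}{d-1}\phi^i(\overline w) + \zeta_w^i\sqrt{\tfrac{d}{d-1}}, \qquad w\in\treegraph\setminus\{\rot\},
\end{equation*}
with $\zeta_w^i\coloneqq\xi_z$ if $w=\tau^i(z)$ for some $z\in\mathsf{T}_\textup{end}^{y_i}\setminus\{y_i\}$ and $\zeta_w^i\coloneqq Z_w^i$ otherwise; we take $B^i\coloneqq\mathsf{T}_\textup{end}^{y_i}$. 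The law \eqref{801} then follows from \eqref{2545} once the $\zeta_w^i$ are shown to be i.i.d.\ standard normals conditionally on $\psi(y_1),\ldots,\psi(y_{k_\textup{end}})$, which in turn rests on \eqref{94}: at every exploration step the driving $\xi_z$ is $\mathcal N(0,1)$ conditionally on the entire history of the algorithm. The independence \eqref{802} follows from the same argument because the subtrees $\mathsf{T}_\textup{end}^{y_i}$ are pairwise disjoint, so the $\xi_z$ feeding distinct $\phi^i$'s are disjoint, and the $Z_w^i$ are independent across $i$.

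\textbf{Error estimate and main obstacle.} Setting $e_z\coloneqq\psi(z)-\phi^i(\tau^i(z))$, with $e_{y_i}=0$, subtracting the two defining recursions yields
\begin{equation*}
  e_z = \tfrac{1}{d-1}\,e_{\overline z} + \Bigl(a(z,\psi,\mathsf{E})-\tfrac{1}{d-1}\psi(\overline z)\Bigr) + \xi_z\Bigl(b(z,\mathsf{E})^{1/2}-\sqrt{\tfrac{d}{d-1}}\Bigr),
\end{equation*}
which Proposition~\ref{535} bounds by $\tfrac{1}{d-1}|e_{\overline z}|+O((\ln N_n)^{-2})+|\xi_z|\cdot O((\ln N_n)^{-3})$ on the event $\{\sup_{\mathsf{E}_\textup{end}}|\psi|<M_n\}$. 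On the same event the identity $\xi_z=(\psi(z)-a(z,\psi,\mathsf{E}))/b(z,\mathsf{E})^{1/2}$, together with the uniform bound $G_\finitegraph\leq c$ from \eqref{1.8} (which controls $|a|\leq cM_n$) and the lower bound on $b$ provided by Proposition~\ref{535}, gives $|\xi_z|\leq c'\sqrt{\ln N_n}$. Iterating the contraction $\tfrac{1}{d-1}<1$ along the unique $y_i$-to-$z$ path in $\mathsf{T}_\textup{end}^{y_i}$ telescopes into $|e_z|\leq\tfrac{d-1}{d-2}\bigl(c(\ln N_n)^{-2}+c'(\ln N_n)^{-5/2}\bigr)\leq\varepsilon$ for all $n$ large, uniformly in $z$ and $i$. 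The main obstacle is the conditional-independence step used to establish \eqref{801}--\eqref{802}: since $\mathsf{T}_\textup{end}^{y_i}$ itself is a random function of the $\xi$'s, the argument must proceed through the filtration generated by successive explorations and repeatedly invoke \eqref{94}, so that the $\zeta_w^i$ jointly behave as the independent Gaussians driving the tree recursion conditionally on $\psi(y_1),\ldots,\psi(y_{k_\textup{end}})$.
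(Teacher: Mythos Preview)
Your overall strategy—reusing the standard normals $\xi_z$ from the algorithm to drive the tree recursion for $\phi^i$—is exactly right, and your error estimate via the contraction $\tfrac{1}{d-1}$ is in fact cleaner than the paper's (which bounds $|e_{z_j}|$ by $j$ times the single-step error and then uses $j\leq|\mathsf{E}_\textup{end}|\leq c\ln N_n$). There is, however, a genuine gap in your construction of $\phi^i$ that prevents \eqref{801} from holding.

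The problem is your choice $B^i=\mathsf{T}_\textup{end}^{y_i}$ together with the rule $\zeta_w^i=\xi_z$ only for $z\in\mathsf{T}_\textup{end}^{y_i}\setminus\{y_i\}$. Whether a good vertex $z$ lands in $\mathsf{T}_\textup{end}^{y_i}$ is precisely the event $\{\psi(z)\geq h\}$, which depends on $\xi_z$ itself. Concretely, suppose $y_i$ has two good neighbours $u_1,u_2$ explored in that order, and write $c_1$ for the first child of $\rot$ in your enumeration. Under your rule, $\zeta_{c_1}^i=\xi_{u_1}$ on $\{\psi(u_1)\geq h\}$, $\zeta_{c_1}^i=\xi_{u_2}$ on $\{\psi(u_1)<h,\,\psi(u_2)\geq h\}$, and $\zeta_{c_1}^i=Z_{c_1}^i$ otherwise. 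On the first event $\xi_{u_1}$ is conditioned to exceed a threshold, so $\zeta_{c_1}^i$ is \emph{not} standard normal given $\psi(y_i)$ (for instance, with a common threshold $s$ one computes $\mathbb{P}[\zeta_{c_1}^i<s\mid\psi(y_i)]=\Phi(s)^3\neq\Phi(s)$). The filtration argument you invoke therefore cannot be completed with this construction: the assignment ``which $\xi$ feeds vertex $w$'' is not predictable.

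The fix—and this is what the paper does—is to take $B^i$ to be the set of \emph{all} good vertices explored during the $y_i$-phase (including those with $\psi<h$), define $\tau^i$ on this larger set, and reuse $\xi_z$ for every $z\in B^i\setminus\{y_i\}$. Then the tree vertex $\tau^i(z)$ is determined by the algorithm history \emph{before} $\xi_z$ is revealed (it depends only on $\overline z$ and the exploration order, both fixed prior to line~16), so the assignment is predictable and \eqref{94} applied step by step shows that $(\zeta_w^i)_w$ are i.i.d.\ standard normal conditionally on $\psi(y_i)$. This larger $B^i$ still satisfies $\mathsf{T}_\textup{end}^{y_i}\subseteq B^i\subseteq B_\finitegraph(\mathsf{T}_\textup{end}^{y_i},1)$, since every good $z$ was queued by its unique neighbour $\overline z\in\mathsf{T}_\textup{end}^{y_i}$. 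Your error recursion then applies verbatim on this enlarged $B^i$.
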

\begin{proof}
  Let $Y_x^i$ for $x\in \treegraph\setminus\{\rot\}$ and
  $1\leq i \leq k_\textup{max}$ be a sequence of i.i.d.~random variables
  of distribution $\mathcal N(0,  \frac{d}{d-1})$ defined on the
  auxiliary probability space $(\Omega, \mathcal A, \mathbb P)$. Let
  $i\in\{1,\ldots,k_\textup{end} \}$. As explained below \eqref{851}, the
  subtree $\mathsf{T}_\textup{end}^{y_i}$ of $\finitegraph$  is
  constructed between line 3 (when $y_i$ is taken out of  $\mathsf{SQ}$)
  and line 26 of the algorithm (after which the next bad vertex $y_{i+1}$
    is taken out of $\mathsf{SQ}$ or the algorithm terminates because
    $i=k_{\textup{end}}$). The injection $\tau^i$ and the random field
  $\phi^i$  will be defined according to the  behaviour of the algorithm
  during this time.

  On line 4 of the algorithm we generate $\psi(y_i)$. If $\psi(y_i)<h$,
  then the algorithm continues back on line 2 and
  $\mathsf{T}_\textup{end}^{y_i}=\emptyset$. In this case define
  recursively $\phi^i(\rot) \coloneqq \psi(y_i)$ and
  $\phi^i(z) \coloneqq \frac{1}{d-1} \phi^i(\overline z) + Y_x^i$ for
  $z\in \treegraph \setminus \{\rot\}$. Then \eqref{801} holds for
  $\phi^i$ by \eqref{2544}--\eqref{e:defPa}. Moreover \eqref{800} is
  trivially satisfied since $\mathsf{T}_\textup{end}^{y_i}=\emptyset$
  (set $B^i \coloneqq \emptyset$). Otherwise we have $\psi(y_i)\geq h$
  and $y_i$ is added to $\mathsf{T}^{y_i}$. If the algorithm terminates
  on line 8, then $\mathsf{T}_\textup{end}^{y_i} = \{y_i\}$. In this case
  set $B^i \coloneqq \{y_i\}$ and
  $\tau^i(y_i) \coloneqq \rot \in \treegraph$ and again recursively
  define  $\phi^i(\rot) \coloneqq \psi(y_i)$ and
  $\phi^i(z) \coloneqq \frac{1}{d-1} \phi^i(\overline z) + Y_x^i$ for
  $z\in \treegraph \setminus \{\rot\}$.  Then \eqref{801} holds for
  $\phi^i$ by \eqref{2544}--\eqref{e:defPa} and also \eqref{800} is
  satisfied since $\big| \psi(y_i) - \phi^i(\tau^i(y_i)) \big|=0$. If the
  algorithm does not terminate on line 8, then on line 10 we now add all
  neither explored nor already queuing neighbours of $y_i$ to
  $\mathsf{PQ}$ (which before that was empty). Consider the
  \textbf{while}-loop on line 11. During this \textbf{while}-loop, if $z$
  is taken out of $\mathsf{PQ}$ and $z \notin G_\mathsf{E}$, then it is
  transferred to $\mathsf{SQ}$ and it will not be part of
  $\mathsf{T}_\textup{end}^{y_i}$. Let $z_1,\ldots,z_m$ be the successive
  vertices taken out of  $\mathsf{PQ}$  during the \textbf{while}-loop
  which are in $G_\mathsf{E}$ at the moment they are checked (on line
    13). Possibly there are no such vertices, so we might have
  $\{z_1,\ldots,z_m\} = \emptyset$ and
  $\mathsf{T}_\textup{end}^{y_i}=\{y_i\}$. In any case,
  $\mathsf{T}_\textup{end}^{y_i} = \{y_i\} \cup \{z_1,\ldots,z_m \, | \,
    \psi(z_i)\geq h\} \subseteq \{y_i,z_1,\ldots,z_m\}
  \subseteq B_\finitegraph( \mathsf{T}_\textup{end}^{y_i},1)$.
  The injection $\tau^i$ we are going to construct now, will map
  $B^i\coloneqq \{y_i,z_1,\ldots,z_m\}$ to $\treegraph$. By definition
  one has $\overline{z}_1=y_i$ whereas for $j=2,\ldots,m$ one has
  $\overline{z}_j =z$ for some $z\in \{y_i,z_1,\ldots,z_{j-1}\}$. More
  precisely, $\overline{z}_j$ is the unique neighbour of $z_j$ in
  $\mathsf{E}$ at the moment $z_j$ was added to $\mathsf{PQ}$ (which
    happened on line 10 or line 22). There cannot be more than one since
  at a later point $z_j \in G_\mathsf{E}$ and the set of explored
  vertices only grows. Since there are at most $d-1$ not explored
  neighbours that can be added on line 10 or 22 (except if $i=1$ when
    $y_1=x$ and on line 10 there are added exactly $d$ neighbours), this
  shows that for $z \in B^i$ there are at most $d-1$ elements
  $w\in \{z_1,\ldots,z_m\}$ such that $\overline{w}=z$ (exactly $d$
    elements if $i=1$ and $z=y_1$). Therefore, we can define
  $\tau^i: B^i \to \treegraph$ inductively by
  $\tau^i(y_i)\coloneqq \rot$ and such that $\tau^i$ restricted to
  $\{ w\in \{z_1,\ldots,z_m\} \, | \, \overline{w}=z\}$ is an injective
  map to $S_\treegraph(\rot,1)$ for $z=y_i$ and an injective map to
  $S_\treegraph(\tau^i(z),1)\setminus \{\tau^i(\overline{z})\}$ for
  $z\in \{z_1,\ldots,z_m\}$. Note that $\tau^i(B^i)$ is a connected
  subset of $\treegraph$ containing the root $\rot\in \treegraph$.
  By construction also $\tau^i(\mathsf{T}_\textup{end}^{y_i})$ is a
  connected subset of $\treegraph$ containing $\rot\in \treegraph$
  because $y_i \in \mathsf{T}_\textup{end}^{y_i}$ with
  $\tau^i(y_i)= \rot$ and
  $B^i \subseteq B_\finitegraph( \mathsf{T}_\textup{end}^{y_i},1)$. We
  now define $\phi^i$ on $\tau^i(B^i) \subseteq \treegraph$ and check the
  remaining properties in \eqref{800} and \eqref{801} for this case.

  Set $\phi^i(\rot) =\phi^i(\tau^i(y_i))  \coloneqq \psi(y_i)$ and
  for $j=1,\ldots,m$ define inductively
  $\phi^i(\tau^i(z_j)) \coloneqq \frac{1}{d-1} \phi^i(\tau^i(\overline{z}_j))
  + \xi_{z_j} \cdot (\frac{d}{d-1})^\frac12$.
  Recall that here $\xi_{z_j}$ for $j=1,\ldots,m$ are the i.i.d.~standard
  Gaussian random variables used to define $\psi(z_1),\ldots,\psi(z_m)$
  at the respective moments on line 16 of the algorithm. Note that
  conditionally on $\psi(y_i)$, the field $(\phi^i(\tau^i(z)))_{z\in B^i}$
  has the same distribution as
  $(\varphi_\treegraph(\tau^i(z)))_{z\in B^i}$ under
  $\mathbb P^\treegraph_{\psi(y_i)}$. This follows by
  \eqref{2544}--\eqref{e:defPa} since $\tau^i$ is defined in such a way
  that $\tau^i(\overline z)$ (in the notation of \eqref{485}) for
  $z\in \{z_1,\ldots,z_m\}$ is equal to $\overline{\tau^i(z)}$ (in the
    notation above \eqref{830}). We extend $\phi^i$ to all
  $w \in U \coloneqq \treegraph \setminus B^i$ by recursively defining
  $\phi^i(w)  \coloneqq \frac{1}{d-1} \phi^i(\overline w) + Y_w^i$. Then
  \eqref{801} holds for $\phi^i$ by \eqref{2544}--\eqref{e:defPa}. We
  proceed to show the remaining claim of \eqref{800}. Note that
  $\big| \psi(y_i) - \phi^i(\tau^i(y_i)) \big|
  = \big| \psi(y_i) - \phi^i(\rot) \big|
  = \big| \psi(y_i) - \psi(y_i) \big|=0$
  by definition. For  $j=1,\ldots,m$ one has
  \begin{equation*}
    \begin{split}
      &\big| \psi(z_{j}) - \phi^i(\tau^i(z_{j})) \big|  = \big|
      a(z_{j},\psi,\mathsf{E}) + \xi_{z_{j}} \cdot b(z_{j},\mathsf{E})^{\frac12}  -
      \tfrac{1}{d-1} \phi^i(\tau^i(\overline{z}_{j})) - \xi_{z_{j}}\cdot
      (\tfrac{d}{d-1})^\frac12 \big| \\
      &\quad \leq  \big| a(z_{j},\psi,\mathsf{E}) -
      \tfrac{1}{d-1}\psi(\overline{z}_{j}) \big| +  |\xi_{z_{j}}| \cdot \big|
      b(z_{j},\mathsf{E})^\frac12 - (\tfrac{d}{d-1})^\frac12 \big|  + \tfrac{1}{d-1}
      \big| \psi(\overline{z}_{j}) - \phi^i(\tau^i(\overline{z}_{j})) \big| .
    \end{split}
  \end{equation*}
  To the first two differences on the right hand side we can apply
  \eqref{518} and \eqref{547} (on the event
    $\{\textstyle \sup_{z\in \mathsf{E}_\textup{end}} |\psi(z)| < M_n\}$)
  since at any moment of the algorithm
  $|\mathsf{E}|\leq |\mathsf{E}_\textup{end}| \leq c_{h,\kappa} \ln(N_n)$
  by \eqref{851}. We also use the  inequality
  $|\sqrt{s} - \sqrt{t}| = \frac{|s-t|}{\sqrt{s}+\sqrt{t}} \leq \frac{1}{\sqrt{t}} |s-t|$.
  So  by Proposition~\ref{535} (for $b \coloneqq c_{h,\kappa}$ and
    $b' \coloneqq c_\kappa$) we find $c_{h,\kappa}'>0$ such that for all
  $j=1,\ldots,m$
  \begin{equation}
    \label{805}
    \begin{split}
      &\big| \psi(z_{j}) - \phi^i(\tau^i(z_{j})) \big|  \leq (1+
        |\xi_{z_{j}}|)c_{h,\kappa}'(\ln(N_n))^{-2}  + \tfrac{1}{d-1}  \big|
      \psi(\overline{z}_{j}) - \phi^i(\tau^i(\overline{z}_{j})) \big|.
    \end{split}
  \end{equation}
  Now note that on the event
  $\{\sup_{z\in \mathsf{E}_\textup{end}} |\psi(z)| < M_n\}$ one has,
  again by using Proposition~\ref{535} for the same
  $b \coloneqq c_{h,\kappa}$ and $b' \coloneqq c_\kappa$,   that for
  $j=1,\ldots,m$
  \begin{equation*}
    \begin{split}
      M_n
      &\overset{\phantom{\eqref{547}}}{>} |\psi(z_{j}) |
      = \big| a(z_{j},\psi,\mathsf{E}) + \xi_{z_{j}} \cdot
      b(z_{j},\mathsf{E})^{\frac12} \big|
      \geq |\xi_{z_{j}}| \cdot |b(z_{j},\mathsf{E})^{\frac12}| - |
      a(z_{j},\psi,\mathsf{E})|  \\
      &\stackrel[\eqref{547}]{\eqref{518}}{\geq} |\xi_{z_{j}}| \cdot \big(
        \tfrac{d}{d-1}-c_{h,\kappa}'(\ln(N_n))^{-3} \big)^{\frac12} - \big(
        \tfrac1{d-1}\psi(\overline z_j)+ c_{h,\kappa}'(\ln(N_n))^{-2} \big)   \geq
      |\xi_{z_{j}}| - M_n,
    \end{split}
  \end{equation*}
  where the last inequality holds if $n$ is large enough. Combine this
  with \eqref{805} to obtain that for $n$ large enough, $j=1,\ldots,m$
  and on the event $\{\sup_{z\in \mathsf{E}_\textup{end}} |\psi(z)| < M_n\}$
  \begin{equation}
    \label{560}
    \big| \psi(z_{j}) - \phi^i(\tau^i(z_{j})) \big|  \leq (1+ 2M_n
      )c_{h,\kappa}'(\ln(N_n))^{-2}  + \tfrac{1}{d-1}  \big| \psi(\overline{z}_{j}) -
    \phi^i(\tau^i(\overline{z}_{j})) \big|.
  \end{equation}
  This is the main ingredient to show the remainder of \eqref{800}. By
  induction we will now  show that for $n$ large enough and on the  event
  $\{\sup_{z\in \mathsf{E}_\textup{end}} |\psi(z)| < M_n\}$ one has
  \begin{equation}
    \label{0002}
    \big|
    \psi(z_{j}) - \phi^i(\tau^i(z_{j})) \big| \leq j (1+2M_n) c_{h,\kappa}'
    (\ln(N_n))^{-2} \quad \text{for } j=1,\ldots,m.
  \end{equation}
  For $j=1$ one has $\overline{z}_1=y_i$ and the last summand on the
  right hand side of \eqref{560} vanishes by definition of
  $\phi^i(\tau^i(y_i))$. Assume the statement holds for $1\leq j<m$. Then
  either $\overline{z}_{j+1}=y_i$ and therefore the last summand on the
  right hand side of \eqref{560} vanishes again, or
  $\overline{z}_{j+1}=z_k$ for some $k\in \{1,\ldots,j\}$ and the
  induction hypothesis implies
  $ \big| \psi(\overline{z}_{j+1}) - \phi^i(\tau^i(\overline{z}_{j+1})) \big|
  \leq k(1+2M_n)c_{h,\kappa}'(\ln(N_n))^{-2}
  \leq j(1+2M_n)c_{h,\kappa}'(\ln(N_n))^{-2}$.
  In any case by \eqref{560},
  $\big| \psi(z_{j+1}) - \phi^i(\tau^i(z_{j+1})) \big|
  \leq (1+2M_n)c_{h,\kappa}'(\ln(N_n))^{-2} + \frac{1}{d-1} j(1+2M_n)c_{h,\kappa}'(\ln(N_n))^{-2}
  \leq (j+1)(1+2M_n)c_{h,\kappa}'(\ln(N_n))^{-2}$.
  This concludes the induction.

  Let $\varepsilon>0$. Since
  $\{z_1,\ldots,z_m\} \subseteq \mathsf{E}_\textup{end}$, one has
  $m \leq |\mathsf{E}_\textup{end}| \leq c_{h,\kappa} \ln(N_n)$ by
  \eqref{851}. Moreover, $M_n=c_\kappa\sqrt{\ln(N_n)}$. So by
  \eqref{0002} we obtain that for $n$ large enough and on the event
  $\{\sup_{z\in \mathsf{E}_\textup{end}} |\psi(z)| < M_n\}$ one has
  $\big| \psi(z_j) - \phi^i(\tau^i(z_{j})) \big|
  \leq c_{h,\kappa}\ln(N_n)(1+2c_\kappa\sqrt{\ln(N_n)})
  c_{h,\kappa}' (\ln(N_n))^{-2} \leq \varepsilon$
  for $j=1,\ldots,m$. We deduce \eqref{800}.

  It remains to show \eqref{801} for
  $i=k_\textup{end}+1,\ldots,k_\textup{max}$ and \eqref{802}. For
  $i=k_\textup{end}+1,\ldots,k_\textup{max}$ define recursively
  $\phi^i(\rot) \coloneqq M_n$ and
  $\phi^i(z) \coloneqq \frac{1}{d-1} \phi^i(\overline z) + Y_x^i$ for
  $z\in \treegraph \setminus \{\rot\}$, so that \eqref{801} holds for
  $\phi^i$ by \eqref{2544}--\eqref{e:defPa}. Finally, note that for each
  $i=1,\ldots, k_\textup{max}$ the field $\phi^i$ is constructed using
  $(Y_x^i)_{x\in \treegraph \setminus \{\rot\}}$  and possibly the
  i.i.d.~random variables $(\xi_z)_{z\in B^i}$ and  $\Psi(y_i)$. Since
  for $j=1,\ldots, k_\textup{max}$ with $j\neq i$ one has that
  $(Y_x^i)_{x\in \treegraph \setminus \{\rot\}}$ is independent of
  $(Y_x^j)_{x\in \treegraph \setminus \{\rot\}}$  and
  $B^i \cap B^j = \emptyset$, this shows \eqref{802} conditionally on
  $\Psi(y_1),\ldots,\Psi(y_{k_\textup{end}})$ (all random variables are
    Gaussian). The proof is complete.
\end{proof}

\begin{corollary}
  \label{841}
  Let $h\in \mathbb R$ and $\varepsilon>0$. Consider  Algorithm
  \ref{algorithm} and recall $k_\textup{max}$ from Lemma~\ref{532}. Then
  on the same  auxiliary space $(\Omega,\mathcal A,\mathbb P)$ as $\psi$
  one can define random variables $Z^1 ,\ldots,Z^{k_\textup{max}}$ such
  that,  conditionally on $\psi(y_1),\ldots,\psi(y_{k_\textup{end}})$,
  the following properties hold (see \eqref{e:defPa} and below
    \eqref{0.5} for notation):
  \begin{flalign}
    \quad \ \, \bullet& \ \, \text{for all $n$ large enough and  all
      $i=1,\ldots,k_\textup{end}$ one has $Z^i \geq |\mathsf{T}_\textup{end}^{y_i}|$
    }    & \nonumber  \\
    \quad \ \, & \ \, \text{on the event $\{\textstyle \sup_{z\in
          \mathsf{E}_\textup{end}} |\psi(z)| < M_n\}$} &
    \label{563}  \\
    \quad \ \,  \bullet& \ \, \text{$Z^i$ is distributed as $|\mathcal
      C_{\rot}^{\treegraph,h-\varepsilon}|$ under $\mathbb
      P^\treegraph_{\psi(y_i)}$ for all $i=1,\ldots,k_\textup{end}$,} & \nonumber \\
    \quad \ \, & \ \, \text{$Z^i$ is distributed as $|\mathcal
      C_{\rot}^{\treegraph,h-\varepsilon}|$ under $\mathbb P^\treegraph_{M_n}$
      for all $i=k_\textup{end}+1,\ldots,k_\textup{max}$} &
    \label{565}  \\
    \quad \ \,  \bullet& \ \, \text{$Z^1,\ldots,Z^{k_\textup{max}}$ are
      independent.} &
    \label{564}
  \end{flalign}
\end{corollary}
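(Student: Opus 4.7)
The plan is to piggyback directly on the coupling supplied by Lemma~\ref{806}: apply it with the same $\varepsilon$ (and for $n$ large enough so that \eqref{800} holds) to obtain the fields $\phi^1,\dots,\phi^{k_\textup{max}}$ on $\treegraph$, and then define
\begin{equation*}
  Z^i \coloneqq \big| \mathcal{C}_\rot^{\phi^i,\, h-\varepsilon} \big|,
  \qquad i=1,\dots,k_\textup{max},
\end{equation*}
where $\mathcal{C}_\rot^{\phi^i,\, h-\varepsilon}$ denotes the connected component of the level set $\{z \in \treegraph \,|\, \phi^i(z) \geq h-\varepsilon\}$ containing the root $\rot$. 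Each $Z^i$ is then a deterministic measurable functional of the field $\phi^i$.

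With this definition, \eqref{565} is immediate from \eqref{801}: if $\phi^i$ has the law of $\varphi_\treegraph$ under $\mathbb P^\treegraph_{\psi(y_i)}$ (resp.\ under $\mathbb P^\treegraph_{M_n}$ for $i > k_\textup{end}$), then applying the functional $|\mathcal{C}_\rot^{\cdot,\,h-\varepsilon}|$ yields precisely $|\mathcal{C}_\rot^{\treegraph,\,h-\varepsilon}|$ under that law. Similarly, \eqref{564} follows at once from \eqref{802}, since independence is preserved by measurable functions applied coordinatewise.

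The only point requiring a short argument is \eqref{563}. Fix $i \in \{1,\dots,k_\textup{end}\}$ and work on the event $\{\sup_{z\in \mathsf{E}_\textup{end}} |\psi(z)| < M_n\}$. Every $z \in \mathsf{T}_\textup{end}^{y_i}$ was added to the subtree only when $\psi(z) \geq h$ was checked by the algorithm, and by \eqref{800} we have $\mathsf{T}_\textup{end}^{y_i} \subseteq B^i$, so the approximation bound $|\psi(z) - \phi^i(\tau^i(z))| \leq \varepsilon$ applies and gives
\begin{equation*}
  \phi^i(\tau^i(z)) \;\geq\; \psi(z) - \varepsilon \;\geq\; h-\varepsilon
  \quad \text{for every } z \in \mathsf{T}_\textup{end}^{y_i}.
\end{equation*}
Hence $\tau^i(\mathsf{T}_\textup{end}^{y_i})$ lies in the level set of $\phi^i$ above $h-\varepsilon$; since \eqref{800} also guarantees that $\tau^i(\mathsf{T}_\textup{end}^{y_i})$ is a connected subset of $\treegraph$ containing $\rot$, it is contained in $\mathcal{C}_\rot^{\phi^i,\,h-\varepsilon}$. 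Injectivity of $\tau^i$ then yields $|\mathsf{T}_\textup{end}^{y_i}| = |\tau^i(\mathsf{T}_\textup{end}^{y_i})| \leq Z^i$, which is \eqref{563}.

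Because all the analytic work (the vertex-by-vertex approximation, the injective embedding into $\treegraph$, and the distributional identification of $\phi^i$) is already carried out in Lemma~\ref{806}, there is no real obstacle here: the corollary is merely a repackaging, converting the $\varepsilon$-close coupling of the fields into a stochastic domination of subtree sizes by independent copies of $|\mathcal{C}_\rot^{\treegraph,\,h-\varepsilon}|$. The only care needed is to verify that every vertex of $\mathsf{T}_\textup{end}^{y_i}$ lies in $B^i$ so that the $\varepsilon$-bound of \eqref{800} actually applies, which follows from the inclusion $\mathsf{T}_\textup{end}^{y_i} \subseteq B^i$ asserted in \eqref{800}.
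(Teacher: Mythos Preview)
Your proof is correct and follows essentially the same approach as the paper: define $Z^i$ as the size of the root cluster of $\{\phi^i \geq h-\varepsilon\}$, read off \eqref{565} and \eqref{564} from \eqref{801} and \eqref{802}, and for \eqref{563} use the $\varepsilon$-approximation together with connectivity of $\tau^i(\mathsf{T}_\textup{end}^{y_i})$ and injectivity of $\tau^i$. The argument and its justification match the paper's proof step for step.
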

\begin{proof}
  We consider Lemma \ref{806} and define $Z^i$ as the size of the
  connected component of
  $\{ w\in \treegraph \, | \, \phi^i(w) \geq h-\varepsilon \}$ containing
  the root $\rot\in \treegraph$. Then \eqref{565} and \eqref{564}
  follow from \eqref{801} and \eqref{802}. We turn to \eqref{563}. Note
  that for $i=1,\ldots,k_\textup{end}$ and
  $z\in \mathsf{T}_\textup{end}^{y_i}$ one has
  $\big| \psi(z) - \phi^i(\tau^i(z)) \big| \leq \varepsilon$  by
  \eqref{800} under the assumptions of \eqref{563}. This shows that
  $\phi^i(\tau^i(z)) \geq h-\varepsilon$ since $\psi(z) \geq h$ due to
  $z\in \mathsf{T}_\textup{end}^{y_i}$. Hence
  $\tau^i(\mathsf{T}_\textup{end}^{y_i})
  \subseteq \{ w\in \treegraph \, | \, \phi^i(w) \geq h-\varepsilon \}$.
  As $\tau^i(\mathsf{T}_\textup{end}^{y_i})$ is also a connected subset
  of $\treegraph$ containing $\rot\in \treegraph$, we conclude
  $| \tau^i(\mathsf{T}_\textup{end}^{y_i}) | \leq Z^i$ by definition of
  $Z^i$. The proof of \eqref{563} follows since $\tau^i$ is an injection
  and hence
  $| \tau^i(\mathsf{T}_\textup{end}^{y_i}) | = | \mathsf{T}_\textup{end}^{y_i} |$.
\end{proof}

We are now ready to show Proposition \ref{562}, which as explained above
its statement implies Theorem \ref{microscopiccomponents} and thereby
concludes Section \ref{subcriticalsection}.

\begin{proof}[Proof of Proposition \ref{562}]
  Let $h>h_\star$ and $\kappa>0$. Choose $\varepsilon>0$ small enough
  such that $h-\varepsilon>h_\star$. Moreover, let
  $\delta_{h-\varepsilon}>0$  be such that $g_{h-\varepsilon}$ defined in
  \eqref{2513} has the properties explained therein. Let
  $K=K_{h,\kappa}>0$ to be fixed later (below \eqref{809}). By
  conditioning on $\sigma(\psi(y_1),\ldots,\psi(y_{k_\textup{end}}))$ and
  then applying \eqref{563}, one has for $n$ large enough
  \begin{align}
    & \mathbb P\Big[ \sum_{i=1}^{k_\textup{end}} | \mathsf{T}_\textup{end}^{y_i} |
      \geq K\ln(N_n) \, ,  \sup_{z\in \mathsf{E}_\textup{end}} |\psi(z)| < M_n \Big]
    \nonumber                                                                       \\
    & \leq \mathbb E\bigg[ \mathbb P\Big[ \sum_{i=1}^{k_\textup{max}} Z^i \geq
        K\ln(N_n) \, , \sup_{z\in \mathsf{E}_\textup{end}} |\psi(z)| < M_n \, \Big| \,
        \sigma(\psi(y_1),\ldots,\psi(y_{k_\textup{end}})) \Big] \bigg]
    \label{571}
    \\
    & \leq \mathbb E\bigg[ \bbone_{\{|\psi(y_i)| < M_n \text{ for all
            $i=1,\ldots,k_\textup{end}$} \} } \mathbb P\Big[ \sum_{i=1}^{k_\textup{max}}
        Z^i \geq K\ln(N_n) \, \Big| \,
        \sigma(\psi(y_1),\ldots,\psi(y_{k_\textup{end}})) \Big] \bigg]. \nonumber
  \end{align}
  Since
  $\{\sum_{i=1}^{k_\textup{max}} Z^i \geq K\ln(N_n) \} =
  \{\prod_{i=1}^{k_\textup{max}} (1+\delta_{h-\varepsilon})^{Z^i}
    \geq (1+\delta_{h-\varepsilon})^{K\ln(N_n)} \}$,
  the conditional Markov inequality leads to $\mathbb P$-almost surely
  \begin{equation}
    \label{570}
    \begin{split}
      &\mathbb P\Big[ \sum_{i=1}^{k_\textup{max}} Z^i \geq K\ln(N_n) \, \Big| \,
        \sigma(\psi(y_1),\ldots,\psi(y_{k_\textup{end}})) \Big]   \\
      &\overset{\phantom{\eqref{564}}}{\leq} (1+\delta_{h-\varepsilon})^{-K\ln(N_n)} \mathbb E \Big[
        \prod_{i=1}^{k_\textup{max}} (1+\delta_{h-\varepsilon})^{Z^i}  \, \Big| \,
        \sigma(\psi(y_1),\ldots,\psi(y_{k_\textup{end}})) \Big] \\
      &\stackrel[\eqref{565}]{\eqref{564}}{=} (1+\delta_{h-\varepsilon})^{-K\ln(N_n)}
      \prod_{i=1}^{k_\textup{end}}  \mathbb E^\treegraph_{\psi(y_i)} \Big[
        (1+\delta_{h-\varepsilon})^{|\mathcal C_{\rot}^{\treegraph,h-\varepsilon}|} \Big]  \
      \cdot \!\!\!\!\! \prod_{i=k_\textup{end}+1}^{k_\textup{max}}
      \mathbb E^\treegraph_{M_n} \Big[  (1+\delta_{h-\varepsilon})^{| \mathcal
          C_{\rot}^{\treegraph,h-\varepsilon}|} \Big].
    \end{split}
  \end{equation}
  For $i=1,\ldots, k_\textup{end}$ one has
  $\mathbb E^\treegraph_{\psi(y_i)} \big[  (1+\delta_{h-\varepsilon})^{|\mathcal
      C_{\rot}^{\treegraph,h-\varepsilon}|} \big] \leq \mathbb
  E^\treegraph_{M_n} \big[  (1+\delta_{h-\varepsilon})^{|\mathcal
      C_{\rot}^{\treegraph,h-\varepsilon}|} \big]$  on the event $\{ \sup_{z\in
      \mathsf{E}_\textup{end}} |\psi(z)| < M_n \}$
  by \eqref{2544}--\eqref{e:defPa}.
  This combined with \eqref{571} and \eqref{570} shows  that for $n$ large enough
  \begin{equation}
    \label{808}
    \begin{split}
      &\mathbb P\Big[ \sum_{i=1}^{k_\textup{end}} | \mathsf{T}_\textup{end}^{y_i} |
        \geq K\ln(N_n) \, , \sup_{z\in \mathsf{E}_\textup{end}} |\psi(z)| < M_n \Big]
      \\
      &\qquad \qquad \qquad \qquad\qquad \leq   (1+\delta_{h-\varepsilon})^{-K\ln(N_n)} \mathbb
      E^\treegraph_{M_n} \Big[  (1+\delta_{h-\varepsilon})^{| \mathcal
          C_{\rot}^{\treegraph,h-\varepsilon}|} \Big]^{k_\textup{max}}.
    \end{split}
  \end{equation}
  Let us write
  $S_\treegraph(\rot,1) \eqqcolon \{x_1,\ldots,x_{d} \}$ so that
  $\treegraph = \{\rot \} \cup \bigcup_{i=1}^{d} U_{x_i}$ (see
    \eqref{830}). Note that for $n$ large enough one has $M_n \geq h$.
  Therefore \cite{AC1}, equation (1.11), implies that
  \begin{equation}
    \label{807}
    \begin{split}
      \mathbb E^\treegraph_{M_n} \Big[  (1+\delta_{h-\varepsilon})^{|\mathcal
          C_{\rot}^{\treegraph,h-\varepsilon}|} \Big]
      &=  (1+\delta_{h-\varepsilon}) \, \mathbb E^\treegraph_{M_n} \Big[  \prod_{i=1}^d
        (1+\delta_{h-\varepsilon})^{|\mathcal C_{\rot}^{\treegraph,h-\varepsilon} \cap U_{x_i} |
      }  \Big]    \\
      &=  (1+\delta_{h-\varepsilon}) \mathbb E^Y \bigg[ \mathbb E^\treegraph_{\frac{M_n}{d-1}+Y}
        \Big[ (1+\delta_{h-\varepsilon})^{|\mathcal C_{\rot}^{\treegraph,h-\varepsilon} \cap
            \treegraphplus| }  \Big]  \bigg]^d,
    \end{split}
  \end{equation}
  where $Y\sim \mathcal N(0,\tfrac{d}{d-1})$ and the expectation
  $\mathbb E^Y$ is taken with respect to $Y$. The inner expectation on
  the right hand side of \eqref{807} is equal to
  $g_{h-\varepsilon}(\tfrac{M_n}{d-1}+Y)$, see \eqref{2513}. Thus
  \eqref{807} shows that for $n$ large enough
  \begin{equation}
    \label{803}
    \mathbb E^\treegraph_{M_n} \Big[  (1+\delta_{h-\varepsilon})^{|\mathcal
        C_{\rot}^{\treegraph,h-\varepsilon}|} \Big]
    \leq \Big((1+\delta_{h-\varepsilon}) \, \mathbb E^Y \Big[
        g_{h-\varepsilon}(\tfrac{M_n}{d-1}+Y) \Big]^{d-1} \Big)^{\frac{d}{d-1}}.
  \end{equation}
  For $n$ large enough (so $M_n \geq h$) one has that
  $(1+\delta_{h-\varepsilon})\,
  \mathbb E^Y \big[ g_{h-\varepsilon}(\tfrac{M_n}{d-1}+Y) \big]^{d-1}
  = g_{h-\varepsilon}(M_n)$
  by \eqref{2513}. Hence \eqref{803} and \eqref{808} imply that for $n$
  large enough
  \begin{equation}
    \label{574}
    \mathbb P\Big[ \sum_{i=1}^{k_\textup{end}} | \mathsf{T}_\textup{end}^{y_i} |
      \geq K\ln(N_n) \, , \sup_{z\in \mathsf{E}_\textup{end}} |\psi(z)| < M_n \Big]
    \leq   (1+\delta_{h-\varepsilon})^{-K\ln(N_n)} \big(  g_{h-\varepsilon}(M_n)
      \big)^{\frac{d}{d-1} k_\textup{max}}.
  \end{equation}
  By  \eqref{2513} we know that there exist $c_{h}>0$   and $c_{h}'>0$
  such that for  $n$ large enough one has
  $g_{h-\varepsilon}(M_n) \leq c_{h} \exp(c_{h}'M_n^{3/2})
  = c_{h} \exp \big(c_{h}' (c_\kappa\sqrt{\ln(N_n)})^{3/2} \big)
  \leq c_{h} \exp(c_{h,\kappa}(\ln(N_n))^{3/4})$
  for some $c_{h,\kappa}>0$. Now recall that
  $k_\textup{max} = c_1 K s_n^2$. Therefore due to \eqref{471}, we can
  find  $c_{h}, c_{h,\kappa}>0$ for which
  $\big( g_{h-\varepsilon}(M_n) \big)^{\frac{d}{d-1} k_\textup{max}}
  \leq c_{h} \exp(c_{h,\kappa}K(\ln(N_n))^{7/8})$.
  So for some $c_h>0$ and $c_{h,\kappa}>0$ we obtain
  $(1+\delta_{h-\varepsilon})^{-K\ln(N_n)}
  \big(  g_{h-\varepsilon}(M_n) \big)^{\frac{d}{d-1} k_\textup{max}}
  \leq c_h\exp \big(-c_{h,\kappa} K \ln(N_n) \big)$
  for all $n$ large enough. Hence by \eqref{574}, for $n\geq1$,
  \begin{equation}
    \label{809}
    \mathbb P\Big[ \sum_{i=1}^{k_\textup{end}} | \mathsf{T}_\textup{end}^{y_i} |
      \geq K\ln(N_n) \, , \sup_{z\in \mathsf{E}_\textup{end}} |\psi(z)| < M_n \Big]
    \leq c_{h,\kappa} N_n^{-c_{h,\kappa}' K}.
  \end{equation}
  Take $K=K_{h,\kappa}>0$ large enough such that
  $c_{h,\kappa} N_n^{- c_{h,\kappa}' K_{h,\kappa}} \leq N_n^{- 1-\kappa}$.
  Then by \eqref{809} we can find $c_{h,\kappa}>0$ large enough such that
  \eqref{810} holds for all $n\geq 1$. This concludes the proof of
  Proposition \ref{562} and ultimately of Theorem
  \ref{microscopiccomponents}.
\end{proof}



\section{Mesoscopic components in the supercritical phase}
\label{supercriticalsection}

The last section of this article concerns the proof of \eqref{0.7} in the
form of Theorem \ref{manymesoscopiccomponents} below, that is, the
existence of a supercritical phase (complementary to the subcritical
  situation in Section \ref{subcriticalsection}) in which the connected
components of the levels sets of $\Psi_\finitegraph$ of at least
mesoscopic  size  contain a non-negligible fraction of the vertices of
$\finitegraph$. By mesoscopic size we mean that the number of vertices
contained is a fractional power of the total number of vertices of
$\finitegraph$. To be more specific, we recall the critical value
$h_\star$ (see \eqref{0.5}) and the notation
$\mathcal{C}_x^{\finitegraph,h}$ for the connected component of the level
set  of $\Psi_\finitegraph$ above level $h\in \mathbb R$ containing
$x\in \finitegraph$ (see beginning of Section \ref{subcriticalsection}).
Similarly, we denote  by $\mathcal{C}_x^{\treegraph,h}$ for
$x\in \treegraph$ and $h\in \mathbb R$ the connected component of the
level set of $\varphi_\treegraph$ above level $h$ containing $x$. We also
remind of the function  $\eta^+$ given in  \eqref{2512}. The main result
of this section is the following
\begin{theorem}
  \label{manymesoscopiccomponents}
  Let $h<h_\star$. Then there exist $c_h>0$ (see beginning of the proof
    of Lemma~\ref{expectationcomputation}) such that
  \begin{equation}
    \label{853}
    \lim_{n\to \infty} \mathbb P^\finitegraph \Big[ \sum_{x\in \finitegraph}
      \bbone_{ \big\{ |\mathcal{C}_x^{\finitegraph,h} | \geq N_n^{c_h} \big\} }
      \geq  \frac{\eta^+(h)}{2} \, N_n  \Big] =1.
  \end{equation}
\end{theorem}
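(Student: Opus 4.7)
The approach is the second moment method applied to a family of \emph{local} events. Fix $h<h_\star$, so $\lambda_h>1$ by \eqref{19}; pick $\delta_h>0$ small enough that $6\delta_h\leq\min\{\alpha,c_0/3\}$ and $12\delta_h<1$, and pick $0<c_h<\delta_h\log_{d-1}(\lambda_h)$. Set $R_n:=\lfloor\delta_h\log_{d-1}(N_n)\rfloor$ and, for $x\in\finitegraph$, let $\mathcal{C}_x^{\finitegraph,h,R_n}$ be the connected component of $x$ in $E_{\Psi_\finitegraph}^{\geq h}\cap B_\finitegraph(x,R_n)$. Define
\begin{equation*}
  A_x:=\Bigl\{\tx(B_\finitegraph(x,6R_n))=0,\ \bigl|\mathcal{C}_x^{\finitegraph,h,R_n}\cap S_\finitegraph(x,R_n)\bigr|\geq \lambda_h^{R_n}/R_n^2\Bigr\}.
\end{equation*}
Then $A_x$ is measurable with respect to $\sigma(\Psi_\finitegraph(y):y\in B_\finitegraph(x,R_n))$, and on $A_x$ one has $|\mathcal{C}_x^{\finitegraph,h}|\geq\lambda_h^{R_n}/R_n^2\geq N_n^{c_h}$ for all $n$ large. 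With $X_n:=\sum_{x\in\finitegraph}\bbone_{A_x}$, Chebyshev's inequality reduces \eqref{853} to the two bounds $\mathbb{E}^\finitegraph[X_n]\geq(1-o(1))\eta^+(h)N_n$ and $\Var_{\mathbb{P}^\finitegraph}(X_n)=o(N_n^2)$.

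For the first moment, let $\mathcal{V}_n:=\{x\in\finitegraph:\tx(B_\finitegraph(x,6R_n))=0\}$; assumption \eqref{1} (via the counting argument announced as Remark~\ref{842}) yields $|\mathcal{V}_n|\geq(1-o(1))N_n$. For each $x\in\mathcal{V}_n$ apply Theorem~\ref{extendedtheorem2.2} with $r:=R_n$, $R:=3R_n$ and a small fixed $\varepsilon>0$ with $h+\varepsilon<h_\star$: the coupling $\mathbb{Q}_n$ identifies $\Psi_\finitegraph$ on $B_\finitegraph(x,R_n)$ with $\varphi_\treegraph$ on $B_\treegraph(\rot,R_n)$ (via $\rho_{x,6R_n}$) up to uniform error $\varepsilon$, outside an exceptional event of probability at most $8d(d-1)^{R_n}\exp(-c\varepsilon^2(d-1)^{R_n})$, which is super-polynomially small in $N_n$ since $(d-1)^{R_n}=N_n^{\delta_h}$. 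On the coupling event, $\bigl|\mathcal{C}_\rot^{\treegraph,h+\varepsilon}\cap S_\treegraph^+(\rot,R_n)\bigr|\geq\lambda_h^{R_n}/R_n^2$ implies $A_x$, because every tree-vertex in the cluster at the stricter level $h+\varepsilon$ maps into the finite-cluster at level $h$. By \eqref{2141} at level $h+\varepsilon$ this tree probability tends to $\eta^+(h+\varepsilon)$, and the continuity of $\eta^+$ on $(-\infty,h_\star)$ from \eqref{2512} lets us send $\varepsilon\downarrow 0$, giving $\mathbb{P}^\finitegraph[A_x]\geq\eta^+(h)-o(1)$ uniformly in $x\in\mathcal{V}_n$.

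For the variance, the diagonal contribution is $O(\mathbb{E}^\finitegraph[X_n])=o(N_n^2)$. Split the off-diagonal sum $\sum_{x\neq y}\bigl(\mathbb{E}^\finitegraph[\bbone_{A_x}\bbone_{A_y}]-\mathbb{E}^\finitegraph[\bbone_{A_x}]\mathbb{E}^\finitegraph[\bbone_{A_y}]\bigr)$ according to whether $x,y\in\mathcal{V}_n$ and $B_\finitegraph(x,6R_n)\cap B_\finitegraph(y,6R_n)=\emptyset$. Non-qualifying pairs number at most $O(N_n(d-1)^{12R_n})=O(N_n^{1+12\delta_h})=o(N_n^2)$. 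For qualifying pairs, the two-ball version of Theorem~\ref{extendedtheorem2.2} couples $\Psi_\finitegraph$ jointly with $\varphi_\treegraph$ on the two disjoint $R_n$-balls, with the same super-polynomially small error; this reduces each covariance (up to the coupling error and a sprinkling shift of the threshold) to the covariance of two analogous level-set events supported on the disjoint subsets $B_\treegraph(\rot,R_n)$ and $B_\treegraph(z_{x,y},R_n)$ of $\treegraph$, separated by tree-distance at least $4R_n$. The decoupling inequality of \cite{13}, Corollary~1.3, applied with a sprinkling parameter $\varepsilon_n\downarrow 0$ chosen so that both the sprinkling gap in the marginal probabilities (controlled via the continuity in \eqref{2512}) and the resulting decoupling error are $o(1)$ uniformly in $x,y$, then bounds each such covariance by $o(1)$ and the whole off-diagonal sum by $o(N_n^2)$.

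The main obstacle is the joint calibration of the small parameters $\delta_h$, the coupling sprinkling $\varepsilon$ of Theorem~\ref{extendedtheorem2.2}, the decoupling sprinkling of \cite{13}, Corollary~1.3, and the exponent $c_h$: $\delta_h$ must be small enough that both the excluded-pair contribution and the decoupling error are summable against $N_n^2$, yet $h+\varepsilon$ must stay strictly below $h_\star$ so that \eqref{2141} delivers a positive limit $\eta^+(h+\varepsilon)$ that $\eta^+$-continuity sends to $\eta^+(h)$, and $\lambda_{h+\varepsilon}>1$ must survive to produce a strictly positive mesoscopic exponent $c_h$. The delicate step is the variance bound, where one uses the \emph{same} sprinkling sequence $\varepsilon_n\downarrow 0$ in both the coupling of $\Psi_\finitegraph$ with $\varphi_\treegraph$ and in the tree decoupling, so that the difference between the two shifted marginal probabilities on the tree (quantified by the continuity of $\eta^+$ and by a quantitative version of \eqref{2141}) is $o(1)$ and does not obstruct the required $o(N_n^2)$ bound.
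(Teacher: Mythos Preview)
Your overall architecture---second moment method on local events, one- and two-ball couplings via Theorem~\ref{extendedtheorem2.2}, and the tree decoupling inequality of \cite{13}---is exactly the paper's approach. There is, however, a genuine calibration error in the first-moment step that breaks the argument as written.

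You set the threshold in $A_x$ to be $\lambda_h^{R_n}/R_n^2$ and then, after sprinkling, want
\[
\mathbb P^\treegraph\bigl[\,\bigl|\mathcal C_{\rot}^{\treegraph,h+\varepsilon}\cap S_\treegraph^+(\rot,R_n)\bigr|\ge \lambda_h^{R_n}/R_n^2\,\bigr]\ \longrightarrow\ \eta^+(h+\varepsilon).
\]
But \eqref{2141} at level $h+\varepsilon$ only gives the analogous statement with $\lambda_{h+\varepsilon}^{R_n}/R_n^2$ in place of $\lambda_h^{R_n}/R_n^2$. Since $h\mapsto\lambda_h$ is strictly decreasing (see \eqref{19}), one has $\lambda_h>\lambda_{h+\varepsilon}$, so your threshold is \emph{exponentially larger} than the one \eqref{2141} controls; the inclusion of events goes the wrong way, and in fact the displayed probability should tend to $0$, not to $\eta^+(h+\varepsilon)$. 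Consequently the lower bound $\mathbb P^\finitegraph[A_x]\ge \eta^+(h)-o(1)$ does not follow.

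The fix, which is precisely what the paper does, is to build a cushion into the threshold: fix $\delta\in(0,h_\star-h)$, define $A_x$ with threshold $\lambda_{h+\delta}^{R_n}/R_n^2$ (equivalently, take $c_h<\delta_h\log_{d-1}(\lambda_{h+\delta})$), and restrict to sprinkling parameters $\varepsilon<\delta$. Then $\lambda_{h+\delta}^{R_n}/R_n^2\le \lambda_{h+\varepsilon}^{R_n}/R_n^2$ and \eqref{2141} at level $h+\varepsilon$ applies directly. The paper implements exactly this in \eqref{92} by choosing $c_h=\gamma_{h+\delta}$ and exploiting $\gamma_{h+\delta}<\gamma_{h+\varepsilon}$. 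With this correction in place, your variance argument (which matches \eqref{238}--\eqref{244}) goes through; note, though, that it is cleaner to keep the sprinkling $\varepsilon$ fixed throughout and only send $\varepsilon\to0$ at the very end (using $\limsup_n\mathbb P^\treegraph[A_{\rot}^{\treegraph,h-\varepsilon,c_h}]\le \eta^+(h-\varepsilon)$ and the continuity of $\eta^+$), rather than juggling an $n$-dependent $\varepsilon_n$ simultaneously in the coupling and the decoupling.
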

As explained in the introduction below \eqref{0.7}, it remains open
whether in the supercritical phase $h<h_\star$, as the size of the graphs
tends to infinity, one actually observes the emergence of a (unique)
giant connected component of the level set above level $h$ (see also
  Remark \ref{openquestions}).

We now give the idea of the proof of Theorem
\ref{manymesoscopiccomponents}. Roughly, the strategy is to control the
expectation and variance of the sum in \eqref{853} and then to deduce
Theorem~\ref{manymesoscopiccomponents} via a second moment inequality.
Now recall that by \eqref{1} all vertices of $\finitegraph$ have an
almost tree-like neighbourhood. One can also show that only a negligible
fraction does not have  an exactly tree-like neighbourhood of smaller
size (Remark \ref{842}). So essentially we can consider only vertices
with a tree-like neighbourhood in the sum in \eqref{853}. Moreover,
instead of counting the vertices $x\in \finitegraph$ with
$\big |\mathcal C_x^{\finitegraph,h} \big| \geq N_n^\gamma$ for some
fixed $\gamma>0$ (i.e.~contained in a mesoscopic connected component of
  $E_{\Psi_\finitegraph}^{\geq h}$), it will be easier to only consider
the  vertices $x\in \finitegraph$ for which the connected component
$\mathcal C_x^{\finitegraph,h}$ is already mesoscopic when intersected
with the tree-like neighbourhood of $x$ (see \eqref{232}). We show that
the expected number of such vertices grows linearly in the total number
of vertices of the graph $\finitegraph$ as $n$ tends to infinity (Lemma
  \ref{expectationcomputation}). A variance computation  then implies
that the number of vertices contained in mesoscopic components
concentrates around its expectation  as $n$ goes to infinity (Lemma
  \ref{variancecomputation}). The computations concerning the expectation
and  variance rely on the local approximation of $\Psi_\finitegraph$  by
$\varphi_\treegraph$ around vertices with tree-like neighbourhood that we
developed in Section \ref{subsectionapproximation}, which allows us to
reduce the computations about $\Psi_\finitegraph$  to computations about
$\varphi_\treegraph$ and apply  results from Section \ref{sectiontree} on
$\varphi_\treegraph$. With a second moment inequality Theorem
\ref{manymesoscopiccomponents} promptly follows. The section ends with
open questions in the supercritical regime $h < h_\star$ (Remark
  \ref{openquestions}).

It will be convenient to introduce some additional notation. For
$x\in \finitegraph$, $n\geq 1$ and $R\geq 0$ we set
$S_\finitegraph^+(x,R) \coloneqq  \pi_{n,x} \big(S_\treegraph^+(\rot,R)\big)$
(see below \eqref{830} for the notation). We also define
\begin{equation}
  \label{213}
  \begin{split}
    r_n   \coloneqq  \max\{1, \lfloor \tfrac{c_0}{18} \log_{d-1}(N_n)  \rfloor\}
    \qquad \text{and} \qquad R_n \coloneqq \max\{1, \lfloor \tfrac{c_0}{6}
      \log_{d-1}(N_n)  \rfloor \}.
  \end{split}
\end{equation}
For $n\geq 1$, $h \in \mathbb R$ and $\gamma >0$ we  define the events
\begin{equation}
  \label{232}
  \begin{split}
    A_{x}^{\finitegraph,h,\gamma} &\coloneqq \big\{ \big|\mathcal
      C_x^{\finitegraph,h} \cap S_\finitegraph^+(x,r_n) \big| \geq N_n^\gamma \big\}
    \quad \text{for $x\in \finitegraph$,} \\
    A_{x}^{\treegraph,h,\gamma} &\coloneqq \big\{ \big|\mathcal
      C_{x}^{\treegraph,h} \cap S_\treegraph^+(x,r_n) \big| \geq N_n^\gamma \big\}
    \quad \text{for $x\in \treegraph$}.
  \end{split}
\end{equation}
Note that the dependency on $n$ in the definition of
$A_{x}^{\treegraph,h,\gamma}$ in \eqref{232} does not appear in the
notation. Finally, we define (with $c_0$ as in  \eqref{32})
\begin{equation}
  \label{220}
  \gamma_h \coloneqq \frac{c_0}{20} \log_{d-1}(\lambda_h) \quad \text{for } h\in
  \mathbb R.
\end{equation}
By  \eqref{19} note that $\gamma_h$ is decreasing in $h$  and
$\gamma_h > 0$ for $h< h_\star$.

In the remainder of this section we will apply several times Theorem
\ref{extendedtheorem2.2} for $r= r_n$ and $R = R_n$ given in \eqref{213}.
Note that, for $n$ large enough,
$1 \leq r_n < R_n  \leq \frac{c_0}{6} \log_{d-1}(N_n)$ as required by
Theorem \ref{extendedtheorem2.2} and furthermore
$r_n \leq  \frac{c_0}{18} \log_{d-1}(N_n)$ and
$R_n-2r_n \geq \big(\frac{c_0}{6} \log_{d-1}(N_n) -1 \big)
- 2 \frac{c_0}{18} \log_{d-1}(N_n) =  \frac{c_0 \log_{d-1}(N_n)}{18}-1$.
Therefore Theorem \ref{extendedtheorem2.2} directly implies (with the
  notation from the beginning of Section \ref{subsectionapproximation})
\begin{lemma}[Corollary of Theorem \ref{extendedtheorem2.2}]
  \label{845}
  There exist $c,c'>0$ such that for   all $n\geq 1$ and $x,x'\in \finitegraph$
  with   $\tx(B_\finitegraph(x,2R_n))=0$,
  $\tx(B_\finitegraph(x',2R_n))=0$ and $B_\finitegraph(x,2R_n) \cap
  B_\finitegraph(x',2R_n) = \emptyset$, there is a coupling $\mathbb Q_n$ of
  $\Psi_\finitegraph$ and $\varphi_\treegraph$ satisfying for all $\varepsilon
  >0$
  \begin{equation}
    \label{215}
    \begin{split}
      &\mathbb Q_n \Big[ \sup_{y \in B_\finitegraph(x,r_n)\cup
          B_\finitegraph(x',r_n)} \big| \Psi_\finitegraph(y) -
        \varphi_\treegraph(\rho_{x,x',2R_n}(y))  \big| >  \varepsilon \Big] \leq c
      \exp \big(- c'\varepsilon^2 N_n^{\frac{c_0}{18}} \big).
    \end{split}
  \end{equation}
  In particular, there exist $c,c'>0$ such that for all $n\geq 1$,
  $x\in \finitegraph$ with   $\tx(B_\finitegraph(x,2R_n))=0$, there is a
  coupling $\mathbb Q_n$ of $\Psi_\finitegraph$ and $\varphi_\treegraph$
  such that for all $\varepsilon >0$ the same bound as in \eqref{215}
  applies to
  $\mathbb Q_n \big[ \sup_{y \in B_\finitegraph(x,r_n)} \big| \Psi_\finitegraph(y)
    - \varphi_\treegraph(\rho_{x,2R_n}(y))  \big|  > \varepsilon \big]$.
\end{lemma}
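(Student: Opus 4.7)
The plan is to obtain this lemma as a direct specialization of Theorem \ref{extendedtheorem2.2} with $r = r_n$ and $R = R_n$ as defined in \eqref{213}. The paragraph immediately preceding the lemma has already verified that these choices satisfy the quantitative hypotheses $1 \leq r_n < R_n \leq \frac{c_0}{6}\log_{d-1}(N_n)$ for $n$ large enough; the geometric hypotheses (vanishing tree excess on $B_{\finitegraph}(x, 2R_n)$ resp.\ on $B_{\finitegraph}(x, 2R_n) \cup B_{\finitegraph}(x', 2R_n)$, and disjointness) are exactly the hypotheses carried forward from Lemma \ref{845}, so no work is needed there.

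The first concrete step is to estimate the prefactor in \eqref{226}: by \eqref{213}, $r_n \leq \tfrac{c_0}{18}\log_{d-1}(N_n)$, hence $8d(d-1)^{r_n} \leq 8d \, N_n^{c_0/18}$. Next, for the exponent, the inequality $R_n - 2r_n \geq \tfrac{c_0}{18}\log_{d-1}(N_n) - 1$ (also recorded in the paragraph above the lemma) yields $(d-1)^{R_n - 2r_n} \geq (d-1)^{-1} N_n^{c_0/18}$. Plugging these into the bound \eqref{226} produces a right-hand side of the shape
\begin{equation*}
    C_1 \, N_n^{c_0/18} \exp\Big(-C_2 \, \varepsilon^2 N_n^{c_0/18}\Big),
\end{equation*}
with $C_1, C_2 > 0$ depending only on $d$ (hence absorbable into constants of the allowed type).

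The only remaining task is cosmetic: absorb the polynomial prefactor $N_n^{c_0/18}$ into the exponential. This is done in the standard way by reducing the constant in front of $\varepsilon^2 N_n^{c_0/18}$ slightly, using that $x e^{-x/2}$ is uniformly bounded on $[0,\infty)$, which converts $C_1 \, N_n^{c_0/18} \exp(-C_2 \varepsilon^2 N_n^{c_0/18})$ into a bound of the form $c \exp(-c' \varepsilon^2 N_n^{c_0/18})$ with new constants $c, c'$ depending only on $d$, $\alpha$, $\beta$. The second assertion of the lemma, involving only a single vertex $x$, follows in exactly the same way from the single-ball version of Theorem \ref{extendedtheorem2.2}, with the same choice of parameters. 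There is no real obstacle here; the content is a substitution of explicit values of $r$ and $R$ into a previously established inequality and a cleanup of the resulting expression.
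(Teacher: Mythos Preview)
Your approach is exactly the paper's: specialise Theorem~\ref{extendedtheorem2.2} to $r=r_n$, $R=R_n$ and simplify. The parameter checks and the estimates $(d-1)^{r_n}\le N_n^{c_0/18}$, $(d-1)^{R_n-2r_n}\ge (d-1)^{-1}N_n^{c_0/18}$ are correct and match the sentence preceding the lemma in the paper.

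There is, however, a small gap in your absorption step. Writing $M:=N_n^{c_0/18}$, the bound you extract from \eqref{226} has the form $C_1 M\exp(-C_2\varepsilon^2 M)$. Applying ``$t e^{-t/2}$ is bounded'' with $t=C_2\varepsilon^2 M$ yields only
\[
C_1 M\exp(-C_2\varepsilon^2 M)\;\le\;\frac{C}{\varepsilon^{2}}\exp\Big(-\tfrac{C_2}{2}\varepsilon^2 M\Big),
\]
so an $\varepsilon^{-2}$ prefactor survives; you do not obtain constants $c,c'$ depending only on $d,\alpha,\beta$. In fact no uniform absorption is possible: if one takes $\varepsilon$ (depending on $n$) with $\varepsilon^2 M\sim\log M$, then \eqref{226} together with the trivial bound gives a left-hand side of order one, while the target $c\exp(-c'\varepsilon^2 M)\asymp cM^{-c'}$ tends to zero. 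The paper's own one-line justification is subject to the same objection. This is harmless for the rest of the argument, since every use of the lemma (in Lemma~\ref{expectationcomputation} and in the derivation of \eqref{238}) is with $\varepsilon>0$ fixed and $n\to\infty$; a bound of the form $c\,N_n^{c_0/18}\exp(-c'\varepsilon^2 N_n^{c_0/18})$, which is what genuinely follows, suffices there. If you want a literally correct statement, either keep the polynomial prefactor or restrict to $n$ large depending on $\varepsilon$.
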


As the following remark explains, the assumptions on the vertices in the
statement of Lemma \ref{845} are typical.
\begin{remark}
  \label{842}
  Recall $R_n$ from \eqref{213}. For $n$ large enough the number of
  vertices $x \in \finitegraph$ that do not satisfy
  $\tx(B_\finitegraph(x,2R_n))=0$ is negligible when compared to the
  total number of vertices of $\finitegraph$. Indeed, for $n$ large
  enough one has $2 R_n \leq  \lfloor \alpha \log_{d-1}(N_n) \rfloor$ by
  \eqref{32} and  \eqref{wlog} and thus
  $\tx(B_\finitegraph(x, 2 R_n)) \leq 1$ for all $x \in \finitegraph$ by
  assumption \eqref{1}. Now by \cite{21}, Lemma~6.1, we have for $n$
  large enough
  \begin{equation}
    \label{225}
    \big| \{ x \in \finitegraph \, | \,  \tx(B_\finitegraph(x, 2R_n)) = 1
      \} \big| \leq (d-1)^{-(\lfloor \alpha \log_{d-1}(N_n) \rfloor -2R_n)}N_n
    \overset{(*)}{\leq} (d-1)N_n^{1-\frac{2\alpha}{3}},
  \end{equation}
  where in $(*)$ we use that
  $\lfloor \alpha \log_{d-1}(N_n) \rfloor -2R_n  \geq \alpha \log_{d-1}(N_n)-1  -
  \frac{c_0}{3} \log_{d-1}(N_n) \geq \frac{2\alpha }{3}\log_{d-1}(N_n)-1$
  (because $c_0 \leq \alpha$ by \eqref{32} and \eqref{wlog}).
  Moreover, for $n$ large enough, also the number of pairs of vertices
  $x,x' \in \finitegraph$ for which
  $B_\finitegraph(x,2R_n) \cap B_\finitegraph(x',2R_n)\neq \emptyset$ is
  negligible when compared to the total number $N_n^2$ of pairs of
  vertices of $\finitegraph$. Indeed, for $n$ large enough and for such
  $x,x'\in \finitegraph$ one has $x' \in B_\finitegraph(x,4R_n)$ and hence
  \begin{equation}
    \label{242}
    \begin{split}
      &\big| \{ x,x' \in \finitegraph \, | \,  B_\finitegraph(x,2R_n) \cap
        B_\finitegraph(x',2R_n)\neq \emptyset \} \big| \leq  \sum_{x\in \finitegraph}
      |B_\finitegraph(x,4R_n)| \\
      &\overset{\eqref{0}}{\leq} N_n |B_\treegraph(\rot,4R_n)| = N_n
      \frac{d(d-1)^{4R_n}-2}{d-2} \overset{\eqref{213}}{\leq} N_n \, d(d-1)^{\frac{2
          c_0}{3} \log_{d-1}(N_n) } \leq d N_n^{\frac{5}{3}} ,
    \end{split}
  \end{equation}
  where the last inequality follows because $c_0 \leq 1$ (see \eqref{32}).
  \qed
\end{remark}

We are now ready to proceed with the expectation and variance computation
announced after the statement of Theorem \ref{manymesoscopiccomponents}.

\begin{lemma}
  \label{expectationcomputation}
  Let $h< h_\star$. There exists $c_h>0$
  such that for all $0<\varepsilon < \frac{h_\star - h}2$ and $\zeta>0$ one has
  for $n$ large enough
  \begin{equation}
    \label{223}
    \mathbb P^\finitegraph \big[ A_{x}^{\finitegraph,h,c_h} \big] \geq
    \eta^+(h+\varepsilon)-\zeta \quad \text{for $x\in \finitegraph$ with
      $\tx(B_\finitegraph(x,2R_n))=0$}.
  \end{equation}
  As a consequence, one has
  \begin{equation}
    \label{224}
    \liminf_{n\to \infty} \frac{1}{N_n} \mathbb E^\finitegraph \bigg[ \sum_{x\in
        \finitegraph} \bbone_{A_{x}^{\finitegraph,h,c_h}}  \bigg] \geq  \eta^+(h)
    >0.
  \end{equation}
\end{lemma}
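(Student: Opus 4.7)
The plan is to use the local approximation of $\Psi_\finitegraph$ by $\varphi_\treegraph$ from Lemma~\ref{845} and then reduce the probability estimate to the tree estimate \eqref{2141} for the slightly higher level $h+\varepsilon$. Since the quantifiers require a single $c_h$ to work for every $\varepsilon \in (0,(h_\star-h)/2)$, I would set $c_h \coloneqq \gamma_{(h+h_\star)/2}$, with $\gamma_\cdot$ as in \eqref{220}. Note that $h_\star > (h+h_\star)/2 \geq h+\varepsilon$, so by \eqref{19} we have $\lambda_{h+\varepsilon} \geq \lambda_{(h+h_\star)/2} > 1$ for all admissible $\varepsilon$; consequently, recalling $r_n \sim \frac{c_0}{18}\log_{d-1}(N_n)$ from \eqref{213}, for $n$ large enough
\[
\frac{\lambda_{h+\varepsilon}^{r_n}}{r_n^2} \geq N_n^{\frac{c_0}{18}\log_{d-1}(\lambda_{(h+h_\star)/2})} \cdot r_n^{-2} \geq N_n^{c_h}.
\]

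To prove \eqref{223}, fix $x \in \finitegraph$ with $\tx(B_\finitegraph(x,2R_n))=0$, so that $\rho_{x,2R_n}$ is a graph isomorphism between $B_\finitegraph(x,2R_n)$ and $B_\treegraph(\rot,2R_n)$. Apply Lemma~\ref{845} with precision $\varepsilon$ to obtain a coupling $\mathbb Q_n$ of $\Psi_\finitegraph$ and $\varphi_\treegraph$ for which the event $\mathcal E_n \coloneqq \{\sup_{y\in B_\finitegraph(x,r_n)}|\Psi_\finitegraph(y) - \varphi_\treegraph(\rho_{x,2R_n}(y))|\leq \varepsilon\}$ has $\mathbb Q_n$-probability at least $1-c\exp(-c'\varepsilon^2 N_n^{c_0/18})$, which tends to $1$. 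The key geometric observation is that $\treegraph$ is a tree, so every vertex $z \in \mathcal C_\rot^{\treegraph,h+\varepsilon}\cap S_\treegraph^+(\rot,r_n)$ is connected to $\rot$ inside $E_{\varphi_\treegraph}^{\geq h+\varepsilon}$ via the unique geodesic, which lies entirely in $B_\treegraph^+(\rot,r_n)$. On $\mathcal E_n$, the isomorphism $\rho_{x,2R_n}^{-1}$ maps this geodesic to a non-backtracking path in $B_\finitegraph(x,r_n)$ joining $x$ to $\rho_{x,2R_n}^{-1}(z) \in S_\finitegraph^+(x,r_n)$ on which $\Psi_\finitegraph \geq h$; hence $\rho_{x,2R_n}^{-1}(z) \in \mathcal C_x^{\finitegraph,h}\cap S_\finitegraph^+(x,r_n)$. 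Thus on $\mathcal E_n$,
\[
\big|\mathcal C_\rot^{\treegraph,h+\varepsilon}\cap S_\treegraph^+(\rot,r_n)\big| \leq \big|\mathcal C_x^{\finitegraph,h}\cap S_\finitegraph^+(x,r_n)\big|.
\]
Combining this inclusion with the above lower bound on $\lambda_{h+\varepsilon}^{r_n}/r_n^2$ and applying \eqref{2141} with $k=r_n$ (valid since $h+\varepsilon < h_\star$) yields $\mathbb P^\finitegraph[A_x^{\finitegraph,h,c_h}] \geq \eta^+(h+\varepsilon) - \zeta$ for $n$ large, after absorbing the $\mathbb Q_n[\mathcal E_n^c]$ and the tree convergence error into $\zeta$.

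For the consequence \eqref{224}, I would split the sum over $x \in \finitegraph$ according to whether $\tx(B_\finitegraph(x,2R_n))=0$. By Remark~\ref{842}, the number of exceptional vertices is $O(N_n^{1-2\alpha/3}) = o(N_n)$, so
\[
\frac{1}{N_n}\mathbb E^\finitegraph\bigg[\sum_{x\in \finitegraph}\bbone_{A_x^{\finitegraph,h,c_h}}\bigg] \geq \Big(1 - o(1)\Big)\big(\eta^+(h+\varepsilon)-\zeta\big),
\]
and taking $\liminf_{n\to\infty}$, then $\zeta \downarrow 0$, then $\varepsilon \downarrow 0$ and invoking continuity of $\eta^+$ at $h$ from \eqref{2512} together with $\eta^+(h)>0$ (since $h<h_\star$) gives \eqref{224}.

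The main obstacle is the choice of $c_h$: it has to be strictly below $\tfrac{c_0}{18}\log_{d-1}(\lambda_{h+\varepsilon})$ for every $\varepsilon \in (0,(h_\star-h)/2)$, which is why the cutoff $(h+h_\star)/2$ for the eigenvalue in the definition of $c_h$ is essential. The rest is a careful but largely mechanical matter of chaining the coupling probability from Lemma~\ref{845} (exponentially small in $N_n^{c_0/18}$), the tree lower bound from \eqref{2141}, and the path-lifting argument enabled by the tree structure of $B_\finitegraph(x,2R_n)$.
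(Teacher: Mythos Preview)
Your proposal is correct and follows essentially the same route as the paper: the paper sets $\delta\coloneqq(h_\star-h)/2$ and $c_h\coloneqq\gamma_{h+\delta}$ (identical to your $\gamma_{(h+h_\star)/2}$), applies Lemma~\ref{845} to pass from $A_x^{\finitegraph,h,c_h}$ to $A_{\rot}^{\treegraph,h+\varepsilon,c_h}$ on the good coupling event, then uses $\gamma_{h+\delta}<\gamma_{h+\varepsilon}$ together with \eqref{2141} exactly as you do, and finally derives \eqref{224} from \eqref{225} and the continuity of $\eta^+$. Your path-lifting argument is just an explicit spelling-out of the event inclusion the paper uses in \eqref{219}.
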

\begin{proof}
  Let $h<h_\star$ and take $\delta\coloneqq\frac{h_\star - h}2>0$ so that
  $h+\delta<h_\star$. Let $\varepsilon<\delta$. Set also
  $c_h \coloneqq \gamma_{h+\delta}$. For $x\in \finitegraph$ as in the
  statement of \eqref{223} we can apply  Lemma \ref{845} and obtain that
  for $n\geq 1$  one has (recall that $\rho_{x,2R_n}$ is a graph
    isomorphism from $B_\finitegraph(x,2R_n)$ to
    $B_\treegraph(\rot,2R_n)$, see beginning of Section
    \ref{subsectionapproximation})
  \begin{equation}
    \label{219}
    \begin{split}
      \mathbb P^\finitegraph \big[ A_{x}^{\finitegraph,h,c_h} \big]
      &\overset{\phantom{\eqref{232}}}{\geq} \mathbb Q_n  \Big[
        A_{x}^{\finitegraph,h,\gamma_{h+\delta}} , \sup_{y \in B_\finitegraph(x,r_n)}
        \big| \Psi_\finitegraph(y) - \varphi_\treegraph(\rho_{x,2R_n}(y))  \big|  \leq
        \varepsilon \Big]   \\
      &\overset{\eqref{232}}{\geq}  \mathbb Q_n
      \Big[A_{\rot}^{\treegraph,h+\varepsilon,\gamma_{h+\delta}} , \sup_{y \in
          B_\finitegraph(x,r_n)} \big| \Psi_\finitegraph(y) -
        \varphi_\treegraph(\rho_{x,2R_n}(y))  \big|  \leq   \varepsilon \Big] \\
      & \overset{\phantom{\eqref{232}}}{\geq}   \mathbb P^\treegraph \big[
        A_{\rot}^{\treegraph,h+\varepsilon,\gamma_{h+\delta}} \big] - c \exp
      \big(-c' \varepsilon^2 N_n^{\frac{c_0}{18}} \big).
    \end{split}
  \end{equation}
  Note that since $0<\varepsilon<\delta$ one has
  $\gamma_{h+\delta}<\gamma_{h+\varepsilon}$ and hence
  \begin{equation}
    \label{92}
    \begin{split}
      \liminf_{n\to \infty}  \mathbb P^\treegraph & \big[
        A_{\rot}^{\treegraph,h+\varepsilon,\gamma_{h+\delta}} \big]
      \geq \liminf_{n\to \infty}   \mathbb P^\treegraph \big[
        A_{\rot}^{\treegraph,h+\varepsilon,\gamma_{h+\varepsilon}} \big] \\
      &\stackrel[\eqref{220}]{\eqref{232}}{=}   \liminf_{n\to \infty}   \mathbb
      P^\treegraph \Big[  \big|\mathcal C_{\rot}^{\treegraph,h+\varepsilon}
        \cap S_\treegraph^+(\rot,r_n) \big| \geq
        \lambda_{h+\varepsilon}^{\frac{c_0}{20}\log_{d-1}(N_n)} \Big]  \\
      &\overset{\  (*) \   }{\geq}   \liminf_{n\to \infty}  \mathbb P^\treegraph
      \Big[  \big|\mathcal C_{\rot}^{\treegraph,h+\varepsilon} \cap
        S_\treegraph^+(\rot,r_n) \big| \geq
        \frac{\lambda_{h+\varepsilon}^{r_n}}{r_n^2} \Big]
      \overset{\eqref{2141}}{=} \eta^+(h+\varepsilon),
    \end{split}
  \end{equation}
  where in $(*)$ we use that $\lambda_{h+\varepsilon}>1$ (see \eqref{19})
  and the definition of $r_n$ (see \eqref{213}). By combining \eqref{219}
  and \eqref{92} we find \eqref{223}. For \eqref{224} we only need to
  notice that,  for $n$ large enough,
  $\big| \{ x \in \finitegraph \, | \,  \tx(B_\finitegraph(x, 2R_n))
    = 0 \} \big| \geq  N_n-(d-1)N_n^{1-\frac{2\alpha}{3}}$
  by \eqref{225}. So \eqref{224} follows from \eqref{223} by summing only
  over  $x\in \finitegraph$ with $ \tx(B_\finitegraph(x, 2R_n)) = 0$ and
  applying~\eqref{2512}.
\end{proof}

As a next step we want to show that
$\sum_{x\in \finitegraph} \bbone_{ A_{x}^{\finitegraph,h,c_h}}$ for
$h<h_\star$ and the $c_h>0$ from Lemma \ref{expectationcomputation}
concentrates around its expectation. A variance computation will  be
enough. The main ingredient is contained in the next lemma.
\begin{lemma}
  Let $h< h_\star$. There exist $c,c'>0$ such that for all  $n\geq 1$,
  $x,x' \in \finitegraph$ with  $\tx(B_\finitegraph(x,2R_n))=0$,
  $\tx(B_\finitegraph(x',2R_n))=0$ and
  $B_\finitegraph(x,2R_n) \cap B_\finitegraph(x',2R_n) = \emptyset$ one
  has for all $\gamma>0$ and $\varepsilon>0$
  \begin{equation}
    \label{238}
    \mathbb P^\finitegraph \big[A_{x}^{\finitegraph,h,\gamma} \, , \,
      A_{x'}^{\finitegraph,h,\gamma} \big]  \,  \leq \,  \mathbb P^\treegraph \big[
      A_{\rot}^{\treegraph,h-\varepsilon,\gamma} \big]^2   + c \exp
    \big(-c'\varepsilon^2 N_n^{\frac{c_0}{18}} \big).
  \end{equation}
\end{lemma}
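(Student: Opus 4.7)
The plan is to combine the two-ball coupling from Lemma \ref{845} with the decoupling inequality of \cite{13}, Corollary 1.3, in order to transfer the joint probability onto $\varphi_\treegraph$ at two far-apart vertices of $\treegraph$ and then split into marginals. First I apply Lemma \ref{845} with $r = r_n$ and $R = R_n$ as given in \eqref{213}: since $r_n < R_n \leq \frac{c_0}{6}\log_{d-1}(N_n)$ for $n$ large, the hypotheses are satisfied and we obtain a coupling $\mathbb Q_n$ of $\Psi_\finitegraph$ and $\varphi_\treegraph$ for which the deviation event
\begin{equation*}
  D_\varepsilon \coloneqq \Big\{ \sup_{y \in B_\finitegraph(x, r_n) \cup B_\finitegraph(x', r_n)} \big| \Psi_\finitegraph(y) - \varphi_\treegraph(\rho_{x, x', 2R_n}(y)) \big| > \varepsilon \Big\}
\end{equation*}
has $\mathbb Q_n$-probability at most $c\exp(-c'\varepsilon^2 N_n^{c_0/18})$.

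Second, on the good coupling event $D_\varepsilon^{\mathsf c}$ I establish the set inclusion
\begin{equation*}
  A_{x}^{\finitegraph, h, \gamma} \cap A_{x'}^{\finitegraph, h, \gamma} \cap D_\varepsilon^{\mathsf c} \subseteq A_{\rot}^{\treegraph, h-\varepsilon, \gamma} \cap A_{z_{x, x'}}^{\treegraph, h-\varepsilon, \gamma}.
\end{equation*}
Concentrating on the factor at $x$, I use that $\rho_{x, x', 2R_n}$ restricts to a graph isomorphism from the cycle-free ball $B_\finitegraph(x, 2R_n)$ onto $B_\treegraph(\rot, 2R_n)$ that preserves tree geodesics. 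For $y \in \mathcal C_x^{\finitegraph, h} \cap S_\finitegraph^+(x, r_n)$ I argue that the unique tree geodesic from $x$ to $y$ lies in $\{\Psi_\finitegraph \geq h\}$: any $\{\Psi_\finitegraph \geq h\}$-path joining $x$ to $y$ is forced by the tree structure of $B_\finitegraph(x, 2R_n)$ either to contain this geodesic outright, or else to exit through $S_\finitegraph(x, 2R_n)$ and re-enter, and the buffer $R_n \gg r_n$ is what allows one to rule out the pathological bypass paths. Under the coupling the geodesic image is contained in $\{\varphi_\treegraph \geq h - \varepsilon\}$, so $\rho_{x, x', 2R_n}(y)$ belongs to $\mathcal C_\rot^{\treegraph, h-\varepsilon} \cap S_\treegraph^+(\rot, r_n)$, and the injectivity of $\rho_{x, x', 2R_n}$ transfers the sphere count. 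The argument at $x'$ is identical after exchanging the roles.

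Third, the disjointness hypothesis combined with distance-preservation of $\pi_{n, x}$ on tree-like balls yields $d_\treegraph(\rot, z_{x, x'}) \geq d_\finitegraph(x, x') > 4 R_n > 2 r_n$, so the balls $B_\treegraph(\rot, r_n)$ and $B_\treegraph(z_{x, x'}, r_n)$ are disjoint. Since $A_{\rot}^{\treegraph, h-\varepsilon, \gamma}$ and $A_{z_{x, x'}}^{\treegraph, h-\varepsilon, \gamma}$ are each $\sigma(\varphi_\treegraph|_{B_\treegraph(\cdot, r_n)})$-measurable on their respective ball, I apply \cite{13}, Corollary 1.3, to obtain
\begin{equation*}
  \mathbb P^\treegraph \big[ A_{\rot}^{\treegraph, h-\varepsilon, \gamma} \cap A_{z_{x, x'}}^{\treegraph, h-\varepsilon, \gamma} \big] \leq \mathbb P^\treegraph \big[ A_{\rot}^{\treegraph, h-\varepsilon, \gamma} \big]^2 + (\text{decoupling error}),
\end{equation*}
where the two marginals coincide by translation invariance of $\varphi_\treegraph$ on $\treegraph$ under rerooting. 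The decoupling error has exponential decay that can be absorbed into $c\exp(-c'\varepsilon^2 N_n^{c_0/18})$ for suitable constants, completing the bound~\eqref{238}.

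The main obstacle is the inclusion step, where the a priori global character of $A_x^{\finitegraph, h, \gamma}$ must be converted into a property of $\varphi_\treegraph$ on the local ball $B_\treegraph(\rot, r_n)$. The delicate case is when a $\finitegraph$-path connecting $x$ to some $y \in S_\finitegraph^+(x, r_n)$ exits $B_\finitegraph(x, 2R_n)$ and re-enters in the same tree branch as $y$, which could in principle certify $y \in \mathcal C_x^{\finitegraph, h}$ without placing $\Psi_\finitegraph \geq h$ on the geodesic from $x$ to $y$; ruling this out, or dominating such bypass events by a further negligible error, is where the generous ratio $R_n/r_n = 3$ is essential.
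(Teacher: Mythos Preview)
Your overall strategy matches the paper's: couple via Lemma~\ref{845}, transfer to the tree, then decouple using \cite{13}, Corollary~1.3. There is, however, a genuine gap in your third step. The decoupling inequality of \cite{13} does \emph{not} yield a product of the two events at the \emph{same} level: it requires a sprinkling parameter $\delta>0$, and its output has one factor at the original level and the other shifted down by $\delta$, plus an error term that is small only when $\delta$ is bounded away from zero. So from $\mathbb P^\treegraph\big[A_{\rot}^{\treegraph,h-\varepsilon,\gamma}\cap A_{z_{x,x'}}^{\treegraph,h-\varepsilon,\gamma}\big]$ you can only reach $\mathbb P^\treegraph\big[A_{\rot}^{\treegraph,h-\varepsilon,\gamma}\big]\,\mathbb P^\treegraph\big[A_{\rot}^{\treegraph,h-\varepsilon-\delta,\gamma}\big]$, which overshoots the target level. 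The paper avoids this by splitting the budget: it couples with $\varepsilon/2$ (arriving at level $h-\tfrac{\varepsilon}{2}$ on the tree), then decouples with $\delta=\tfrac{\varepsilon}{2}$ so that one factor stays at $h-\tfrac{\varepsilon}{2}$ and the other drops to $h-\varepsilon$, bounds the decoupling error explicitly via a single-vertex Gaussian tail (on the tree the walk from $K_2$ can only enter $K_1$ through one boundary point), and finally uses $A_{\rot}^{\treegraph,h-\varepsilon/2,\gamma}\subseteq A_{\rot}^{\treegraph,h-\varepsilon,\gamma}$ to put both factors at $h-\varepsilon$.

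On the bypass issue you raise in your last paragraph: you are right that the inclusion $A_x^{\finitegraph,h,\gamma}\cap D_\varepsilon^{\mathsf c}\subseteq A_{\rot}^{\treegraph,h-\varepsilon,\gamma}$ is not entirely obvious with the global definition of $\mathcal C_x^{\finitegraph,h}$, and the paper asserts the corresponding step in~\eqref{233} without comment. Your sketch does not resolve it either. A clean fix is to work throughout Section~\ref{supercriticalsection} with the \emph{local} version of $A_x^{\finitegraph,h,\gamma}$ (replacing $\mathcal C_x^{\finitegraph,h}$ by the component of $\{\Psi_\finitegraph\ge h\}\cap B_\finitegraph(x,r_n)$ containing $x$): then the inclusion is immediate on $D_\varepsilon^{\mathsf c}$, the bounds \eqref{224} and \eqref{91} go through verbatim, and the proof of Theorem~\ref{manymesoscopiccomponents} is unaffected since the local event is still contained in $\{|\mathcal C_x^{\finitegraph,h}|\ge N_n^{c_h}\}$.
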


\begin{proof}
  Let us abbreviate
  $V\coloneqq B_\finitegraph(x,r_n) \cup B_\finitegraph(x',r_n)$. For
  $x,x'\in \finitegraph$ as in the assumptions we can apply Lemma
  \ref{845} and  obtain that for all $n\geq1$, $\gamma>0$ and
  $\varepsilon>0$ one has (recall the notation $\rho_{x,x',2R_n}$ and
    $z_{x,x'}$ from the beginning of Section \ref{subsectionapproximation})
  \begin{align}
    & \mathbb P^\finitegraph  \big[A_{x}^{\finitegraph,h,\gamma} \, , \,
      A_{x'}^{\finitegraph,h,\gamma} \big]   \nonumber                         \\
    &   \overset{\eqref{215}}{\leq}   \mathbb Q_n \Big[
      A_{x}^{\finitegraph,h,\gamma} , A_{x'}^{\finitegraph,h,\gamma} \, , \, \sup_{y
        \in V} \big| \Psi_\finitegraph(y) - \varphi_\treegraph(\rho_{x,x',2R_n}(y))
      \big|  \leq  \frac{\varepsilon}2 \Big]  + c \exp \big(-c' \varepsilon^2
      N_n^{\frac{c_0}{18}} \big) \nonumber                                       \\
    & \overset{\phantom{\eqref{215}}}{\leq}\mathbb P^\treegraph
    \big[A_{\rot}^{\treegraph,h-\frac{\varepsilon}2,\gamma} \, , \,
      A_{z_{x,x'}}^{\treegraph,h-\frac{\varepsilon}2,\gamma} \big] + c \exp \big(-c'
      \varepsilon^2 N_n^{\frac{c_0}{18}} \big).
    \label{233}
  \end{align}
  To further bound the probability on the right hand side of \eqref{233}
  we apply the decoupling inequality \cite{13}, Corollary 1.3, with
  \begin{equation*}
    \begin{split}
      &\delta \coloneqq \frac{\varepsilon}2, \, K_1 \coloneqq
      B_\treegraph(\rot,r_n), \, K_2 \coloneqq B_\treegraph(z_{x,x'},r_n)
      \text{ and } f_1,f_2:\mathbb R^\treegraph \to [0,1] \text{ such that } \\
      &f_1\big((\varphi_\treegraph(x))_{x\in \treegraph}\big) =
      \bbone_{A_{\rot}^{\treegraph,h-\frac{\varepsilon}2,\gamma}} \text{
        and } f_2\big((\varphi_\treegraph(x))_{x\in \treegraph}\big) =
      \bbone_{A_{z_{x,x'}}^{\treegraph,h-\frac{\varepsilon}2,\gamma}}
    \end{split}
  \end{equation*}
  (the decoupling inequality \cite{13}, Corollary 1.3, is stated for the
    Gaussian free field on $\mathbb Z^d$ but its proof directly applies
    also for the Gaussian free field $\varphi_\treegraph$ on $\treegraph$).
  We obtain that for all $n\geq 1$, $\gamma>0$ and $\varepsilon>0$
  \begin{align}
    & \mathbb P^\treegraph
    \big[A_{\rot}^{\treegraph,h-\frac{\varepsilon}2,\gamma} \, , \,
      A_{z_{x,x'}}^{\treegraph,h-\frac{\varepsilon}2,\gamma} \big]
    \label{234}
    \\
    & \leq \mathbb P^\treegraph
    \big[A_{\rot}^{\treegraph,h-\frac{\varepsilon}2,\gamma} \big]   \mathbb
    P^\treegraph \big[ A_{z_{x,x'}}^{\treegraph,h-\varepsilon,\gamma} \big]  + 2 \,
    \mathbb P^\treegraph \Big[  \sup_{y \in K_2 }  \Big| E_y^\treegraph \big[
        \varphi_\treegraph(X_{H_{K_1}})  \bbone_{\{H_{K_1} < \infty\}}   \big]\Big|
      > \frac{\varepsilon}{4}   \Big].    \nonumber
  \end{align}
  Note that, since we are on a tree and $K_1$ and $K_2$ are two disjoint
  connected sets, there is a unique pair of vertices $z_1\in K_1$,
  $z_2 \in K_2$ with
  $d_\treegraph(K_1,K_2)
  \coloneqq \inf_{z\in K_1,z' \in K_2} d_\treegraph(z,z') = d_\treegraph(z_1,z_2)$.
  Moreover, on the event $\{H_{K_1} < \infty\}$ one $P_y^\treegraph$-almost
  surely has $\varphi_\treegraph(X_{H_{K_1}}) = \varphi_\treegraph(z_1)$
  for $y \in K_2$. Therefore,
  \begin{align}
    & \mathbb P^\treegraph \Big[  \sup_{y \in K_2 } \Big| E_y^\treegraph \big[
        \varphi_\treegraph(X_{H_{K_1}})  \bbone_{\{H_{K_1} < \infty\}}   \big]\Big|
      > \frac{\varepsilon}{4}   \Big] = \mathbb P^\treegraph \Big[  \sup_{y \in K_2
      }  \Big|
      P_y^\treegraph \big[ H_{z_1} < \infty \big]  \varphi_\treegraph(z_1) \Big|  >
      \frac{\varepsilon}{4} \Big]  \nonumber                                     \\
    & \overset{\phantom{\eqref{1.1}}}{\leq}   \mathbb P^\treegraph \Big[  |
      \varphi_\treegraph(z_1) |  > \frac{\varepsilon}{4} P_{z_2}^\treegraph \big[
        H_{z_1} < \infty \big] ^{-1}   \Big]  \overset{(*)}{\leq} 2 \exp \bigg(-
      \frac{(\varepsilon/4)^2}{2P_{z_2}^\treegraph \big[ H_{z_1} < \infty \big]^2
        g_\treegraph(\rot,\rot)}   \bigg) \nonumber                  \\
    & \stackrel[\eqref{1.1}]{(**)}{\leq} 2 \exp \Big(- c \, \varepsilon^2
      (d-1)^{2d_\treegraph(z_1,z_2)}  \Big) ,
    \label{235}
  \end{align}
  where in $(*)$ we use the exponential Markov inequality for the centred
  Gaussian random variable $\varphi_\treegraph(z_1)$ and in $(**)$ we use
  that
  $P_{z_2}^\treegraph \big[ H_{z_1} < \infty \big]
  = (\frac{1}{d-1})^{d_\treegraph(z_1,z_2)}$
  (see e.g.~\cite{33}, proof of Lemma 1.24). Since
  $K_1 \subseteq B_\treegraph(\rot,2R_n)$,
  $K_2 \subseteq B_\treegraph(z_{x,x'},2R_n)$ and
  $B_\treegraph(\rot,2R_n) \cap B_\treegraph(z_{x,x'},2R_n) =\emptyset$
  by assumption, one has the estimate
  $d_\treegraph(z_1,z_2)
  = d_\treegraph(K_1,K_2) > 2(2R_n - r_n) \geq  \frac{5 c_0}{9}\log_{d-1}(N_n)-4$
  for $n$ large enough. Hence
  $\exp \big(-c \, \varepsilon^2 (d-1)^{2d_\treegraph(z_1,z_2)}\big)
  \leq c \exp \big(- c' \varepsilon^2 N_n^{\frac{10 c_0}{9}}  \big)$.
  Therefore we can combine \eqref{233}, \eqref{234} and \eqref{235} to
  obtain that for all $n\geq 1$, $\gamma>0$ and $\varepsilon>0$ (using
    also the symmetry of $\treegraph$)
  \begin{equation*}
    \begin{split}
      &\mathbb P^\finitegraph  \big[A_{x}^{\finitegraph,h,\gamma} \, , \,
        A_{x'}^{\finitegraph,h,\gamma} \big]
      \leq \mathbb P^\treegraph
      \big[A_{\rot}^{\treegraph,h-\frac{\varepsilon}2,\gamma} \big]   \mathbb
      P^\treegraph \big[ A_{\rot}^{\treegraph,h-\varepsilon,\gamma} \big]  +
      c\exp \big(- c' \varepsilon^2  N_n^{\frac{c_0}{18}} \big).
    \end{split}
  \end{equation*}
  This concludes the proof of \eqref{238} since by \eqref{232} it holds
  $A_{\rot}^{\treegraph,h-\frac{\varepsilon}2,\gamma} \subseteq
  A_{\rot}^{\treegraph,h-\varepsilon,\gamma}$.
\end{proof}

We are now ready to for the variance computation. This is the last
ingredient for the proof of Theorem \ref{manymesoscopiccomponents}.
\begin{lemma}
  \label{variancecomputation}
  Let $h< h_\star$. Then for the $c_h>0$ from Lemma \ref{expectationcomputation}
  one has
  \begin{equation}
    \label{91}
    \lim_{n\to \infty} \frac1{N_n^2} \Var_{\mathbb P^\finitegraph} \Big(
      \sum_{x\in \finitegraph} \bbone_{ A_{x}^{\finitegraph,h,c_h}} \Big) = 0.
  \end{equation}
\end{lemma}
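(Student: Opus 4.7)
The plan is to expand the variance as a sum of covariances,
$$\Var_{\mathbb P^\finitegraph}\Big(\sum_{x\in\finitegraph}\bbone_{A_x^{\finitegraph,h,c_h}}\Big) = \sum_{x,x'\in\finitegraph}\Big(\mathbb P^\finitegraph[A_x^{\finitegraph,h,c_h}\cap A_{x'}^{\finitegraph,h,c_h}] - \mathbb P^\finitegraph[A_x^{\finitegraph,h,c_h}]\mathbb P^\finitegraph[A_{x'}^{\finitegraph,h,c_h}]\Big),$$
and to separate the pairs $(x,x')$ into a \emph{good} set, on which $\tx(B_\finitegraph(x,2R_n))=\tx(B_\finitegraph(x',2R_n))=0$ and $B_\finitegraph(x,2R_n)\cap B_\finitegraph(x',2R_n)=\emptyset$, and the complementary \emph{bad} set. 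By Remark \ref{842} (specifically the estimates \eqref{225} and \eqref{242}), the number of bad pairs will be at most $cN_n^{2-c'}$ for some $c'>0$, so bounding the covariance of each bad pair trivially by $1$ contributes only $o(N_n^2)$ to the variance.

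For every good pair $(x,x')$, I would use the inequality \eqref{238}, whose proof combines the coupling from Lemma \ref{845} with the decoupling inequality of \cite{13}, to obtain the upper bound
$$\mathbb P^\finitegraph[A_x^{\finitegraph,h,c_h}\cap A_{x'}^{\finitegraph,h,c_h}] \leq \mathbb P^\treegraph[A_\rot^{\treegraph,h-\varepsilon,c_h}]^2 + c\exp\big(-c'\varepsilon^2 N_n^{c_0/18}\big)$$
for any fixed $\varepsilon>0$. Simultaneously, Lemma \ref{expectationcomputation} (specifically \eqref{223}) will provide the matching lower bound $\mathbb P^\finitegraph[A_x^{\finitegraph,h,c_h}]\geq \eta^+(h+\varepsilon)-\zeta$ for any $\zeta>0$, any $0<\varepsilon<(h_\star-h)/2$, and all $n$ large enough. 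Furthermore, the trivial inclusion $A_\rot^{\treegraph,h-\varepsilon,c_h}\subseteq\{\mathcal C_\rot^{\treegraph,h-\varepsilon}\cap S_\treegraph^+(\rot,r_n)\neq \emptyset\}$, combined with the monotone convergence of the right-hand event down to $\{|\mathcal C_\rot^{\treegraph,h-\varepsilon}\cap\treegraphplus|=\infty\}$ as $r_n\to\infty$, will yield $\limsup_n\mathbb P^\treegraph[A_\rot^{\treegraph,h-\varepsilon,c_h}]\leq \eta^+(h-\varepsilon)$ by \eqref{2512}.

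Combining these three estimates with the trivial bound $N_n^2$ on the number of good pairs, I expect to obtain
$$\limsup_{n\to\infty}\frac{1}{N_n^2}\Var_{\mathbb P^\finitegraph}\Big(\sum_{x\in\finitegraph}\bbone_{A_x^{\finitegraph,h,c_h}}\Big)\leq \eta^+(h-\varepsilon)^2 - \big(\eta^+(h+\varepsilon)-\zeta\big)^2.$$
Letting $\zeta\downarrow 0$ and then $\varepsilon\downarrow 0$, the right-hand side will vanish by the continuity of $\eta^+$ on $(-\infty,h_\star)$ asserted in \eqref{2512}; since the variance is nonnegative, this will establish \eqref{91}. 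The main obstacle should be purely technical bookkeeping---handling the exponentially small error terms from \eqref{238} and checking that the upper bound on the joint probability and the lower bound on the product of marginals are expressed in a compatible form that telescopes in the double limit $\zeta,\varepsilon\downarrow 0$.
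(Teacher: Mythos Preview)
Your proposal is correct and follows essentially the same route as the paper: expand the variance as a double sum, discard the $o(N_n^2)$ bad pairs via Remark~\ref{842}, control the good pairs by \eqref{238}, and close the gap by continuity of $\eta^+$ from \eqref{2512}. The only cosmetic difference is that the paper uses \eqref{224} directly for the lower bound on the squared expectation (so only a single limit $\varepsilon\downarrow 0$ is needed), whereas you go through \eqref{223} with both $\varepsilon$ and $\zeta$; and you spell out the monotone-convergence argument for $\limsup_n \mathbb P^\treegraph[A_\rot^{\treegraph,h-\varepsilon,c_h}]\le \eta^+(h-\varepsilon)$, which the paper leaves implicit in its citation of \eqref{2512}.
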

\begin{proof}
  By expanding the variance one finds that for all $\gamma>0$
  \begin{align}
    \Var_{\mathbb P^\finitegraph}  \Big( \sum_{x\in \finitegraph}
      \bbone_{ A_{x}^{\finitegraph,h,\gamma}} \Big)
    & =  \sum_{x,x' \in \finitegraph} \Big( \mathbb P^\finitegraph
      \big[A_{x}^{\finitegraph,h,\gamma},  A_{x'}^{\finitegraph,h,\gamma} \big] -
      \mathbb P^\finitegraph \big[A_{x}^{\finitegraph,h,\gamma} \big] \mathbb
      P^\finitegraph \big[A_{x'}^{\finitegraph,h,\gamma} \big]  \Big) \nonumber      \\
    & = \sum_{x,x' \in \finitegraph}  \mathbb P^\finitegraph
    \big[A_{x}^{\finitegraph,h,\gamma},  A_{x'}^{\finitegraph,h,\gamma} \big] -
    \mathbb E^\finitegraph \bigg[ \sum_{x\in \finitegraph}
      \bbone_{A_{x}^{\finitegraph,h,\gamma}}  \bigg]^2  .
    \label{241}
  \end{align}
  We define $W \subseteq \finitegraph \times \finitegraph$ to be the set
  of pairs $(x,x') \in \finitegraph\times \finitegraph$ with
  $\tx(B_\finitegraph(x,2R_n))=0$, $\tx(B_\finitegraph(x',2R_n))=0$ and
  $B_\finitegraph(x,2R_n) \cap B_\finitegraph(x',2R_n) = \emptyset$. For
  $x,x' \in \finitegraph$ such that $(x,x') \notin W$ we can bound the
  probability on the right hand side of \eqref{241} by one. This will be
  good enough since for $n$ large enough
  $|(\finitegraph \times \finitegraph) \setminus W |
  \leq 2 N_n \cdot (d-1)N_n^{1-\frac{2\alpha}{3}} + dN_n^{\frac{5}{3}}
  \leq dN_n\big(2N_n^{1-\frac{2\alpha}{3}} + N_n^{\frac{2}{3}}\big)$
  by \eqref{225} and \eqref{242}.  For $x,x' \in \finitegraph$ such that
  $(x,x') \in W$ we use \eqref{238} instead. There are at most $N_n^2$
  such pairs. Thus we obtain for all $n\geq 1$, $\gamma>0$ and
  $\varepsilon>0$
  \begin{align}
    & \frac1{N_n^2} \Var_{\mathbb P^\finitegraph} \Big(  \sum_{x\in
        \finitegraph} \bbone_{A_{x}^{\finitegraph,h,\gamma}} \Big)
    \label{244} \\
    & \leq\mathbb P^\treegraph \big[
      A_{\rot}^{\treegraph,h-\varepsilon,\gamma} \big]^2  + c \exp
    \big(-c'\varepsilon^2 N_n^{\frac{c_0}{18}} \big)     +
    d\big(2N_n^{-\frac{2\alpha}{3}} + N_n^{-\frac{1}{3}}\big)   - \frac{1}{N_n^2}
    \mathbb E^\finitegraph \bigg[ \sum_{x\in \finitegraph}
      \bbone_{A_{x}^{\finitegraph,h,\gamma}}  \bigg]^2. \nonumber
  \end{align}
  Now we apply \eqref{244} to $\gamma \coloneqq c_{h}>0$ for the $c_h$ from Lemma
  \ref{expectationcomputation} and deduce that for all $0<\varepsilon <
  \frac{h_\star - h}2$
  \begin{equation*}
    \begin{split}
      &\limsup_{n\to \infty} \frac1{N_n^2} \Var_{\mathbb P^\finitegraph}
      \Big(  \sum_{x\in \finitegraph} \bbone_{A_{x}^{\finitegraph,h,c_h}} \Big)
      \\
      &\qquad \stackrel[]{\eqref{244}}{\leq} \limsup_{n\to \infty}  \mathbb
      P^\treegraph \big[ A_{\rot}^{\treegraph,h-\varepsilon,c_h} \big]^2  -
      \liminf_{n\to \infty} \frac{1}{N_n^2} \mathbb E^\finitegraph \bigg[
        \sum_{x\in \finitegraph} \bbone_{A_{x}^{\finitegraph,h,c_h}}  \bigg]^2  \\
      &\qquad \stackrel[\eqref{224}]{\eqref{2512}}{\leq}  \eta^+(h-\varepsilon)^2 -
      \eta^+(h)^2.
    \end{split}
  \end{equation*}
  The statement follows by letting   $\varepsilon$ tend to zero and applying
  \eqref{2512}.
\end{proof}

\begin{proof}[Proof of Theorem \ref{manymesoscopiccomponents}]
  We will show that the probability of the complementary event tends to
  zero. For $n\geq 1$ let us define
  $W_n^{\geq h} \coloneqq  \sum_{x\in \finitegraph} \bbone_{A_{x}^{\finitegraph,h,c_h}}$
  with $c_h>0$  as in Lemma~\ref{expectationcomputation}. Then we can
  estimate
  \begin{equation*}
    \begin{split}
      &\mathbb P^\finitegraph \Big[ \sum_{x\in \finitegraph} \bbone_{ \big\{
            |\mathcal{C}_x^{\finitegraph,h} | \geq N_n^{c_h} \big\} }  <
        \frac{\eta^+(h)}{2} \, N_n  \Big]
      \leq \mathbb P^\finitegraph \Big[ W_n^{\geq h}  < \frac{\eta^+(h)}{2} \, N_n
        \Big] \\
      &\qquad \qquad \qquad =  \mathbb P^\finitegraph \Big[  \frac{1}{N_n}\mathbb
        E^\finitegraph[W_n^{\geq h}] - \frac{1}{N_n} W_n^{\geq h}   >
        \frac{1}{N_n}\mathbb E^\finitegraph[W_n^{\geq h}]  - \frac{\eta^+(h)}{2}
        \Big]
    \end{split}
  \end{equation*}
  and therefore
  \begin{equation*}
    \begin{split}
      \limsup_{n\to\infty}  \mathbb P^\finitegraph \Big[ \sum_{x\in \finitegraph}
        &\bbone_{ \big\{ |\mathcal{C}_x^{\finitegraph,h} | \geq N_n^{c_h} \big\} }
        < \frac{\eta^+(h)}{2} \, N_n  \Big]   \\
      &\overset{\eqref{224}}{\leq}  \limsup_{n\to\infty} \mathbb P^\finitegraph \Big[
        \frac{1}{N_n}\mathbb E^\finitegraph[W_n^{\geq h}] -  \frac{1}{N_n} W_n^{\geq
          h}   > \frac{\eta^+(h)}{2}   \Big] \\
      &\overset{\ \, (*) \, \ }{\leq}  \limsup_{n\to\infty} \frac{4}{\eta^+(h)^2}
      \Var_{\mathbb P^\finitegraph} \Big( \frac{1}{N_n} W_n^{\geq h}\Big)
      \overset{\eqref{91}}{=} 0,
    \end{split}
  \end{equation*}
  where in $(*)$ we use Chebyshev's inequality. This concludes the proof of
  Theorem \ref{manymesoscopiccomponents}.
\end{proof}

\begin{remark}
  \label{openquestions}
  It remains open whether in the supercritical phase $h<h_\star$, with
  high probability for large $n$, there actually is a macroscopic (giant)
  connected component of the level set above level $h$ (i.e.~containing a
    number of vertices comparable to $\finitegraph$), and whether this
  giant component is unique (meaning the size of the second-largest
    connected component is negligible compared to $\finitegraph$). For
  other probabilistic models on essentially the same class of graphs this
  has been shown. One  example is the emergence of a unique giant
  connected component for Bernoulli bond percolation on $d$-regular
  expanders of large girth  (see \cite{38} and also \cite{46}). A second
  example is the emergence of a unique giant connected component in the
  vacant set of simple random walk on the same graphs
  $(\finitegraph)_{n\geq 1}$ as considered here (see \cite{21}). As
  briefly mentioned in the introduction below \eqref{0.7}, such results
  are typically obtained by a sprinkling argument out of an intermediary
  result like Theorem \ref{manymesoscopiccomponents}. In our setting, the
  zero-average property of $\Psi_\finitegraph$ (see below \eqref{1.7})
  prevents us from easily implementing such a strategy. In particular,
  due to the zero-average property, the field $\Psi_\finitegraph$ neither
  satisfies an FKG-inequality nor does it possess the domain Markov
  property of the Gaussian free field $\varphi_\treegraph$ (compare
    \eqref{460} with \eqref{1.18}). In contrast, the sprinkling argument
  in \cite{DreRod} for constructing an infinite connected component for
  the Gaussian free field on $\mathbb Z^d$ for high-dimension $d$
  crucially relies on the domain Markov property of the Gaussian free
  field on $\mathbb Z^d$ for $d\geq 3$. \qed
\end{remark}



\bibliographystyle{alpha}
\bibliography{bibliographyfile}

\end{document}